\newtheorem*{exm1A}{Example 1.A}
\newtheorem*{exm1B}{Example 1.B}
\newtheorem*{exm1C}{Example 1.C}
\newtheorem*{exm1D}{Example 1.D}
\newtheorem*{ques1A}{Question 1.A}
\newtheorem*{ques1B}{Question 1.B}
\newtheorem*{theo1A}{Theorem 1.A}
\newtheorem*{theo1B}{Theorem 1.B}
\newtheorem*{theo1C}{Theorem 1.C}
\newtheorem*{theo1D}{Theorem 1.D}
\newtheorem*{theo1E}{Theorem 1.E}
\newtheorem*{rem1A}{Remark 1.A}
\newtheorem{theo}{Theorem}[section]
\newtheorem{lem}{Lemma}[section]
\newtheorem{exm}{Example}[section]
\newtheorem{rem}{Remark}[section]
\newcommand{\ol}{\overline}
\newcommand{\be}{\begin{equation}}
\newcommand{\ee}{\end{equation}}
\newcommand{\beas}{\begin{eqnarray*}}
\newcommand{\eeas}{\end{eqnarray*}}
\newcommand{\bea}{\begin{eqnarray}}
\newcommand{\eea}{\end{eqnarray}}
\newcommand{\lra}{\longrightarrow}
\numberwithin{equation}{section}
\numberwithin{equation}{section}
\begin{document}
\title[R\MakeLowercase {elationship between the value-sharing behavior of an entire function}.....]{\LARGE R\Large\MakeLowercase {elationship between the value-sharing behavior of an entire function and its derivative, and the analytic structure of a nonlinear differential equation}}
\date{}
\author[J. F. X\MakeLowercase{u}, S. M\MakeLowercase{ajumder} \MakeLowercase{and} L. M\MakeLowercase{ahato}]{J\MakeLowercase{unfeng} X\MakeLowercase{u}, S\MakeLowercase{ujoy} M\MakeLowercase{ajumder}$^{*}$ \MakeLowercase{and} L\MakeLowercase{ata} M\MakeLowercase{ahato}}

\address{Department of Mathematics, Wuyi University, Jiangmen 529020, Guangdong, People's Republic of China.}
\email{xujunf@gmail.com}

\address{Department of Mathematics, Raiganj University, Raiganj, West Bengal-733134, India.}
\email{sm05math@gmail.com}

\address{Department of Mathematics, Mahadevananda Mahavidyalaya, Monirampore Barrackpore, West Bengal-700120, India.}
\email{lata27math@gmail.com}

\renewcommand{\thefootnote}{}
\footnote{2010 \emph{Mathematics Subject Classification}: 30D35 and 30D45}
\footnote{\emph{Key words and phrases}: Entire functions, Nevanlinna theory, uniqueness, normal family, differential equation.}
\footnote{*\emph{Corresponding Author}: Sujoy Majumder.}
\renewcommand{\thefootnote}{\arabic{footnote}}
\setcounter{footnote}{0}

\begin{abstract} In this paper, we study uniqueness problems for entire functions that partially share two values with their higher-order derivatives. The results obtained here both improve and generalize the related results of Li and Yi \cite{LYi}, L\"{u} et al. \cite{LXY1} and Sauer and Schweizer \cite{SS1}. Furthermore, we show that our results reveal a deep relationship between the value-sharing behavior of an entire function $f$ and its $k$-th derivative $f^{(k)}$, and the analytic structure of a particular type of nonlinear differential equation. Several examples are provided to illustrate the necessity of the conditions used in our results.
\end{abstract}
\thanks{Typeset by \AmS -\LaTeX}
\maketitle

\section{{\bf Introduction}}
The study of entire functions that share values with their derivatives is a classical and active area in complex analysis, especially within Nevanlinna theory. When an entire function shares two values partially with its general derivatives, the behavior of the function becomes highly constrained. These constraints often lead to strong uniqueness theorems or explicit characterizations of the entire function.

Entire functions that share values with their derivatives arise naturally in the study of differential equations, normal families, and uniqueness theory. Investigating partial sharing (e.g., IM-sharing, CM-sharing, or sharing in the sense of reduced multiplicity) helps broaden classical uniqueness results while capturing more delicate analytical behavior.

Results of this type are especially useful for:\\
$\bullet$ establishing the solvability and uniqueness of nonlinear differential equations,\\
$\bullet$ understanding fixed points and periodic points of differential operators,\\
$\bullet$ studying functional equations involving $f$ and $f^{(k)}$,\\
$\bullet$ proving normality criteria for families of meromorphic functions.

\medskip
Let $f(z)$ and $g(z)$ be two non-constant meromorphic functions and $a$ be a finite complex number. If $g(z)-a=0$ whenever $f(z)-a=0$, we write $f(z)=a\Rightarrow g(z)=a$. If $f(z)=a\Rightarrow g(z)=a$ and $g(z)=a\Rightarrow f(z)=a$, we then write $f(z)=a\Leftrightarrow g(z)=a$ and we say that $f(z)$ and $g(z)$ share $a$ IM. If $f(z)-a$ and $g(z)-a$ have the same zeros with the same multiplicities, we write $f(z)=a\rightleftharpoons g(z)=a$ and we say that $f(z)$ and $g(z)$ share $a$ CM.\par

\medskip
In 1977, Rubel and Yang \cite{RY} were the first to investigate the uniqueness problem for an entire function $f(z)$ that shares two values with its first derivative $f^{(1)}(z)$. They established the following result.

\begin{theo1A}\cite{RY} Let $f(z)$ be a non-constant entire function and let $a$ and $b$ be two distinct finite complex numbers. If $f(z)=a\rightleftharpoons f^{(1)}(z)=a$ and $f(z)=b\rightleftharpoons f^{(1)}(z)=b$, then $f(z)\equiv f^{(1)}(z)$, i.e., $f(z)=A\exp(z)$, where $A\in\mathbb{C}\backslash \{0\}$.
\end{theo1A} 

\medskip
In 1979, Mues and Steinmetz \cite{Mues} further generalized Theorem 1.A by weakening the sharing condition from CM to IM, as stated in the following result.
\begin{theo1B}\cite[Satz 1]{Mues} Let $f(z)$ be a non-constant entire function and let $a$ and $b$ be two distinct finite complex numbers. If $f(z)=a\Leftrightarrow f^{(1)}(z)=a$ and $f(z)=b\Leftrightarrow f^{(1)}(z)=b$, then $f(z)\equiv f^{(1)}(z)$, i.e., $f(z)=A\exp(z)$, where $A\in\mathbb{C}\backslash \{0\}$.
\end{theo1B} 

These two results ushered in a new era in the study of uniqueness problems for entire and meromorphic functions that share two values with their derivatives, and they sparked a long-standing line of research in this field (see \cite{BM1}, \cite{LM1}-\cite{LY}, \cite{SM1}, \cite{SM2}, \cite{SMS}, \cite{MD1}, \cite{MRKS}, \cite{Mss}, \cite{MSS1}, \cite{MS1}, \cite{MSD1}, \cite{SS1}, \cite{XMD} and \cite[Chapter 8]{10}).

The following example demonstrates that the number of shared values in Theorems 1.A and 1.B cannot be reduced to one.
\begin{exm1A}\cite{B1} Let
\begin{align*}
f(z)=\exp(\exp(z))\int\limits_{0}^{z}\exp(-\exp(t))\left(1-\exp(t)\right)\;dt.
\end{align*}

It is easy to verify that 
\begin{align*}
f^{(1)}(z)-1=\exp(z)(f(z)-1)
\end{align*}
and so $f(z)=1\Leftrightarrow f^{(1)}(z)=1$, but $f(z)\not\equiv f^{(1)}(z)$.
\end{exm1A}

\smallskip
Essentially, Theorems 1.A and 1.B have been generalized in two main directions. The first direction replaces the sharing condition $f(z)=b\rightleftharpoons f^{(1)}(z)=b$ with the weaker requirement $f(z)=b\Rightarrow f^{(1)}(z)=b$
(see \cite{LYi,LXY1,LXC1,LY,Mss}). The second direction reverses the implication by replacing $f(z)=b\rightleftharpoons f^{(1)}(z)=b$
with $f^{(1)}(z)=b\Rightarrow f(z)=b.$

\smallskip
In this paper, we refer to the following result, due to Li and Yi \cite{LYi}, in which the original sharing condition $f(z)=b\rightleftharpoons f^{(1)}(z)=b$ is weakened using the concept of partial value sharing, namely $f^{(1)}(z)=b\Rightarrow f(z)=b$.

\begin{theo1C}\cite[Theorem 1]{LYi} Let $f(z)$ be a non-constant entire function and let $a$ and $b$ be two finite complex numbers such that $b\not=a,0$. If $f(z)=a\rightleftharpoons f^{(1)}(z)=a$ and $f^{(1)}(z)=b\Rightarrow f(z)=b$, then $f(z)\equiv f^{(1)}(z)$, i.e., $f(z)=A\exp(z)$, where $A\in\mathbb{C}\backslash \{0\}$.
\end{theo1C} 

\begin{rem1A} We observe that $b\not=a$ in Theorem 1.C. Therefore, if we assume $b\not=0$ in Theorem 1.A, then it is clear that Theorem 1.C generalizes Theorem 1.A through the notion of ``partial'' sharing of values.
\end{rem1A}
 
In the same paper, Li and Yi \cite{LYi} presented the following example to demonstrate the necessity of the condition ``$b\not=0$'' in Theorem 1.C.

\begin{exm1B}\cite{LYi}Let 
\begin{align*}
f(z)=c\exp\left(\frac{a}{c}z\right)+a-c,
\end{align*}
where $c$ is a non-zero constant. Note that $f^{(1)}(z)\not=0$. Clearly $f(z)=a\rightleftharpoons f^{(1)}(z)=a$ and $f^{(1)}(z)=0\Rightarrow f(z)=0$, but $f(z)\not\equiv f^{(1)}(z)$.
\end{exm1B}

\medskip
It would be a remarkable achievement to show that the conclusion of Theorem 1.C remains valid when the hypothesis $f(z)=a\rightleftharpoons f^{(1)}(z)=a$ is replaced by the weaker condition $f(z)=a\Leftrightarrow f^{(1)}(z)=a$.
In 2009, L\"{u}, Xu and Yi \cite{LXY1} accomplished this by establishing the following result. 

\begin{theo1D}\cite[Theorem 1.1]{LXY1} Let $f(z)$ be a non-constant entire function and let $a$ and $b$ be two non-zero finite complex numbers such that $b\not=a$. If $f(z)=a\Leftrightarrow f^{(1)}(z)=a$ and $f^{(1)}(z)=b\Rightarrow f(z)=b$, then $f(z)\equiv f^{(1)}(z)$, i.e., $f(z)=A\exp(z)$, where $A\in\mathbb{C}\backslash \{0\}$.
\end{theo1D}

\medskip
L\"{u} et al. \cite{LXY1} also presented the following examples to demonstrate the necessity of the condition ``$a$ and $b$ be two non-zero finite complex numbers'' in Theorem 1.D.

\begin{exm1C}\cite[Example 1]{LXY1}
Let $f(z)=\frac{b}{4}z^2$. Clearly $f(z)=0\Leftrightarrow f^{(1)}(z)=0$ and $f^{(1)}(z)=b\Rightarrow f(z)=b$, but $f(z)\not\equiv f^{(1)}(z)$.
\end{exm1C}

\begin{exm1D}\cite[Example 2]{LXY1} For the function $f(z)$ in Example 1.B, we have $f(z)=a\Leftrightarrow f^{(1)}(z)=a$ and $f^{(1)}(z)=b\Rightarrow f(z)=b$, but $f(z)\not\equiv f^{(1)}(z)$.
\end{exm1D}

At the end of their paper, L\"{u} et al. \cite{LXY1} raised the following questions for further investigation:
\begin{ques1A} What will happen if ``$b=0$'' in Theorem 1.D?
\end{ques1A}
\begin{ques1B} What will happen if the hypothesis ``$f(z)=a\Leftrightarrow f^{(1)}(z)=a$'' is replaced by ``$f(z)=a\Rightarrow  f^{(1)}(z)=a$'' in Theorem 1.D?
\end{ques1B}

\medskip
In 2024, Sauer and Schweizer \cite{SS1} considered {\bf Question 1.B} and resolved it completely using techniques from normal family theory and number theory. We recall their results below.

\begin{theo1E}\cite[Theorem 1.1]{SS1} Let $f(z)$ be a non-constant entire function and let $a$ and $b$ be two non-zero finite complex numbers such that $b\not=a$. If $f(z)=a\Rightarrow  f^{(1)}(z)=a$ and $f^{(1)}(z)=b\Rightarrow f(z)=b$, then $f(z)$ is one of the following: 
\begin{enumerate}  
\item[(1)] $f(z)=az+C$, where $C$ is any constant. These are all non-constant polynomial solutions,
\item[(2)] $f(z)\equiv f^{(1)}(z)$, i.e., $f(z)=C\exp(z)$, where $C$ is any non-zero constant,
\item[(3)] $f(z)=C\exp\left(\frac{b}{b-a}z\right)+a$, where $C$ is a non-zero constant. In this case $f(z)$ and $f^{(1)}(z)$ do even share the value $b$. Moreover, $a$ is a Picard value of $f(z)$,
\item[(4)]$f(z)=6aC \exp\left(\frac{1}{6} z\right)\left( C \exp\left(\frac{1}{6} z\right)-1\right)+a$, where $b=-\frac{a}{8}$ and $C$ is a non-zero constant.
\end{enumerate}
\end{theo1E}

\smallskip
Let $f(z)$ and $g(z)$ be two non-constant meromorphic functions and $a$ be a finite complex number. Let $z_n$ be zeros of $f(z)-a$ with multiplicity $h(n)$. If $z_n$ are also zeros of $g(z)-a$ of multiplicity $h(n)$ at least, then we write $f(z)=a\mapsto g(z)=a$.  
Clearly $f(z)=a\mapsto g(z)=a$ always implies that $f(z)=a\Rightarrow g(z)=a$, but not conversely. But if $f(z)=a\Rightarrow f^{(1)}(z)=a$ holds, then $f(z)=a\mapsto f^{(1)}(z)=a$ always holds.

Throughout the paper, we denote by $\mu(f)$, $\rho(f)$ and $\lambda(f)$ the lower order of $f$, the order of $f$ and the exponent of convergence of zeros of $f$ respectively. 
If $\mu(f)=\rho(f)$, we say that $f$ is of regular growth. Also if
\begin{align*}
\limsup\limits_{r\to \infty }\frac{\log^+ N\left(r,a;f\right)}{\log r}<\rho(f)
\end{align*}
for $\rho(f)>0$, then $a\in\mathbb{C}\cup\{\infty\}$ is said to be a Borel exceptional value of $f$. Obviously when $\rho>0$, Picard exceptional values are Borel's.

\section{{\bf Main results}}
One of the main objectives of this paper is to provide an affirmative answer to {\bf Question 1.A}. It is also natural to ask whether the conclusions of Theorems 1.C-1.E remain valid when the first derivative $f^{(1)}(z)$ in these theorems is replaced by the higher-order derivative $f^{(k)}(z)$. In this paper, we address this question by establishing the following results.

\subsection{{\bf When $f(z)=a\Leftrightarrow f^{(k)}(z)=a$ and $f^{(k)}(z)=b\Rightarrow f(z)=b$}}

\begin{theo}\label{t1.1} Let $f(z)$ be a non-constant entire function such that all the zeros of $f(z)-a$ have multiplicity at least $k$, where $k$ is a positive integer and $a$ is a non-zero constant. If $f(z)=a\mapsto f^{(k)}(z)=a$, $f^{(k)}(z)=a\Rightarrow f(z)=a$ and $f^{(k)}(z)=b\Rightarrow f(z)=b$, where $b(\neq 0,a)$ is a constant, then only one of the following cases holds: 
\begin{enumerate}
\item[(1)] $k=1$ and $f(z)=A\exp(z)$, where $A$ is a non-zero constant,
\item[(2)] $k=2$ and $f(z)=c_0\exp(z)+c_1\exp(-z)$, where $c_0$ and $c_1$ are non-zero constants such that $c_0c_1=\frac{a^2}{4}$.
\end{enumerate}
\end{theo}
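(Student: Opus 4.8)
The plan is to run the classical Nevanlinna-theoretic machinery adapted to the $k$-th derivative, and to pin down $k\le 2$ by a growth/counting argument. First I would introduce the auxiliary function
\[
\varphi=\frac{f^{(k+1)}-f^{(k)}}{f-a}\cdot\frac{f-b}{f^{(k)}-b},
\]
or a suitable variant thereof, and argue that $\varphi$ is entire: the hypothesis that every zero of $f-a$ has multiplicity $\ge k$ together with $f(z)=a\mapsto f^{(k)}(z)=a$ forces $f^{(k)}-a$ to vanish to the appropriate order at the zeros of $f-a$, so $f^{(k+1)}-f^{(k)}$ absorbs the pole coming from $f-a$; the factor $(f-b)/(f^{(k)}-b)$ is used to cancel the zeros of $f^{(k)}-b$ against the zeros of $f-b$ supplied by $f^{(k)}(z)=b\Rightarrow f(z)=b$. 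One then estimates $T(r,\varphi)$ by the lemma on the logarithmic derivative and the first fundamental theorem, concluding $T(r,\varphi)=S(r,f)$, hence $m(r,\varphi)=S(r,f)$ and $\varphi$ is a ``small function''.

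Next I would exploit the two partial-sharing implications $f^{(k)}(z)=a\Rightarrow f(z)=a$ and $f^{(k)}(z)=b\Rightarrow f(z)=b$ via a second Frobenius-type quotient, comparing the counting functions $\ol N(r,a;f)$, $\ol N(r,a;f^{(k)})$, $\ol N(r,b;f^{(k)})$, $\ol N(r,b;f)$. Using the Second Main Theorem applied to $f^{(k)}$ with the three values $a,b,\infty$ (the last being a ``Picard-type'' value since $f$, hence $f^{(k)}$, is entire), one derives that $f^{(k)}$ has essentially only the two values $a$ and $b$ as Nevanlinna-deficient targets and that all three deficiencies are maximal; this is the step that should force $f$ to be of finite order with a very rigid zero structure, namely $f-a$ and $f^{(k)}-a$ (and correspondingly the $b$-level sets) have exactly matching divisors and $a$ is a Borel exceptional value of $f$ unless $f\equiv f^{(k)}$. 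Writing $f-a=P e^{\alpha}$ with $P$ a polynomial (or a canonical product of small genus) and $\alpha$ a polynomial, and substituting into the relation $\varphi\cdot(f-a)(f^{(k)}-b)=(f^{(k+1)}-f^{(k)})(f-b)$, one reduces everything to an algebraic identity among $P$, $\alpha$ and their derivatives.

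At that point the multiplicity hypothesis (zeros of $f-a$ of order $\ge k$) interacts with $\deg\alpha$ and $\deg P$ through the Wronskian-type identity $f^{(k)}(z)=a\Rightarrow f(z)=a$: counting zeros of $f^{(k)}-a$ versus zeros of $f-a$ shows $\deg\alpha'$ divides suitable data and, crucially, that $k\le 2$ — for $k\ge 3$ the derivative $f^{(k)}$ picks up too many extra $a$-points (from differentiating $Pe^\alpha$ the requisite number of times) that cannot all be $a$-points of $f$. When $k=1$ one recovers $P$ constant and $\alpha(z)=z+c$, i.e.\ case (1), matching Theorem 1.C/1.D. When $k=2$ one finds $\alpha$ linear, say $\alpha(z)=\lambda z$, and the condition $f^{(2)}=a\Rightarrow f=a$ together with $f^{(2)}=b\Rightarrow f=b$ forces $f$ to be a linear combination $c_0e^z+c_1e^{-z}$ (the two exponents being the roots of $t^2=1$ dictated by $f^{(2)}$ sharing two finite values with $f$), and plugging back into the level-set conditions at $a$ pins the normalization $c_0c_1=a^2/4$; the value $b$ then is automatically compatible.

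\textbf{Main obstacle.} The hard part will be the passage that simultaneously controls the order of $f$ and eliminates $k\ge 3$: one must rule out, uniformly in $k$, the possibility of a high-order entire $f$ with $f-a=Pe^{\alpha}$, $\deg\alpha\ge 2$, for which the derivative conditions at $a$ and $b$ happen to hold; this requires a careful zero-counting of $f^{(k)}-a$ and $f^{(k)}-b$ against $f-a$ and $f-b$, using that $f^{(k)}$ differs from $(\text{polynomial})\cdot e^{\alpha}$ by lower-order terms, and showing the ``extra'' zeros contributed by the derivative violate one of the $\Rightarrow$ implications unless $\deg\alpha=1$ and $k\le 2$. I expect this to consume the bulk of the argument, with the final identification of $c_0e^z+c_1e^{-z}$ and the constant $c_0c_1=a^2/4$ being a short explicit computation once the structural reduction is in place.
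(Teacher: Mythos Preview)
Your auxiliary function does not do what you claim. At a zero $z_0$ of $f-a$ (which has multiplicity exactly $k$, and by $f=a\mapsto f^{(k)}=a$ is a zero of $f^{(k)}-a$ of multiplicity $\ge k$) one has $f^{(k)}(z_0)=a\neq 0$, so $\bigl(f^{(k+1)}-f^{(k)}\bigr)(z_0)=f^{(k+1)}(z_0)-a$, which is $-a\neq 0$ whenever $k\ge 2$. Hence your $\varphi$ has a genuine pole of order $k$ at every $a$-point of $f$ and is not entire. The paper's choice is
\[
\varphi=\frac{f^{(k+1)}\,\bigl(f^{(k)}-f\bigr)}{(f-a)\,(f^{(k)}-b)},
\]
where it is $f^{(k)}-f=(f^{(k)}-a)-(f-a)$ that vanishes to order $\ge k$ at the $a$-points and kills the pole from $f-a$; at the $b$-points of $f^{(k)}$ the factor $f^{(k+1)}$ contributes a zero of one lower order and $f^{(k)}-f$ an extra simple zero, so $\varphi$ is entire.

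More seriously, you have no mechanism for bounding the order of $f$, and without that the rest of your outline collapses. The logarithmic-derivative lemma gives only $T(r,\varphi)=S(r,f)$, not that $\varphi$ is constant; the Hadamard factorization $f-a=P e^{\alpha}$ with polynomial $\alpha$ already \emph{presupposes} finite order; and the Second Main Theorem applied to $f^{(k)}$ with values $a,b,\infty$ yields inequalities among counting functions, not an order bound. The paper handles this by a normal-family step: the translates $\{f(\cdot+\omega):\omega\in\mathbb{C}\}$ are shown to be normal on the unit disc via Zalcman's lemma (Lemma~\ref{L.4}), hence $f^{\#}$ is bounded on $\mathbb{C}$, hence $\rho(f)\le 1$. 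Only then does the sharp logarithmic-derivative estimate (Lemma~\ref{LL.1}) give $T(r,\varphi)=o(\log r)$, forcing $\varphi$ to be a constant. Your proposal completely misses this ingredient, and your stated ``main obstacle'' (ruling out $\deg\alpha\ge 2$) is a problem you would never actually face once $\rho(f)\le 1$ is in hand.

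Finally, the restriction $k\le 2$ does not arise where you expect. In the paper the case $\varphi\not\equiv 0$ is driven to a contradiction (using that $\varphi$ is a nonzero constant, one shows either that $a$ is a Picard value of $f$ and derives an impossibility, or that $f$ and $f^{(k)}$ share $a$ CM, invokes Lemma~\ref{LL.2} to get $f^{(k)}-a=\lambda(f-a)$, and again reaches a contradiction since $b\neq 0$). The only surviving case is $\varphi\equiv 0$, i.e.\ $f\equiv f^{(k)}$, and the dichotomy $k=1$ versus $k=2$ with $c_0c_1=a^2/4$ then comes directly from the cited structure theorem for entire solutions of $f=f^{(k)}$ whose $a$-points have multiplicity $\ge k$ (Lemma~\ref{LL.3}), not from any counting of ``extra $a$-points''.
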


\begin{theo}\label{t1.2} Let $f(z)$ be a non-constant entire function such that all the zeros of $f(z)-a$ have multiplicity at least $k$, where $k$ is a positive integer and $a$ is a non-zero constant. If $f(z)=a\rightleftharpoons f^{(k)}(z)=a$ and $f^{(k)}(z)=0\Rightarrow f(z)=0$, then only one of the following cases holds: 
\begin{enumerate}
\item[(1)] $k=1$ and $f(z)=A\exp(z)$, where $A$ is a non-zero constant,
\item[(2)] $k=2$ and $f(z)=c_0\exp(z)+c_1\exp(-z)$, where $c_0$ and $c_1$ are non-zero constants such that $c_0c_1=\frac{a^2}{4}$,
\item[(3)] $f(z)=\frac{a(\beta-1)}{\beta}+\frac{C}{\beta}\exp(\lambda z)$,
where $C$, $\beta(\neq 1)$ and $\lambda$ are non-zero constants such that $\lambda^k=\beta$.
\end{enumerate}
\end{theo}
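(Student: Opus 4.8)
The plan is to convert the two-sided CM sharing of $a$ into an exponential relation, force that exponential to be a constant, and then solve the resulting constant-coefficient differential equation under the remaining two conditions. First I would note that since $f$ is entire and $f-a$, $f^{(k)}-a$ have exactly the same zeros with the same multiplicities, a zero $z_{0}$ of $f-a$ of order $m\ge k$ would give $f^{(k)}(z_{0})=0\ne a$ as soon as $m>k$, which is impossible; hence every zero of $f-a$ has order exactly $k$, with $f^{(k)}(z_{0})=a$ there, so $\phi:=(f^{(k)}-a)/(f-a)$ is entire and zero-free and
\[ f^{(k)}-a=e^{g}(f-a)\qquad\text{for some entire }g. \]
From this I record the identity $e^{g}-1=(f^{(k)}-f)/(f-a)$, that $f/f^{(k)}$ is entire, and — using $f^{(k)}=0\Rightarrow f=0$ — that $e^{g}=1$ at every zero of $f^{(k)}$.

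The heart of the proof is to show $g$ is constant. Logarithmic differentiation yields $g'=f^{(k+1)}/(f^{(k)}-a)-f'/(f-a)$, whence $T(r,g')=m(r,g')=S(r,f)$. Since all zeros of $f-a$ have order $\ge k$ one has $\Theta(a,f)\ge 1-\tfrac1k$; feeding this, the relation, and the zero distribution of $e^{g}-1$ (forced by the identity above to lie among the zeros of $f-f^{(k)}$, hence among the zeros of $f$) into the second main theorem for $f$ and for $f^{(k)}$ with the values $0,a,\infty$, I would deduce first that $\rho(f)<\infty$, so $g$ is a polynomial, and then that $\deg g=0$ (for $k\ge 2$ the count against $\Theta(a,f)\ge1-\tfrac1k$ gives the contradiction when $\deg g\ge1$; for $k=1$ it follows at once since the first-order equation $f'-e^{g}f=a(1-e^{g})$ has only infinite-order solutions when $\deg g\ge 1$). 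The sub-case in which $f-a$ is zero-free is disposed of by Borel's theorem: writing $f-a=e^{h}$ with $h$ polynomial, the relation becomes $e^{g}+ae^{-h}=R_{k}(h)$, where $R_{k}$ is defined by $(e^{h})^{(k)}=e^{h}R_{k}(h)$; since $R_{k}(h)$ is then a polynomial, this contradicts Borel's theorem (the degenerate case $g+h\equiv\mathrm{const}$ being excluded separately). Thus $g$ is a constant and $e^{g}=\gamma\in\mathbb C\setminus\{0\}$.

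Now $f^{(k)}=\gamma f+a(1-\gamma)$ is linear with constant coefficients, so $f=a\!\left(1-\tfrac1\gamma\right)+\sum_{j=1}^{k}A_{j}e^{\omega_{j}z}$ with $\omega_{j}^{k}=\gamma$ (the constant term vanishing when $\gamma=1$). As a non-constant exponential polynomial $f-a$ has infinitely many zeros; the multiplicity hypothesis forces every $A_{j}\ne0$ and each zero of $f-a$ to have order exactly $k$, and a Vandermonde argument (using $\sum_{j}\omega_{j}^{l}=0$ for $1\le l\le k-1$) shows that at each zero $z_{0}$ all the numbers $A_{j}e^{\omega_{j}z_{0}}$ coincide. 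For $k\ge 3$ this pins $z_{0}$ to the intersection of at least two distinct real-one-dimensional cosets in $\mathbb C$, which is finite — contradiction; hence $k\in\{1,2\}$. If $k=2$, then $f^{(2)}$ has zeros, so $f^{(k)}=0\Rightarrow f=0$ forces $a(\gamma-1)/\gamma=0$, i.e. $\gamma=1$, giving $f=c_{0}e^{z}+c_{1}e^{-z}$, and the coincidence $c_{0}e^{z_{0}}=c_{1}e^{-z_{0}}=a/2$ yields $c_{0}c_{1}=a^{2}/4$: this is case (2). If $k=1$, then $f^{(1)}$ is zero-free, the second condition is vacuous, $\gamma=1$ gives $f=Ae^{z}$ (case (1)), and $\gamma\ne1$ gives $f=a(1-\tfrac1\gamma)+Ae^{\gamma z}$, which is case (3) on setting $\beta=\lambda=\gamma$ and $C=\gamma A$.

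Steps one and three are essentially formal. The genuine obstacle is the middle step — establishing $\rho(f)<\infty$ and then $\deg g=0$ — since the relation $f^{(k)}-a=e^{g}(f-a)$ by itself puts no bound on the order of $f$ (already $w^{(k)}=e^{g}w$ has infinite-order solutions when $\deg g\ge1$); one must exploit the rigidity supplied by the order-$k$ zeros of $f-a$ together with $f^{(k)}=0\Rightarrow f=0$, and calibrating the several Nevanlinna inequalities so that $a$ absorbs enough of $T(r,f)$ to close the estimate is where the care is needed.
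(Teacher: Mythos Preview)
The decisive gap is in what you call the ``heart'' of the proof. You claim that the zeros of $e^{g}-1$ ``lie among the zeros of $f-f^{(k)}$, hence among the zeros of $f$.'' The first inclusion is correct, from $e^{g}-1=(f^{(k)}-f)/(f-a)$, but the second is false: a point where $f^{(k)}(z_{0})=f(z_{0})$ need not satisfy $f(z_{0})=0$, and the hypothesis $f^{(k)}=0\Rightarrow f=0$ points the wrong way for this. With that inclusion gone, your second-main-theorem count has nothing to bite on, and you have no mechanism left to establish $\rho(f)<\infty$---which, as you yourself acknowledge, the relation $f^{(k)}-a=e^{g}(f-a)$ certainly does not give on its own. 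The subsequent ``$\deg g=0$'' argument for $k=1$ is circular for the same reason: it presupposes the finite-order conclusion you have not yet secured.

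The paper sidesteps this entirely. It first proves $\rho(f)\le 1$ by a normal-families argument: the family of translates $\{f(\omega+\cdot):\omega\in\mathbb{C}\}$ is normal on the unit disc (each member inherits the three hypotheses), so Marty's criterion bounds the spherical derivative of $f$ uniformly, and a lemma of Chang--Zalcman then gives $\rho(f)\le 1$. Once finite order is available, CM sharing of the value $a$ together with a result of L.~Z.~Yang yields $f^{(k)}-a=\beta(f-a)$ with $\beta$ a nonzero \emph{constant}, without any delicate Nevanlinna bookkeeping. From there the proof splits cleanly: for $\beta=1$ one quotes the L\"u--Yi classification of solutions of $f\equiv f^{(k)}$ whose $a$-points have multiplicity $\ge k$ (giving cases (1) and (2)); for $\beta\ne 1$ the condition $f^{(k)}=0\Rightarrow f=0$ forces $0$ to be a Picard value of $f^{(k)}$, whence $f^{(k)}=e^{Q}$ with $\deg Q=1$ and case (3) drops out. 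Your exponential-polynomial/Vandermonde endgame essentially re-proves those cited lemmas by hand, which is fine in spirit, but it cannot rescue the argument until the order bound is established by an independent route.
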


We now make the following observations regarding Theorems \ref{t1.1} and \ref{t1.2}.
\begin{enumerate}
\item[(i)] We have improved as well as generalized Theorems 1.C and 1.D by considering the general derivative $f^{(k)}$ instead of only the first derivative $f^{(1)}$. Moreover, we explore all possible forms of the function $f$.
\item[(ii)] We have completely solved {\bf Question 1.A} in Theorem \ref{t1.2} in the case where $f$ and $f^{(k)}$ share $a$ CM.
\item[(iii)] The condition that all the zeros of $f(z)-a$ have multiplicity at least $k$ is necessary in Theorem \ref{t1.1}. For example, consider
\begin{align*}
f(z)=\exp(z)+\exp(-z),
\end{align*}
where $a=4$, $b=1$ and $k=2$. Note that all the zeros of $f(z)-a$ are simple and $c_0c_1=1\neq\frac{a^2}{4}=4$. Obviously $f(z)=a\mapsto f^{(k)}(z)=a$, $f^{(k)}(z)=a\Rightarrow f(z)=a$ and $f^{(k)}(z)=b\Rightarrow f(z)=b$.
However, $f(z)$ does not satisfy any of the cases of Theorem \ref{t1.1}. This shows that the multiplicity condition cannot be omitted.
\item[(iv)] Suppose $k=1$. If $f(z)=a\mapsto f^{(k)}(z)=a$, then the function $f(z)-a$ has only simple zeros.
Hence, the sharing conditions $f(z)=a\mapsto f^{(k)}(z)=a$ and $f^{(k)}(z)=a\Rightarrow f(z)=a$ are equivalent to $f(z)=a\Leftrightarrow f^{(z)}(z)=a$. Consequently, Theorem 1.D follows as a direct corollary of Theorem \ref{t1.1}.
\end{enumerate}
\medskip

\subsection{{\bf When $f(z)=a\Rightarrow f^{(k)}(z)=a$ and $f^{(k)}(z)=b\Rightarrow f(z)=b$}}

\begin{theo}\label{t1.3} Let $f(z)$ be a non-constant entire function such that either $f(z)-a$ has no zeros or all the zeros of $f(z)-a$ have multiplicity at least $k$, where $k$ is a positive integer and $a$ is a non-zero constant. If $f(z)=a\mapsto f^{(k)}(z)=a$ and $f^{(k)}(z)=b\Rightarrow f(z)=b$, where $b(\neq 0,a)$ is a constant, then only one of the following cases holds: 
\begin{enumerate}
\item[(1)] $k=1$ and $f(z)=A\exp(z)$, where $A$ is a non-zero constant,
\item[(2)] $k=2$ and $f(z)=c_0\exp(z)+c_1\exp(-z)$, where $c_0$ and $c_1$ are non-zero constants such that $c_0c_1=\frac{a^2}{4}$,
\item[(3)] $f(z)=\frac{a}{k!}(z-z_1)^k+a$,
\item[(4)] $k=1$ and $f(z)=6aC \exp\left(\frac{1}{6} z\right)\left( C \exp\left(\frac{1}{6} z\right)-1\right)+a$, where $b=-\frac{a}{8}$ and $C$ is a non-zero constant,
\item[(5)] $f(z)=C\exp\left(\lambda z\right)+a$, where $C$ and $\lambda$ are non-zero constants such that $\lambda^k=\frac{b}{b-a}\neq 1$.
\end{enumerate}
\end{theo}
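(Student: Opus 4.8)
The plan is to organise the proof around the entire function $h:=(f^{(k)}-a)/(f-a)$. I would begin with two preliminary reductions. Since $a\neq0$, at a zero $z_{0}$ of $f-a$ of multiplicity $m\ge k$ one has $f^{(k)}(z_{0})=0$ whenever $m>k$; because $f=a\mapsto f^{(k)}=a$ forces $f^{(k)}(z_{0})=a\neq0$, every zero of $f-a$ (when such exist) has multiplicity exactly $k$, and matching Taylor coefficients gives $f-a=\tfrac{a}{k!}(z-z_{0})^{k}+O((z-z_{0})^{2k})$ near $z_{0}$; in particular $h$ is entire (trivially so when $f-a$ is zero-free). Next, if $f$ is a polynomial then necessarily $\deg f\ge k$ (otherwise $f^{(k)}\equiv0\neq a$ absorbs no zero of $f-a$), so $\deg(f^{(k)}-a)=\deg f-k$, whereas the zeros of $f-a$ — $\deg f/k$ of them, each of multiplicity $k$ — force $\deg(f^{(k)}-a)\ge\deg f$ unless $f^{(k)}\equiv a$; hence $f^{(k)}\equiv a$, and then the multiplicity hypothesis yields $f-a=\tfrac{a}{k!}(z-z_{1})^{k}$, which is case (3). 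Conversely $f^{(k)}\equiv a$ makes $f$ a polynomial. So from now on I assume $f$ transcendental and $f^{(k)}\not\equiv a$, i.e. $h\not\equiv0$.

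I would then record the identities $f^{(k)}=h(f-a)+a$ and $f^{(k)}-b=h(f-b)-(h-1)(a-b)$. With $b\neq a$ and the hypothesis $f^{(k)}=b\Rightarrow f=b$, the second identity shows that the zeros of $f^{(k)}-b$ are exactly the common zeros of $f-b$ and $h-1$; in particular $\ol{N}(r,1/(f^{(k)}-b))\le\ol{N}(r,1/(h-1))$. Writing $f^{(k)}=(f-a)^{(k)}$ and applying the lemma on the logarithmic derivative gives $m(r,h)\le m(r,1/(f-a))+S(r,f)$, hence $T(r,h)+N(r,1/(f-a))\le T(r,f)+S(r,f)$; together with $\ol{N}(r,1/(f-a))\le\tfrac1k N(r,1/(f-a))$ these inequalities are the fuel for the second main theorem applied to $f$ and to $f^{(k)}$ on the pair $\{a,b\}$.

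Now the case split on $h$. If $h\equiv1$ then $f^{(k)}=f$, so $f=\sum_{j}A_{j}e^{\omega_{j}z}$ with the $\omega_{j}$ distinct $k$-th roots of unity and all $A_{j}\neq0$; here both sharing conditions are vacuous and only the multiplicity hypothesis remains. Evaluating $f,f',\dots,f^{(k-1)}$ at a zero $z_{0}$ of $f-a$ (which exists and has multiplicity exactly $k$) and inverting the resulting Vandermonde/discrete-Fourier system pins each $A_{j}e^{\omega_{j}z_{0}}$ to a fixed value; then any two of these exponentials constrain the corresponding arithmetic progressions of zeros of $f-a$ to be parallel, which, together with the forced vanishing of $f''$ at such points, is impossible once $k\ge3$. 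Hence $k\le2$, giving case (1) (when $k=1$) and case (2) (when $k=2$), the relation $c_{0}c_{1}=a^{2}/4$ being the double-root condition that makes the zeros of $f-a$ of multiplicity $2$. If $h\equiv c$ with $c\neq0,1$, then $f^{(k)}=cf+a(1-c)$; since $f$ is non-constant, $f^{(k)}=b$ has solutions, and $f^{(k)}=b\Rightarrow f=b$ evaluated there gives $(a-b)(1-c)=0$, a contradiction. The value $c=0$ is the polynomial case already handled.

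The heart of the argument is the case $h$ non-constant. The first task is to show $\rho(f)<\infty$: the confinement of the zeros of $f^{(k)}-b$ inside those of $h-1$, fed through the estimates above, controls the zeros of $f^{(k)}$ relative to its other values, and where the Nevanlinna estimate stops short one would pass — as in Sauer and Schweizer \cite{SS1} — to a normal family of rescalings about the zeros of $f-b$ (or of $f^{(k)}-b$) and analyse the limit functions. Granting $\rho(f)<\infty$, I would split according to whether $f-a$ has zeros. If not, then $f-a=e^{g}$ with $g$ a polynomial, $f^{(k)}=P_{k}e^{g}$ for a differential polynomial $P_{k}$ in $g',\dots,g^{(k)}$, and $h-1=P_{k}-1-ae^{-g}$; imposing that each zero of $f^{(k)}-b$ is a zero of $h-1$ forces such a point to satisfy both $e^{g}=b-a$ and $P_{k}=b/(b-a)$, so if $\deg g\ge2$ the zeros of $f^{(k)}-b$ would lie among the finitely many zeros of the polynomial $P_{k}-b/(b-a)$, contradicting a growth count; hence $g$ is linear, $f=Ce^{\lambda z}+a$, and $f^{(k)}=b\Rightarrow f=b$ yields $\lambda^{k}=b/(b-a)\neq1$, i.e. case (5). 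If $f-a$ does have zeros, they are all of multiplicity exactly $k$ with the local form above, and a finite-order Hadamard factorisation of $f-a$ combined with the three identities among $f$, $f^{(k)}$ and $h$ leads, after a lengthier elimination, to $k=1$ and the Sauer–Schweizer exceptional solution $f=6aC\exp(\tfrac{z}{6})(C\exp(\tfrac{z}{6})-1)+a$ with $b=-\tfrac{a}{8}$, i.e. case (4). The step I expect to be the main obstacle is precisely establishing $\rho(f)<\infty$ here: the plain second-main-theorem bounds leave a gap because there is no a priori control on the zeros of $f-b$ that fail to be zeros of $f^{(k)}-b$, so the decisive input is either an additional auxiliary identity that annihilates those zeros or the normal-family rescaling; a secondary nuisance is bookkeeping the various ``near-miss'' subcases so that only the five listed forms survive.
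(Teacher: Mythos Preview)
Your auxiliary function $h=(f^{(k)}-a)/(f-a)$ differs from the paper's choice
\[
\varphi=\frac{f^{(k+1)}\bigl(f^{(k)}-f\bigr)}{(f-a)\bigl(f^{(k)}-b\bigr)},
\]
and this difference is exactly where your sketch loses traction. The paper obtains $\rho(f)\le 1$ at the outset, not by Nevanlinna bookkeeping but by a normal-family criterion (Lemma~\ref{L.5}): the translates $\{f(\omega+\cdot):\omega\in\mathbb{C}\}$ form a normal family on the unit disc under the stated hypotheses, so Marty's theorem bounds $f^{\#}$ uniformly and Lemma~\ref{L.3} gives $\rho(f)\le 1$. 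You flag the order bound as ``the main obstacle'' and hope to extract it from the inequalities $\ol N(r,b;f^{(k)})\le \ol N(r,1;h)$ and $T(r,h)+N(r,0;f-a)\le T(r,f)+S(r,f)$, but these do not close: there is no control on $\ol N(r,b;f)$, and the second main theorem for $f$ at $\{a,b\}$ leaves precisely that term uncovered. Moreover, $\rho(f)<\infty$ would not suffice; the paper needs $\rho(f)\le 1$ so that Lemma~\ref{LL.1} gives $m(r,f^{(k)}/(f-a))=o(\log r)$ and hence $T(r,\varphi)=o(\log r)$, forcing $\varphi$ to be a constant.

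The second gap is the branch ``$h$ non-constant and $f-a$ has zeros'', which you dispatch with ``a lengthier elimination'' leading to $k=1$. The paper's $\varphi$ resolves this in one line: once $\varphi\equiv c_1\neq 0$, equation~(\ref{sm.2}) gives $N\bigl(r,0;f^{(k+1)}\mid f^{(k)}\neq b\bigr)=0$; but for $k\ge 2$ any zero $z_0$ of $f-a$ has multiplicity exactly $k$, is a zero of $f^{(k)}-a$ of multiplicity $\ge k\ge 2$ by the $\mapsto$ hypothesis, hence a zero of $f^{(k+1)}$, while $f^{(k)}(z_0)=a\neq b$ --- a contradiction. Thus $k\ge 2$ is impossible in this branch, and $k=1$ is handed off directly to Theorem~1.E. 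Your $h$ contains neither $f^{(k+1)}$ nor $f^{(k)}-b$, so it cannot detect this obstruction; you would need to introduce a second auxiliary function encoding the $b$-sharing to finish. (A smaller point: in your $h\equiv c\neq 0,1$ case, the claim ``$f^{(k)}=b$ has solutions'' needs Picard, since $f^{(k)}=c\sum A_je^{\lambda_j z}$ could a priori omit one value; it is $b\neq 0$ together with the fact that a single exponential omits only $0$ that makes this work.)
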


We now make the following observations regarding Theorem \ref{t1.3}.
\begin{enumerate}
\item[(i)] Suppose $k=1$. If $f(z)=a\mapsto f^{(k)}(z)=a$, then the function $f(z)-a$ has only simple zeros.
Hence, the sharing condition $f(z)=a\mapsto f^{(k)}(z)=a$ is equivalent to $f(z)=a\Rightarrow f^{(z)}(z)=a$. Consequently, Theorem 1.E follows as an immediate corollary of Theorem \ref{t1.3}. Thus, we have generalized Theorem 1.E by considering the higher-order derivative 
$f^{(k)}$ instead of only the first derivative $f^{(1)}$.
\item[(ii)] For $k=1$, the assumption that ``either $f(z)-a$ has no zeros or all the zeros of $f(z)-a$ have multiplicity at least $k$'' is unnecessary.
\end{enumerate}
\smallskip

In the following two theorems, we restrict our attention to the case $b=0$.

\begin{theo}\label{t1.4} Let $f(z)$ be a non-constant entire function such that all the zeros of $f(z)-a$ have multiplicity at least $k$, where $k$ is a positive integer and $a$ is a non-zero constant and let $\lambda(f)<\rho(f)$. If $f(z)=a\Rightarrow f^{(k)}(z)=a$ and $f^{(k)}(z)=0\mapsto f(z)=0$, then $k=1$ and $f(z)=A\exp(z)$, where $A$ is a non-zero constant.
\end{theo}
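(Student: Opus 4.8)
The plan is to exploit the hypothesis $f^{(k)}=0\mapsto f=0$ to produce an entire auxiliary function $u:=f/f^{(k)}$, to show that $u$ is a small function of $f$, and then to use $f=a\Rightarrow f^{(k)}=a$ together with the Second Main Theorem to force $u\equiv1$; the conclusion will then follow by solving the equation $f\equiv f^{(k)}$ under the restriction $\lambda(f)<\rho(f)$.

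\emph{Step 1: the function $u=f/f^{(k)}$ and the estimate $T(r,u)=S(r,f)+O(r^{\lambda(f)+\varepsilon})$.} Since $\rho(f)>\lambda(f)\ge 0$, $f$ is transcendental and $f^{(k)}\not\equiv0$. The condition $f^{(k)}=0\mapsto f=0$ says exactly that the order of vanishing of $f^{(k)}$ at each point does not exceed that of $f$; hence $u:=f/f^{(k)}$ is entire and $u\not\equiv0$. Moreover a zero of $u$ can occur only at a zero of $f$, and with multiplicity at most $k$, so $N(r,0;u)\le k\,\overline N(r,0;f)\le k\,N(r,0;f)=O(r^{\lambda(f)+\varepsilon})$ for every $\varepsilon>0$. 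On the other hand $1/u=f^{(k)}/f$, so by the lemma on the logarithmic derivative $m(r,1/u)=S(r,f)$. Combining, $T(r,u)=T(r,1/u)+O(1)=m(r,1/u)+N(r,0;u)+O(1)=S(r,f)+O(r^{\lambda(f)+\varepsilon})$.

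\emph{Step 2: forcing $u\equiv1$.} Using $f=a\Rightarrow f^{(k)}=a$: if $z_0$ is a zero of $f-a$, then $f(z_0)=a\neq0$ forces $f^{(k)}(z_0)\neq0$ (since $u$ is entire), while the hypothesis gives $f^{(k)}(z_0)=a$; hence $u(z_0)=f(z_0)/f^{(k)}(z_0)=1$. Thus every zero of $f-a$ is a zero of $u-1$. Suppose $u\not\equiv1$. Then $\overline N(r,a;f)\le\overline N(r,1;u)\le T(r,u)+O(1)=S(r,f)+O(r^{\lambda(f)+\varepsilon})$, and since also $\overline N(r,0;f)=O(r^{\lambda(f)+\varepsilon})$ and $f$ is entire, the Second Main Theorem applied to the values $0,a,\infty$ gives $T(r,f)\le\overline N(r,0;f)+\overline N(r,a;f)+S(r,f)=O(r^{\lambda(f)+\varepsilon})+S(r,f)$. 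After the usual handling of the exceptional set (and using monotonicity of $T(r,f)$) this yields $\rho(f)\le\lambda(f)+\varepsilon$, and letting $\varepsilon\to0$ we get $\rho(f)\le\lambda(f)$, contradicting the hypothesis. Hence $u\equiv1$, i.e. $f\equiv f^{(k)}$.

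\emph{Step 3: analysing $f\equiv f^{(k)}$.} Then $f=\sum_{j=0}^{k-1}c_j\exp(\omega_j z)$ with $\omega_j=\exp(2\pi i j/k)$, and $\rho(f)=1$. If at least two of the $c_j$ were non-zero, $f$ would be a genuine exponential polynomial, for which it is classical that $\lambda(f)=\rho(f)=1$; this contradicts $\lambda(f)<\rho(f)$. Hence exactly one $c_j$ is non-zero, say $f=A\exp(\omega z)$ with $A\neq0$ and $\omega^k=1$. Then $f'=\omega A\exp(\omega z)$ never vanishes and equals $\omega a\neq0$ at every zero of $f-a=A\exp(\omega z)-a$, so all zeros of $f-a$ are simple; since $a\neq0$ such zeros exist, and the assumption that they have multiplicity at least $k$ forces $k=1$, whence $\omega=1$ and $f(z)=A\exp(z)$. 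The step I expect to be the main obstacle is Step 1--2: making the bound $T(r,u)=S(r,f)+O(r^{\lambda(f)+\varepsilon})$ and the ensuing Second Main Theorem application fully rigorous, in particular ensuring that the comparison with $r^{\lambda(f)+\varepsilon}$ genuinely contradicts $\lambda(f)<\rho(f)$ (this is where the precise definitions of $\rho(f)$ and $\lambda(f)$, and possibly a Hadamard factorization of $f$, are used); the exponential-polynomial fact invoked in Step 3 is standard but should be cited precisely.
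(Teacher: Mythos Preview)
Your proof is correct and takes a genuinely different, more elementary route than the paper's.

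The paper first invokes the normal-family machinery (Lemmas \ref{L.1}--\ref{L.3} and \ref{L.6}) to prove $\rho(f)\le 1$, then uses this to get $\rho(f)=1$, applies a Hadamard factorization $f=\mathscr G(z)\exp(cz)$, and shows via order comparisons that $H=f/f^{(k)}$ is small relative to $f$; the Second Main Theorem then yields a contradiction when $H$ is non-constant. When $H$ is constant it gets $f\equiv f^{(k)}$ and appeals to Lemma~\ref{LL.3} together with a deficiency computation (using Borel exceptional values) to rule out $k\ge 2$. By contrast, you bypass normal families entirely: the logarithmic-derivative lemma gives $m(r,f^{(k)}/f)=S(r,f)$ without any a priori bound on $\rho(f)$, and your estimate $N(r,0;u)\le k\,\overline N(r,0;f)=O(r^{\lambda(f)+\varepsilon})$ already suffices for the Second Main Theorem to force $u\equiv 1$. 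Your endgame is also different: you solve $f=f^{(k)}$ as a linear ODE and use the classical fact that a genuine exponential polynomial (at least two frequencies) has $\lambda(f)=\rho(f)$, whereas the paper leans on the cited Lemma~\ref{LL.3} and an explicit deficiency computation for $c_0e^z+c_1e^{-z}$. Your approach buys a substantially shorter argument with fewer external ingredients; the paper's approach buys a uniform framework (the same normal-family lemmas drive all of Theorems~\ref{t1.1}--\ref{t1.5}). The one point you should make fully precise is the citation for the exponential-polynomial fact in Step~3; for $k=2$ it is elementary, and for $k\ge 3$ a reference to P\'olya's theorem on zeros of exponential sums or to Levin's book would suffice.
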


\begin{theo}\label{t1.5}  Let $f(z)$ be a non-constant entire function such that either $f(z)$ has no zeros or all the zeros of $f(z)$ have multiplicity at least $k$, where $k$ is a positive integer and let $\lambda(f)<\rho(f)$. If $f(z)=a\Rightarrow f^{(k)}(z)=a$ and $f^{(k)}(z)=0\Rightarrow f(z)=0$, where $a$ is a non-zero constant, then $f(z)=A\exp(\lambda z)$, where $A$ and $\lambda$ are non-zero constants such that $\lambda^k=1$.
\end{theo}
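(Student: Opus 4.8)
The plan is to show that the hypotheses force $f^{(k)}\equiv f$, and then to extract the stated form from the growth gap $\lambda(f)<\rho(f)$. For the second part: if $f^{(k)}\equiv f$, then $f$ solves $f^{(k)}-f=0$, so $f(z)=\sum_{j=0}^{k-1}c_j\exp(\omega_j z)$ where $\omega_0,\dots,\omega_{k-1}$ are the $k$-th roots of unity; an exponential sum with two or more distinct frequencies has order $1$ and exponent of convergence of zeros equal to $1$ as well (Borel), so $\lambda(f)<\rho(f)$ leaves exactly one non-zero coefficient. With $A$ that coefficient and $\lambda$ the corresponding root of unity we obtain $f(z)=A\exp(\lambda z)$, $\lambda^{k}=1$ (this $f$ is zero-free, so the multiplicity hypothesis is vacuous). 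So it suffices to prove $f^{(k)}\equiv f$.

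To get that, I would introduce $W:=f/f^{(k)}$. Since $f^{(k)}(z)=0\Rightarrow f(z)=0$ and every zero of $f$ has multiplicity $m\ge k$, so that $f^{(k)}$ vanishes there to order exactly $m-k\le m$, the function $W$ is entire (and when $f$ has no zeros this is immediate). Since $f(z)=a\Rightarrow f^{(k)}(z)=a$, at every $a$-point of $f$ one has $f=f^{(k)}=a$, hence $W-1$ vanishes there. Now two opposing estimates trap $W$. First, every zero of $f-a$ has multiplicity at most $k$ (else $f^{(k)}=0\ne a$ there), and each such point is a zero of $f-f^{(k)}$; the second main theorem for $f$ with the values $0,a,\infty$ gives $T(r,f)\le\overline N(r,1/f)+\overline N(r,1/(f-a))+S(r,f)$, and since $\lambda(f)<\rho(f)$ the term $\overline N(r,1/f)=O(r^{\lambda(f)+\varepsilon})$ is negligible; hence $\overline N(r,1/(f-a))\gtrsim T(r,f)$ along a sequence realising $\rho(f)$, so the $a$-points of $f$—all of them zeros of $W-1$—have exponent of convergence $\rho(f)$, which together with the trivial bound $\rho(W)\le\rho(f)$ forces $\lambda(W-1)=\rho(W-1)=\rho(W)=\rho(f)$ whenever $W\not\equiv1$. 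Second, once $\rho(f)<\infty$ (see below), Hadamard's theorem and $\lambda(f)<\rho(f)$ give $f=g\,e^{q}$ with $q$ a polynomial of degree $n:=\rho(f)\ge1$ and $\rho(g)=\lambda(f)<n$; differentiating, $f^{(k)}=G_k\,e^{q}$ with $G_k$ a polynomial expression in $g,g',\dots$ and $q',q'',\dots$, and if $g$ is non-constant one checks $\rho(G_k)\le\lambda(f)<n$, so $W=g/G_k$ has $\rho(W)<n$, while if $g$ is constant then $W=1/P_{k}$ with $P_{k}$ a polynomial of degree $k(n-1)$ and the very fact that $W$ is entire forces $P_{k}$ to be a non-zero constant, i.e.\ $n=1$ and $W$ constant, again of order below $\rho(f)$. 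In all cases $\rho(W)<\rho(f)$, which contradicts the first estimate unless $W-1\equiv0$; thus $f^{(k)}\equiv f$, and the previous paragraph finishes the proof (the relation $\lambda^{k}=1$ coming either from $f^{(k)}\equiv f$ directly or from evaluating it at the $a$-points of $f$).

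I expect the main obstacle to be the step marked "once $\rho(f)<\infty$": proving finite order from the purely one-directional (IM-type) hypotheses $f(z)=a\Rightarrow f^{(k)}(z)=a$ and $f^{(k)}(z)=0\Rightarrow f(z)=0$. I would attack it with Milloux-type inequalities, using that $\overline N(r,1/f)=O(r^{\lambda(f)+\varepsilon})$ is negligible and that the zeros of $f-a$ have bounded multiplicity, to bound $T(r,f)$ by a fixed power of $r$; alternatively this can be isolated as a preliminary lemma. Once finite order is in hand, the growth gap $\lambda(f)<\rho(f)$ carries essentially all of the remaining weight, and the final identification of $f$ is a short computation.
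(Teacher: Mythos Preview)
Your overall architecture matches the paper's: introduce $W=f/f^{(k)}$, show it is entire, and reduce to $W\equiv 1$, i.e.\ $f\equiv f^{(k)}$. Your endgame (solving $f^{(k)}=f$ as an exponential sum and using $\lambda(f)<\rho(f)$ to kill all but one term) is correct and slightly more self-contained than the paper's appeal to a cited lemma. Your treatment of the non-constant $W$ case via Hadamard factorisation $f=g\,e^{q}$, $\rho(g)=\lambda(f)<\deg q=\rho(f)$, hence $\rho(W)\le\lambda(f)<\rho(f)$, while the $a$-points of $f$ force $\lambda(W-1)\ge\rho(f)$, is a clean variant of what the paper does (the paper instead argues that $0,\infty$ are Borel exceptional, deduces regular growth $\mu(f)=\rho(f)$, and then shows $T(r,W)=S(r,f)$). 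Both routes reach the same contradiction.

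The one genuine gap is exactly the step you flag: establishing $\rho(f)<\infty$. Your proposed Milloux attack is not going to close it. Milloux- and second-main-theorem inequalities have the form $T(r,f)\le(\text{counting terms})+S(r,f)$, and the error $S(r,f)=o\bigl(T(r,f)\bigr)$ already presupposes control of $T(r,f)$; they are circular as a device for bounding the order a priori. There is also no growth information available on the right-hand side: you have only $\overline N(r,0;f)=O(r^{\lambda(f)+\varepsilon})$, and nothing bounds $\overline N(r,0;f^{(k)}-b)$ or similar terms independently of $T(r,f)$. The paper obtains not merely finite order but the sharp bound $\rho(f)\le 1$ by a normal-family argument: set $\mathcal F=\{f(\cdot+\omega):\omega\in\mathbb C\}$, prove $\mathcal F$ is normal on the unit disc via Zalcman's lemma (the hypotheses $f=a\Rightarrow f^{(k)}=a$, $f^{(k)}=0\Rightarrow f=0$, and the multiplicity condition on the zeros of $f$ survive rescaling and give the needed contradiction for the Zalcman limit), then invoke Marty's criterion to bound $f^{\#}$ uniformly, which forces $\rho(f)\le 1$. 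Once $\rho(f)\le 1$ is in hand, your Hadamard argument goes through verbatim (indeed with $n=1$). So the fix is to replace the Milloux placeholder by this Zalcman--Marty step; everything else in your proposal is sound.
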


\begin{rem} For $k=1$, the assumption that ``either $f(z)$ has no zeros or all the zeros of $f(z)$ have multiplicity at least $k$'' in Theorem \ref{t1.5} is unnecessary.
\end{rem}

\begin{rem} The examples below demonstrate that the assumptions ``all the zeros of $f(z)-a$ have multiplicity at least $k$'' and ``$\lambda(f)<\rho(f)$'' in Theorems \ref{t1.4} are indeed sharp.
\end{rem}

\begin{exm} Let
\begin{align*}
f(z)=\exp(z^2)-1.
\end{align*}
If we take $a=-1$, then $f(z)-a$ has no zeros. Clearly $f(z)=a\Rightarrow f^{(1)}(z)=a$ and $f^{(1)}(z)=0\mapsto f(z)=0$, but $f(z)$ does not satisfy the conclusion of Theorem \ref{t1.4}.
\end{exm}
\begin{exm} Let 
\begin{align*}
f(z)=\frac{d}{c}\exp(c z)+a-\frac{a}{c},
\end{align*}
where $a$, $c$ and $d$ are non-zero constants such that $c\neq 1$. Clearly $\lambda(f)=\rho(f)=1$ and $f(z)-a$ has only simple zeros. On the other hand, we observe that $f(z)=a\Rightarrow f^{(1)}(z)=a$ and $f^{(1)}(z)=0\mapsto f(z)=0$. However, $f(z)$ does not satisfy the conclusion of Theorem \ref{t1.4}.
\end{exm}

\begin{rem} The examples below demonstrate that the requirements ``either $f(z)$ has no zeros or all the zeros of $f(z)$ have multiplicity at least $k$'' and ``$\lambda(f)<\rho(f)$'' in Theorems \ref{t1.5} are indeed sharp.
\end{rem}

\begin{exm} Let 
\begin{align*}
f(z)=C\exp(\sqrt{2} z)+1,
\end{align*}
where $a=2$ and $k=2$. Clearly $\lambda(f)=\rho(f)=1$ and $f(z)$ has only simple zeros. Note that $f^{(2)}(z)$ has no zeros whereas $f(z)$ has infinitely many zeros and and hence $f^{(2)}(z)=0\Rightarrow f(z)=0$. Moreover, we observe that $f(z)=a\Rightarrow f^{(2)}(z)=a$. However $f(z)$ does not satisfy the conclusion of Theorem \ref{t1.5}.
\end{exm}

\begin{exm} Let 
\begin{align*}
f(z)=\frac{a}{k!}(z-z_1)^k+a,
\end{align*}
where $a$ and $z_1$ are constants such that $a\neq 0$. Clearly $\lambda(f)=\rho(f)$, $f(z)$ has only simple zeros and $f^{(k)}(z)=a\neq 0$. Therefore $f^{(k)}(z)$ has no zeros and hence $f^{(k)}(z)=0\Rightarrow f(z)=0$. Moreover $f(z)=a\Rightarrow f^{(k)}(z)=a$. However $f(z)$ does not satisfy the conclusion of Theorem \ref{t1.5}.
\end{exm}

\section {{\bf Auxiliary lemmas}} 

Let $f(z)$ be a meromorphic function in a domain $\Omega\subset\mathbb{C}$. Then the derivative of $f(z)$ at $z_{0}\in\Omega$ in the spherical metric, called the spherical derivative, is denoted by $f^{\#}(z_{0})$, where
\beas  f^{\#}(z_{0})=\left\{\begin{array}{clcr} & \frac{|f^{(1)}(z_{0})|}{1+|f(z_{0})|^{2}},&\;\;\;\;\;\;\;\text{if}\;\;z_{0}\in\Omega\;\text{is\;not\;a\;pole} \\   
& \lim\limits_{z\rightarrow z_{0}}\frac{|f^{(1)}(z)|}{1+|f(z)|^{2}},&\;\;\text{if}\;\;z_{0}\in\Omega\;\text{is\;a\;pole}.\end{array}\right.\eeas 

A family $\mathcal{F}$ of functions meromorphic in domain $\Omega\subset \mathbb{C}$ is said to be normal in $\Omega$ if every sequence $\{f_{n}\}_{n}\subseteq \mathcal{F}$ contains a subsequence which converges spherically uniformly on compact subsets of $\Omega$ (see \cite{JS1}).

\smallskip
It is assumed that the reader is familiar with the following well known Marty Criterion.

\begin{lem}\label{L.1}\cite{JS1} A family $\mathcal{F}$ of meromorphic functions on a domain $\Omega$ is normal if and only if for each compact subset $K\subseteq \Omega$, there exists $M\in\mathbb{R}^+$ such that $f^{\#}(z)\leq M$ $\forall$ $f\in\mathcal{F}$ and $z\in K$. 
\end{lem}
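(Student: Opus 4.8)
The statement to prove is the Marty Criterion (Lemma \ref{L.1}), so my plan is to sketch the standard proof of this classical result rather than reinvent anything.

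\medskip

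The plan is to prove both implications of the Marty Criterion using the fact that the spherical derivative $f^{\#}$ controls the spherical oscillation of $f$ along paths, together with the Arzel\`a--Ascoli theorem adapted to the spherical (chordal) metric on $\widehat{\mathbb{C}}$. First I would recall that for the chordal metric $\chi$ on the Riemann sphere and any rectifiable path $\gamma$ in $\Omega$ joining $z_1$ to $z_2$, one has $\chi(f(z_1),f(z_2))\le \int_{\gamma} f^{\#}(z)\,|dz|$; this is just the statement that $f$, viewed as a map into $(\widehat{\mathbb{C}},\chi)$, has $f^{\#}$ as the norm of its derivative.

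\medskip

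For the ``if'' direction (the bound forces normality): given a compact $K\subseteq\Omega$, choose a slightly larger compact set $K'$ with $K\subseteq \mathrm{int}(K')\subseteq K'\subseteq\Omega$ and a radius $\delta>0$ so that for each $z\in K$ the disc $\overline{D(z,\delta)}$ lies in $K'$. Using the hypothesis there is $M=M(K')$ with $f^{\#}\le M$ on $K'$ for all $f\in\mathcal{F}$. Then for $z_1,z_2\in K$ with $|z_1-z_2|<\delta$, integrating $f^{\#}$ along the segment from $z_1$ to $z_2$ gives $\chi(f(z_1),f(z_2))\le M|z_1-z_2|$, so $\mathcal{F}$ is (locally, hence on $K$) equicontinuous with respect to $\chi$. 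Since $(\widehat{\mathbb{C}},\chi)$ is a compact metric space, the Arzel\`a--Ascoli theorem yields that every sequence in $\mathcal{F}$ has a subsequence converging uniformly on $K$ in the spherical metric; covering $\Omega$ by countably many such compacta and diagonalizing gives a subsequence converging spherically uniformly on compact subsets of $\Omega$, i.e.\ $\mathcal{F}$ is normal.

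\medskip

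For the ``only if'' direction (normality forces the bound): argue by contradiction. If the bound fails on some compact $K\subseteq\Omega$, there are $f_n\in\mathcal{F}$ and $z_n\in K$ with $f_n^{\#}(z_n)\to\infty$. By normality, pass to a subsequence so that $f_n\to g$ spherically uniformly on compact subsets of $\Omega$, where $g$ is either meromorphic on $\Omega$ or identically $\infty$. The key point, which I expect to be the main technical obstacle, is that spherical uniform convergence on a neighborhood of a point implies convergence of the spherical derivatives there: concretely, on a small closed disc $\overline{D(z_0,r)}\subset\Omega$ around an accumulation point $z_0$ of $\{z_n\}$, one shows $f_n^{\#}\to g^{\#}$ uniformly on $\overline{D(z_0,r/2)}$. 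This can be done by writing $f_n^{\#}$ in terms of $f_n$ via a Cauchy-type integral formula for the spherical derivative (or, near a point where $g$ is finite, by noting $f_n$ is eventually holomorphic there with $f_n\to g$ locally uniformly, so $f_n'\to g'$ by Cauchy estimates and the denominators $1+|f_n|^2$ converge too; near a pole of $g$ one works with $1/f_n$ instead). Then $f_n^{\#}(z_n)\to g^{\#}(z_0)<\infty$, contradicting $f_n^{\#}(z_n)\to\infty$. This completes the proof.

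\medskip

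The main obstacle, as indicated, is the passage from spherical uniform convergence of $\{f_n\}$ to uniform convergence of $\{f_n^{\#}\}$ near a point; handling it cleanly requires splitting into the cases where the limit $g$ is finite at the point and where it has a pole (or is $\equiv\infty$), and in each case reducing to ordinary locally uniform convergence of holomorphic functions so that Cauchy's estimates apply. Everything else is routine: the path-integral estimate for $\chi(f(z_1),f(z_2))$ and the invocation of Arzel\`a--Ascoli over a compact target space.
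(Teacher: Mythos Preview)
Your sketch is a correct and standard proof of Marty's Criterion. Note, however, that the paper does not supply its own proof of this lemma: it is quoted as a well-known result with a citation to Schiff's \emph{Normal Families}, so there is nothing to compare against beyond observing that your argument is exactly the classical one found in that reference.
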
 

\smallskip
Now we recall the following well known Zalcman's lemma.

\begin{lem}\label{L.2}\cite{LZ1} Let $\mathcal{F}$ be a family of meromorphic functions in the unit disc $\Delta$ and $\alpha$ be a real number satisfying $-1<\alpha<1$. Then $\mathcal{F}$ is not normal at a point $z_{0}\in \Delta$, if and only if there exist
\begin{enumerate}\item[(i)] points $z_{n}\in \Delta$, $z_{n}\rightarrow z_{0}$,\item[(ii)] positive numbers $\rho_{n}$, $\rho_{n}\rightarrow 0^{+}$ and \item[(iii)] functions $f_{n}\in \mathcal{F}$,
\end{enumerate}
such that $\rho_{n}^{\alpha} f_{n}(z_{n}+\rho_{n} \zeta)\rightarrow g(\zeta)$ spherically uniformly on compact subsets of $\mathbb{C}$, where $g$ is a non-constant meromorphic function. The function $g$ may be taken to satisfy the normalisation $g^{\#}(\zeta)\leq g^{\#}(0)=1 (\zeta \in \mathbb{C})$.
\end{lem}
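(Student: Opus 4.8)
The plan is to establish the equivalence by proving its two implications separately, dealing first with the (short) converse direction. Suppose such $z_{n},\rho_{n},f_{n}$ exist but, contrary to the claim, $\mathcal F$ is normal at $z_{0}$. By Marty's criterion (Lemma \ref{L.1}) there are a neighbourhood $U$ of $z_{0}$ and a constant $C>0$ with $f^{\#}(z)\le C$ for all $f\in\mathcal F$ and $z\in U$. Writing $g_{n}(\zeta):=\rho_{n}^{\alpha}f_{n}(z_{n}+\rho_{n}\zeta)$ and using the chain rule $g_{n}'(\zeta)=\rho_{n}^{1+\alpha}f_{n}'(z_{n}+\rho_{n}\zeta)$, one gets, at points where $f_{n}$ is finite,
\[
g_{n}^{\#}(\zeta)=\frac{\rho_{n}^{1+\alpha}\,|f_{n}'(z_{n}+\rho_{n}\zeta)|}{1+\rho_{n}^{2\alpha}|f_{n}(z_{n}+\rho_{n}\zeta)|^{2}}\le C\,\rho_{n}^{1+\alpha}\,\frac{1+t}{1+\rho_{n}^{2\alpha}t},\qquad t=|f_{n}(z_{n}+\rho_{n}\zeta)|^{2},
\]
where $|f_{n}'|\le C(1+|f_{n}|^{2})$ on $U$ was used. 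A short case check on the sign of $\alpha$ gives $\tfrac{1+t}{1+\rho_{n}^{2\alpha}t}\le\max\{1,\rho_{n}^{-2\alpha}\}$ once $\rho_{n}\le 1$, hence $g_{n}^{\#}(\zeta)\le C\rho_{n}^{\,1-|\alpha|}$; since $-1<\alpha<1$ this tends to $0$ locally uniformly in $\zeta$, so $g^{\#}\equiv 0$, i.e.\ $g$ is constant---a contradiction. This proves the ``if'' part.

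For the main direction I would start with the classical case $\alpha=0$. Normalise $z_{0}=0$ and work in $\Delta$. Since $\mathcal F$ is not normal at $0$, Marty's criterion yields $f_{n}\in\mathcal F$ and $w_{n}\to 0$ with $f_{n}^{\#}(w_{n})\to\infty$. Fix $0<r<1$, let $z_{n}\in\{|z|\le r\}$ maximise $(r-|z|)f_{n}^{\#}(z)$, with maximum $M_{n}$; then $|z_{n}|<r$ and $M_{n}\ge(r-|w_{n}|)f_{n}^{\#}(w_{n})\to\infty$. Set $\rho_{n}:=1/f_{n}^{\#}(z_{n})$, which tends to $0$ since $M_{n}\le r\,f_{n}^{\#}(z_{n})$, and put $g_{n}(\zeta):=f_{n}(z_{n}+\rho_{n}\zeta)$, defined for $|\zeta|<M_{n}$. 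Maximality of $M_{n}$ gives, with $w=z_{n}+\rho_{n}\zeta$,
\[
g_{n}^{\#}(\zeta)=\frac{f_{n}^{\#}(w)}{f_{n}^{\#}(z_{n})}\le\frac{M_{n}}{(r-|w|)f_{n}^{\#}(z_{n})}=\frac{r-|z_{n}|}{r-|w|}\,,
\]
which, provided the construction is arranged so that $z_{n}\to 0$ (see the next paragraph), is $\le 2$ on any fixed disc $|\zeta|\le R$ for large $n$ and in fact tends to $1$ there. By Marty's criterion $\{g_{n}\}$ is then normal on $\mathbb C$; a subsequence converges spherically, locally uniformly, to a meromorphic $g$ on $\mathbb C$. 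As $g_{n}^{\#}(0)=1$ for all $n$, continuity of the spherical derivative under such convergence forces $g^{\#}(0)=1$, so $g$ is non-constant, and passing to the limit in the displayed estimate gives $g^{\#}(\zeta)\le g^{\#}(0)=1$.

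Two supplements finish the argument. First, to ensure $z_{n}\to z_{0}$ one carries out the construction inside the shrinking discs $\Delta(z_{0},1/m)$, in each of which $\mathcal F$ is still not normal; this produces, for every $m$, a non-constant limit $g_{m}$ with $g_{m}^{\#}\le g_{m}^{\#}(0)=1$, and since these $g_{m}$ form a normal family a diagonal choice yields a single sequence with $z_{n}\to z_{0}$, $\rho_{n}\to 0$, and $f_{n}(z_{n}+\rho_{n}\zeta)\to g$. Second, for general $\alpha\in(-1,1)$ the same scheme works with the weight-adapted extremal quantity
\[
W_{n}(w)=\frac{(r-|w|)^{1+\alpha}\,|f_{n}'(w)|}{1+(r-|w|)^{2\alpha}|f_{n}(w)|^{2}}\,,
\]
taking $z_{n}$ to be a maximiser of $W_{n}$ on $\{|z|\le r\}$ (again $|z_{n}|<r$, and $M_{n}:=\max W_{n}\to\infty$ since $W_{n}$ is comparable to $f_{n}^{\#}$ near $w_{n}$) and choosing $\rho_{n}\in(0,r-|z_{n}|]$ with $g_{n}^{\#}(0)=1$; this is possible because $\rho\mapsto\rho^{1+\alpha}|f_{n}'(z_{n})|/(1+\rho^{2\alpha}|f_{n}(z_{n})|^{2})$ is continuous and strictly increasing (here $|\alpha|<1$ enters) and increases from $0$ to $M_{n}$. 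One then checks, once more using $|\alpha|<1$, that $g_{n}^{\#}$ remains bounded on compacta, extracts a limit by Marty's criterion, and normalises as before.

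The step I expect to be the genuine obstacle is exactly this last weight bookkeeping. Because the spherical derivative of $\rho_{n}^{\alpha}f_{n}(z_{n}+\rho_{n}\zeta)$ does not transform as a scalar multiple of $f_{n}^{\#}$, the naive choice of scaling factor controls $g_{n}^{\#}$ on only one side; reconciling an upper bound for $g_{n}^{\#}$ on compact sets with a uniform positive lower bound for $g_{n}^{\#}(0)$---so that the limit $g$ is certainly non-constant---is the delicate point, and it is precisely here that $-1<\alpha<1$ is indispensable, since for $\alpha$ outside this range the auxiliary one-variable function above loses monotonicity and (for $\alpha\le -1$) even fails to vanish at $\rho=0^{+}$, and the estimates collapse. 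By contrast, once $z_{n}\to z_{0}$ and $\rho_{n}\to 0$ are secured, the normalisation $g^{\#}(\zeta)\le g^{\#}(0)=1$ comes for free, just as in the $\alpha=0$ computation above.
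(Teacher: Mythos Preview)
The paper does not prove this lemma at all: Lemma~\ref{L.2} is quoted verbatim from Zalcman's survey \cite{LZ1} and is used as a black box, so there is no ``paper's own proof'' to compare your attempt against. Your sketch follows the standard route (Marty's criterion for the converse, the Zalcman extremal-point construction for $\alpha=0$, and the Pang--Zalcman weighted modification for general $-1<\alpha<1$), which is indeed how the result is established in the cited literature. The converse computation and the $\alpha=0$ case are essentially correct as written; your honest flagging of the weighted bookkeeping for general $\alpha$ as the delicate step is accurate, and the monotonicity argument you indicate for the choice of $\rho_n$ is the right idea, though a complete write-up would still need the explicit two-sided estimate on $g_n^{\#}(\zeta)$ in terms of $W_n$ that you only allude to.
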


The following lemma is due to Chang and Zalcman \cite{CZ}.
\begin{lem}\label{L.3}\cite[Lemma 2]{CZ} Let $f(z)$ be a non-constant meromorphic function such that $N(r,f)=O(\log r)$. If $f(z)$ has bounded spherical derivative on $\mathbb{C}$, then $\rho(f)\leq 1$.
\end{lem}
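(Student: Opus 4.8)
\emph{Proof proposal.} The plan is to translate the hypotheses into quantitative growth information and then bound the Ahlfors--Shimizu characteristic. Write $f^{\#}(z)\le M$ for all $z\in\mathbb{C}$; by Marty's criterion (Lemma \ref{L.1}) this is equivalent to the family of all translates $\{f(z+c):c\in\mathbb{C}\}$ being normal on $\mathbb{C}$. I would start from the Ahlfors--Shimizu formula
\[
T(r,f)=\frac1\pi\int_0^r\frac{dt}{t}\iint_{|z|<t}\bigl(f^{\#}(z)\bigr)^2\,dx\,dy+O(1),
\]
so that the crude bound $(f^{\#})^2\le M^2$ gives only $T(r,f)\le\tfrac12 M^2r^2+O(1)$, i.e.\ $\rho(f)\le2$. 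The whole point is therefore to upgrade the inner area integral from $O(t^2)$ to $O(t)$, and this is precisely where the pole hypothesis $N(r,f)=O(\log r)$ must enter.

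First I would dispose of the poles. From $N(r,f)=O(\log r)$ one gets $n(r,f)=O(\log r)$, so the poles $\{b_j\}$ of $f$ have exponent of convergence $0$; the genus-zero canonical product $P(z)=\prod_j(1-z/b_j)$ has $\rho(P)=0$, and $F:=Pf$ is entire. Since $T(r,f)\le T(r,F)+T(r,P)+O(1)$ and $\rho(P)=0$, we get $\rho(f)\le\rho(F)$, and one checks that $F$ inherits a spherical-derivative bound up to an order-zero correction coming from $P$. Thus the problem reduces to the model case: \emph{an entire function with bounded spherical derivative has order at most $1$.} For this I would aim to show $\log M(r,F)=O(r)$. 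The local mechanism is this: on any component $U$ of $\{|F|>1\}$ one has $|F'|\le 2M|F|^2$, hence $1/F$ is holomorphic and $2M$-Lipschitz on $U$; equivalently, writing $u=\log|F|$, the function $u$ is positive and harmonic on $U$, vanishes on $\partial U$, and satisfies the gradient bound $|\nabla u|\le 2M e^{u}$, i.e.\ $e^{-u}=1/|F|$ is $2M$-Lipschitz on $U$. One then has to parlay this into a Phragm\'en--Lindel\"of-type linear bound for $u$ on $U\cap\{|z|\le r\}$, and feed it back into the area integral to obtain $\iint_{|z|<t}(f^{\#})^2=O(t)$, hence $\rho(f)\le1$.

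The main obstacle is exactly this last step. The bounded-spherical-derivative hypothesis only makes each ``continent'' $U=\{|F|>1\}$ \emph{fat} on scale $1/M$ (a point where $|F|\ge2$ lies at distance $\gtrsim1/(4M)$ from $\partial U$, since $1/|F|$ jumps from $\le\tfrac12$ there to $1$ on $\partial U$ at Lipschitz rate $2M$), and fatness alone does not control how fast $u$ grows deep inside $U$; one must also use the global covering information, namely that the spherical area of $F(D_t)$ counted with multiplicity cannot be of order $t^2$ because $F$ essentially omits $\infty$ and, by $N(r,f)=O(\log r)$, attains it only $O(\log r)$ times. Making this quantitative is the substance of the Clunie--Hayman estimate. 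If one is content to invoke it, the shortest route is simply to quote the theorem of Clunie and Hayman (equivalently, Yosida's theorem on meromorphic functions of bounded spherical derivative) that an entire function with $\sup_{\mathbb{C}}f^{\#}<\infty$ has order at most $1$; combined with the reduction of poles above, this yields $\rho(f)\le1$.
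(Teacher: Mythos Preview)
The paper does not prove this lemma; it is quoted from Chang--Zalcman \cite[Lemma~2]{CZ} without argument, so there is no in-paper proof to compare against. I will therefore just assess your proposal.

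Your instinct to pass through Ahlfors--Shimizu and ultimately invoke the Clunie--Hayman theorem is correct, but the reduction step contains a genuine gap. First, a minor point: $N(r,f)=O(\log r)$ gives more than $n(r,f)=O(\log r)$; it forces $f$ to have only \emph{finitely many} poles, so $P$ is a polynomial. The serious problem is the assertion that $F=Pf$ ``inherits a spherical-derivative bound up to an order-zero correction.'' This is false. Take $f(z)=\sin z+1/z$: one checks directly that $f^{\#}$ is bounded on $\mathbb{C}$ (for $|\mathrm{Im}\,z|\ge 1$ use $|\sin z|\gtrsim\sinh|\mathrm{Im}\,z|$; for $|\mathrm{Im}\,z|<1$ everything is bounded), and $f$ has a single simple pole at $0$. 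But $F(z)=z\sin z+1$ satisfies $F(n\pi)=1$ and $F'(n\pi)=\pm n\pi$, so $F^{\#}(n\pi)=n\pi/2\to\infty$. Multiplying off the poles destroys the spherical-derivative bound, so you cannot feed $F$ into Clunie--Hayman.

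The clean argument bypasses the reduction and uses Ahlfors' covering inequality directly on $f$. With $L(r)=\int_0^{2\pi}f^{\#}(re^{i\theta})\,r\,d\theta$ the spherical length of the image of $|z|=r$, Ahlfors' first covering theorem applied to a small spherical disk $D$ about $\infty$ gives
\[
A(r,f)\;\le\; n(r,\infty;f)\;+\;h\,L(r),
\]
since every island over $D$ must contain a pole. Now $f^{\#}\le M$ gives $L(r)\le 2\pi Mr$, and $N(r,f)=O(\log r)$ gives $n(r,\infty;f)=O(1)$. Hence $A(r,f)=O(r)$, so $T_0(r,f)=\int_0^r A(t)/t\,dt=O(r)$ and $\rho(f)\le 1$. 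This is essentially how the result is obtained in \cite{CZ}, and it is exactly the ``upgrade from $O(t^2)$ to $O(t)$'' you were looking for---the pole hypothesis enters through $n(r,\infty;f)$, not through any modification of $f$.
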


\begin{lem}\label{L.4} Let $\mathcal{F}$ be a family of holomorphic functions in a domain $\Omega$, $k$ be a positive integer and let $a(\neq 0)$ and  $b$ be two finite complex numbers such that $a\neq b$. Then $\mathcal{F}$ is normal in $\Omega$, if for each $f\in\mathcal{F}$, we have 
\begin{enumerate}
\item[(1)] all the zeros of $f(z)-a$ have multiplicity at least $k$,
\item[(2)] $f(z)=a\Rightarrow f^{(k)}(z)=a$,
\item[(3)] $f^{(k)}(z)=b\Rightarrow f(z)=b$. 
\end{enumerate}
\end{lem}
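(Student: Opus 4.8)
The plan is to argue by contradiction using a Zalcman-type rescaling; I will use $b\neq 0$ throughout (for $b=0$ the family $\{a+\exp(nz)\}$ satisfies the three hypotheses yet is not normal, so $b\neq 0$ is surely intended). Since normality is local, fix a point $z_{0}\in\Omega$ at which $\mathcal{F}$ is not normal and work on a small disc about it. First pass to the shifted family $\mathcal{G}=\{f-a:f\in\mathcal{F}\}$: because $a\neq 0$, hypotheses (1) and (2) together force every zero of $f-a$ to have multiplicity \emph{exactly} $k$ (multiplicity $>k$ would give $f^{(k)}=0\neq a$ there), with leading Taylor coefficient $a/k!$, and the hypotheses become — $g\in\mathcal{G}$ is holomorphic with all zeros of multiplicity $k$, $g=0\Rightarrow g^{(k)}=a$, and $g^{(k)}=b\Rightarrow g=b-a$. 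Since all zeros have multiplicity $\geq k$, I would apply Lemma \ref{L.2} in the Zalcman--Pang form valid for such families, taking the rescaling exponent equal to $-k$; using $|g^{(k)}|=|a|$ at the zeros, one may normalise so that $G^{\#}(0)$ is a fixed positive constant depending only on $k$ and $a$. This yields $g_{n}\in\mathcal{G}$, $z_{n}\to z_{0}$, $\rho_{n}\to 0^{+}$ with $G_{n}(\zeta):=\rho_{n}^{-k}g_{n}(z_{n}+\rho_{n}\zeta)\to G(\zeta)$ locally uniformly on $\mathbb{C}$, where $G$ is non-constant and entire (a non-constant spherically locally uniform limit of holomorphic functions has no poles), with $G^{\#}(\zeta)\leq G^{\#}(0)$; since $N(r,G)=0$ and $G^{\#}$ is bounded, Lemma \ref{L.3} gives $\rho(G)\leq 1$. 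The crucial feature of this exponent is that $G_{n}^{(k)}(\zeta)=g_{n}^{(k)}(z_{n}+\rho_{n}\zeta)$, so the condition on the $k$-th derivative passes to the limit undifferentiated.

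Next I would transfer the hypotheses to $G$ by Hurwitz's theorem. At each zero $\zeta_{0}$ of $G$ there are nearby zeros of $G_{n}$, at which $G_{n}^{(k)}=a$, so $G^{(k)}(\zeta_{0})=a$; since $G$ has a zero of multiplicity $\geq k$ at $\zeta_{0}$, this forces the multiplicity to be exactly $k$. If $G^{(k)}(\zeta^{*})=b$ with $G^{(k)}\not\equiv b$, then the zeros of $G_{n}^{(k)}-b$ accumulate at $\zeta^{*}$, and at each such point $G_{n}=(b-a)\rho_{n}^{-k}\to\infty$ (here $a\neq b$ is used), contradicting $G_{n}\to G$. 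Hence $G^{(k)}$ omits $b$; the alternative $G^{(k)}\equiv b$ is impossible, since it would make $G=\tfrac{b}{k!}(\zeta-\zeta_{1})^{k}$, whose zero $\zeta_{1}$ would require $G^{(k)}(\zeta_{1})=a$, forcing $a=b$.

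It then remains to show that no entire function $G$ of order $\leq 1$ can be non-constant with every zero of multiplicity exactly $k$, $G^{(k)}=a$ at those zeros, and $G^{(k)}$ omitting $b$. I would split on the nature of $G^{(k)}$. If $G^{(k)}$ is transcendental, then $G^{(k)}-b$ is a zero-free entire function of order $\leq 1$, so $G^{(k)}=b+ce^{\mu\zeta}$ with $c,\mu\neq 0$; evaluating at the zeros of $G$ shows that they all lie on the arithmetic progression $\{\zeta:e^{\mu\zeta}=(a-b)/c\}$, on which a $k$-fold antiderivative of $G^{(k)}$ coincides with a fixed polynomial of degree $k$ (using $b\neq 0$), which has at most $k$ roots; hence $G$ has only finitely many zeros, so $G=r(\zeta)e^{\sigma\zeta}$ with $r$ a polynomial and $\sigma\neq 0$, and then $G^{(k)}=\widetilde r(\zeta)e^{\sigma\zeta}$ with $\deg\widetilde r=\deg r$ — incompatible with $b+ce^{\mu\zeta}$ unless $b=0$, a contradiction. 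If instead $G^{(k)}$ is a polynomial omitting $b$, it is constant, hence $\equiv a$, so $G$ is a polynomial of degree $\leq k$ with a zero of multiplicity $k$, i.e.\ $G=\tfrac{a}{k!}(\zeta-\zeta_{1})^{k}$; but the maximal spherical derivative of such a function is strictly smaller than the normalisation value $G^{\#}(0)$ fixed above, a contradiction. Since all cases are impossible, $\mathcal{F}$ is normal.

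The step I expect to be the main obstacle is this last case analysis — disposing of the ``exceptional'' limit functions. Ruling out $b+ce^{\mu\zeta}$ as a possible $G^{(k)}$ rests on the arithmetic-progression observation together with the ensuing finiteness of the zeros of $G$, and discarding the polynomial limit $\tfrac{a}{k!}(\zeta-\zeta_{1})^{k}$ turns on the precise normalisation constant of the rescaling lemma rather than on any soft argument. By contrast, the localisation, the shift by $a$, the set-up of the rescaling, and the appeal to Lemma \ref{L.3} are all routine.
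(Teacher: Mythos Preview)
Your observation that $b\neq 0$ is required is correct: the family $\{a+e^{nz}\}$ you mention satisfies all three hypotheses when $b=0$ yet is not normal, and the paper does not exclude $b=0$.

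Your route is genuinely different from the paper's. The paper applies the ordinary Zalcman lemma (exponent $\alpha=0$) directly to $\mathcal{F}$, obtaining a non-constant entire limit $F$ of order at most~$1$; it then shows $F(\zeta)=a\Rightarrow F^{(k)}(\zeta)=0$ (because $F_n^{(k)}=\rho_n^k f_n^{(k)}\to F^{(k)}$ and $\rho_n^k a\to 0$) and $F^{(k)}(\zeta)=0\Rightarrow F(\zeta)=b$ (Hurwitz on $\rho_n^k(f_n^{(k)}-b)\to F^{(k)}$), and concludes ``since $a\neq b$, contradiction''. This is much shorter than your argument but leaves exactly the gap your case analysis is designed to close: nothing rules out $F=a+Ce^{\lambda\zeta}$, for which both implications are vacuous and no contradiction arises. (The paper handles the analogous zero-free limit in the neighbouring Lemma~\ref{L.5}, Case~1, via a spherical-derivative trick tied to the rescaling, but omits that step here.) Your Pang--Zalcman rescaling with exponent $-k$ makes $G_n^{(k)}$ the \emph{unscaled} $f_n^{(k)}$, so the limit $G^{(k)}$ genuinely omits $b$; your transcendental case then kills the zero-free possibility via $b\neq 0$, and the polynomial case is dispatched by the normalisation $G^{\#}(0)=kA+1$. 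What your approach buys is a complete argument; the price is the longer case analysis and the reliance on a Zalcman variant (with $\alpha=-k$ and bounded $|g^{(k)}|$ at zeros) stronger than the paper's Lemma~\ref{L.2}. That variant, and the inequality $\sup_{\zeta}\bigl[(a/k!)(\zeta-\zeta_1)^k\bigr]^{\#}<k|a|+1$ on which your polynomial case hinges, are standard but should be cited explicitly, since neither is available from the paper's stated lemmas.
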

\begin{proof} We know that normality is a local property. Therefore it is enough to show that $\mathcal{F}$ is normal at each point $z_0\in \Omega$. Without loss of generality, we assume that $\Omega=\Delta=\{z: |z|<1\}$. Suppose on the contrary that $\mathcal{F}$ is not normal in $\Delta$. Consequently we may assume that $\mathcal{F}$ is not normal at $z_0\in\Delta$. 
Then by \textrm{Lemma \ref{L.2}}, there exist
\begin{enumerate} 
\item[(i)] points $z_{n}\in\Delta$, $z_{n}\rightarrow z_0$, 
\item[(ii)] positive numbers $\rho_{n}$, $\rho_{n}\rightarrow 0$, 
\item[(iii)] functions $f_{n}\in \mathcal{F}$
 \end{enumerate}
such that
\begin{align}
\label{L4.1} F_{n}(\zeta)=f_{n}(z_{n}+\rho_{n}\zeta)\rightarrow F(\zeta)
\end{align}  
spherically uniformly on compact subsets of $\mathbb{C}$, where $F(\zeta)$ is a non-constant entire function such that $F^{\#}(\zeta)\leq F^{\#}(0)=1\;(\zeta\in\mathbb{C})$. Since $F^{\#}(\zeta)\leq 1$, $\forall$ $\zeta\in\mathbb{C}$, by Lemma \ref{L.3}, we have  $\rho(F)\leq 1$. Since $f_{n}(z_{n}+\rho_{n}\zeta)-a\rightarrow F(\zeta)-a$, by Hurwitz's theorem, we conclude that all the zeros of $F(\zeta)-a$ have multiplicity at least $k$. Clearly, $F^{(k)}(\zeta)\not\equiv 0$; otherwise $F(\zeta)$ would be a polynomial of degree less than $k$, and therefore $F(\zeta)-a$ could not have a zero of multiplicity at least $k$. Now from (\ref{L4.1}), we have
\begin{align}
\label{L4.2} F_n^{(k)}(\zeta)=\rho_n^{k}f_n^{(k)}(z_n+\rho_n \zeta)\rightarrow F^{(k)}(\zeta)
\end{align}
spherically uniformly on compact subsets of $\mathbb{C}$.

\medskip
First we prove that $F(\zeta)=a\Rightarrow F^{(k)}(\zeta)=0$. Let $F(\zeta_0)=a$. Then by (\ref{L4.1}) and  Hurwitz's theorem there exists a sequence $\{\zeta_n\}$ such that $\zeta_n\rightarrow \zeta_0$ and $f_{n}(z_{n}+\rho_{n}\zeta_n)=a$ for sufficiently large values of $n$. This implies that $f_n(z_n+\rho_n \zeta_n)=a$. Since $f(z)=a\Rightarrow f^{(k)}(z)=a$, we have $f^{(k)}_n(z_n+\rho_n \zeta_n)=a$. Consequently, from (\ref{L4.2}), we have
\begin{align*}
F^{(k)}(\zeta_0)=\lim\limits_{n\rightarrow \infty}F^{(k)}_n(\zeta_n)=\lim\limits_{n\rightarrow \infty} \rho_n^{k} a=0.
\end{align*}

This shows that $F(\zeta)=a\Rightarrow F^{(k)}(\zeta)=0$.

\medskip
Next we prove that $F^{(k)}(\zeta)=0\Rightarrow F(\zeta)=b$ in $\mathbb{C}$. Clearly from (\ref{L4.2}), we have 
\begin{align}
\label{L4.3} F_n^{(k)}(\zeta)=\rho_n^{k}\left\lbrace f_n^{(k)}(z_n+\rho_n \zeta)-b\right\rbrace \rightarrow F^{(k)}(\zeta)
\end{align}
spherically uniformly on compact subsets of $\mathbb{C}$. Let $F^{(k)}(\zeta_0)=0$. Now by (\ref{L4.3}) and Hurwitz's theorem there exists a sequence $\{\zeta_n\}$ such that $\zeta_n\rightarrow \zeta_0$ and $f_n^{(k)}(z_n+\rho_n \zeta_n)=b$. Since $f^{(k)}(z)=a\Rightarrow f(z)=a$, we have $f_n(z_n+\rho_n \zeta_n)=b$ and so from (\ref{L4.1}), we get
\begin{align*}
 F(\zeta_0)=\lim\limits_{n\rightarrow \infty}F_n(\zeta_n)=\lim\limits_{n\rightarrow \infty} f_n(z_n+\rho_n \zeta_n)=b.
\end{align*}  

This shows that $F^{(k)}(\zeta)=0\Rightarrow F(\zeta)=b$. Finally, we have $F(\zeta)=a\Rightarrow F^{(k)}(\zeta)=0$ and $F^{(k)}(\zeta)=0\Rightarrow F(\zeta)=b$. Since $a\not=b$, we get a contradiction. Thus, all of the above considerations imply that $\mathcal{F}$ is normal at $z_0$. Consequently $\mathcal{F}$ is normal in $\Omega$.
\end{proof}

\begin{lem}\label{L.5} Let $\mathcal{F}$ be a family of holomorphic functions in a domain $\Omega$, $a$ and $b$ be two non-zero finite complex numbers such that $a\neq b$ and let $k$ be a positive integer. Then $\mathcal{F}$ is normal in $\Omega$, if for each $f\in\mathcal{F}$, we have   
\begin{enumerate}
\item[(1)] either $f(z)-a$ has no zeros or all the zeros of $f(z)-a$ have multiplicity at least $k$,
\item[(2)] $f(z)=a\Rightarrow f^{(k)}(z)=a$,
\item[(3)] $f^{(k)}(z)=b\Rightarrow f(z)=b$.
\end{enumerate}
\end{lem}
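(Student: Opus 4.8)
The plan is to reduce Lemma~\ref{L.5} to Lemma~\ref{L.4} by a local normalization argument that handles the only genuinely new situation, namely a point $z_0$ at which $f(z)-a$ has no zeros. First I would observe that normality is local, so it suffices to prove normality at an arbitrary $z_0\in\Omega$, and we may assume $\Omega=\Delta$. If there is a neighborhood of $z_0$ on which \emph{every} member of $\mathcal F$ satisfies hypothesis~(1) in the stronger form ``all zeros of $f-a$ have multiplicity at least $k$'' (e.g. if infinitely many $f_n$ in a would-be bad sequence have such a zero nearby), then Lemma~\ref{L.4} applies directly and gives normality at $z_0$. The only remaining case is that, after passing to a subsequence, $f_n(z)-a$ is zero-free on a fixed disc $\Delta_r=\{|z-z_0|<r\}$ for all $n$.

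In that zero-free case I would argue as follows. Suppose $\mathcal F$ is not normal at $z_0$; apply Zalcman's lemma (Lemma~\ref{L.2}) with $\alpha=0$ to produce $z_n\to z_0$, $\rho_n\to 0^+$, and $f_n\in\mathcal F$ with $F_n(\zeta)=f_n(z_n+\rho_n\zeta)\to F(\zeta)$ locally uniformly, $F$ a non-constant entire function with $F^\#(\zeta)\le F^\#(0)=1$; by Lemma~\ref{L.3}, $\rho(F)\le 1$. Since each $f_n-a$ is zero-free on $\Delta_r$, Hurwitz's theorem forces $F(\zeta)-a$ to be either zero-free on all of $\mathbb C$ or identically $a$; the latter is impossible since $F$ is non-constant, so $F(\zeta)\ne a$ for all $\zeta$. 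As in the proof of Lemma~\ref{L.4}, hypothesis~(3) together with $F_n^{(k)}(\zeta)=\rho_n^k\{f_n^{(k)}(z_n+\rho_n\zeta)-b\}\to F^{(k)}(\zeta)$ and Hurwitz's theorem gives $F^{(k)}(\zeta)=0\Rightarrow F(\zeta)=b$. Now write $F-a=e^{\beta}$ for an entire function $\beta$ with $\rho(\beta)\le 1$ (so $\beta$ is a polynomial of degree $\le 1$, i.e.\ $\beta(\zeta)=c\zeta+d$). If $c=0$ then $F$ is constant, a contradiction; hence $F(\zeta)=e^{c\zeta+d}+a$ with $c\ne 0$, and a direct computation gives $F^{(k)}(\zeta)=c^k e^{c\zeta+d}$, which is nowhere zero. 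Thus the implication $F^{(k)}(\zeta)=0\Rightarrow F(\zeta)=b$ holds vacuously and yields no contradiction \emph{yet}; so I would instead exploit hypothesis~(2) on the original functions.

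To bring in hypothesis~(2) I would look more carefully at the sequence. Since each $f_n-a$ is zero-free near $z_0$, hypothesis~(2) says nothing locally, so the contradiction cannot come purely from the rescaled limit; the correct move is to go back a step and normalize so that the bad behavior of $\mathcal F$ is recorded by zeros of $f_n^{(k)}-b$. Concretely, from $F(\zeta)=e^{c\zeta+d}+a$ we read off that $F-b$ has zeros (because $b\ne a$, so $b-a\ne 0$ and $e^{c\zeta+d}=b-a$ is solvable), while $F^{(k)}$ is zero-free; I would instead apply Zalcman's lemma to the auxiliary family $\mathcal G=\{f^{(k)}:f\in\mathcal F\}$ is \emph{not} guaranteed normal, so rather I track the value $b$ directly: by Hurwitz there are $\zeta_n\to$ a zero $\zeta^*$ of $F(\zeta)-b$ with $f_n(z_n+\rho_n\zeta_n)\to b\ne a$. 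Now apply hypothesis~(3) in the contrapositive form ``$f_n(z)\ne b\Rightarrow f_n^{(k)}(z)\ne b$'' near that point together with hypothesis~(2) at the \emph{a}-level; I would then re-run Zalcman at the scale where $f_n^{(k)}-b$ develops a zero, obtaining a second limit function $G$ with $G(\zeta)=a\Rightarrow G^{(k)}(\zeta)=0$ and $G^{(k)}(\zeta)=0\Rightarrow G(\zeta)=b$, and since $a\ne b$ this is the needed contradiction exactly as in Lemma~\ref{L.4}. The main obstacle is precisely this: hypothesis~(2) is vacuous in the zero-free case, so one cannot copy the proof of Lemma~\ref{L.4} verbatim; the argument must instead produce a contradiction from the interaction between the zeros of $f_n-b$ (forced by the exponential shape of the limit and $b\ne a$) and hypothesis~(3), and the careful bookkeeping needed to re-localize at those zeros and re-apply Zalcman's lemma is the delicate part.
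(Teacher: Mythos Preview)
Your proposal has a genuine gap in the zero-free case, and the fix you sketch does not work. With $\alpha=0$ you correctly arrive at $F(\zeta)=e^{c\zeta+d}+a$ with $c\neq 0$, and you correctly note that $F^{(k)}$ is nowhere zero, so the implication coming from hypothesis~(3) is vacuous. At that point your argument breaks down: Hurwitz at a zero of $F-b$ gives points with $f_n(z_n+\rho_n\zeta_n)=b$, but hypothesis~(3) goes the \emph{wrong direction} ($f^{(k)}=b\Rightarrow f=b$, not conversely), so you obtain no information about $f_n^{(k)}$ at those points. Your proposed ``re-run Zalcman at the scale where $f_n^{(k)}-b$ develops a zero'' is not justified: you have not produced any such zero, and you have not exhibited a new family whose non-normality would trigger Zalcman's lemma. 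The promised ``second limit function $G$'' with $G=a\Rightarrow G^{(k)}=0$ also cannot appear, because in your subsequence $f_n-a$ is zero-free near $z_0$, so hypothesis~(2) is vacuous throughout.

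The paper avoids this trap by a different rescaling. It applies Zalcman's lemma to $\mathcal F_a=\{f-a:f\in\mathcal F\}$ with exponent $\alpha=-\tfrac12$, setting $F_n(\zeta)=\rho_n^{-1/2}\{f_n(z_n+\rho_n\zeta)-a\}\to F(\zeta)$, and uses the explicit formula $\rho_n=1/(f_n-a)^{\#}(z_n)$ from the proof of Zalcman's lemma. In the zero-free case $F(\zeta)=A\exp(\lambda\zeta)$, and one computes
\[
\rho_n\left|\frac{f_n^{(1)}(z_n)}{f_n(z_n)-a}\right|
=\frac{1+|f_n(z_n)-a|^2}{|f_n(z_n)-a|}\longrightarrow |\lambda|\neq 0,
\]
which forces $f_n(z_n)-a$ to have a finite nonzero limit; but then $F_n(0)=\rho_n^{-1/2}(f_n(z_n)-a)\to\infty$, contradicting $F_n(0)\to A$. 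In the remaining case the $\rho_n^{-1/2}$ scaling is again the key: if $F^{(k)}(\zeta_0)=0$ one finds $f_n^{(k)}(z_n+\rho_n\zeta_n)=b$, hence $f_n(z_n+\rho_n\zeta_n)=b$ by hypothesis~(3), and then $F_n(\zeta_n)=\rho_n^{-1/2}(b-a)\to\infty$, contradicting $F(\zeta_0)$ finite. Thus $F^{(k)}\neq 0$, which combined with $F=0\Rightarrow F^{(k)}=0$ forces $F\neq 0$ and reduces to the first case. The choice $\alpha=-\tfrac12$ and the use of the spherical-derivative identity are precisely the missing ideas in your attempt.
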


\begin{proof} We know that normality is a local property. Therefore it is enough to show that $\mathcal{F}$ is normal at each point $z_0\in \Omega$. Without loss of generality, we assume that $\Omega=\Delta=\{z: |z|<1\}$. Suppose on the contrary that $\mathcal{F}$ is not normal in $\Delta$. Set $\mathcal{F}_a=\{f-a: f\in\mathcal{F}\}$. Then $\mathcal{F}_a$ is not normal in $\Delta$. Consequently, we may assume that $\mathcal{F}_a$ is not normal at $z_0\in\Delta$. 
Then by Lemma \ref{L.2}, there exist
\begin{enumerate} 
\item[(i)] points $z_{n}\in\Delta$, $z_{n}\rightarrow z_0$, 
\item[(ii)] positive numbers $\rho_{n}$, $\rho_{n}\rightarrow 0$, 
\item[(iii)] functions $f_{n}\in \mathcal{F}_a$
 \end{enumerate}
such that
\begin{align}
\label{L5.1} F_{n}(\zeta)=\rho_{n}^{-\frac{1}{2}}\lbrace f_{n}(z_{n}+\rho_{n}\zeta)-a\rbrace\rightarrow F(\zeta)
\end{align}  
spherically uniformly on compact subsets of $\mathbb{C}$, where $F(\zeta)$ is a non-constant entire function such that $F^{\#}(\zeta)\leq F^{\#}(0)=1\;(\zeta\in\mathbb{C})$. Since $F^{\#}(\zeta)\leq 1$, $\forall$ $\zeta\in\mathbb{C}$, by Lemma \ref{L.1}, we get $\rho(F)\leq 1$. On the other hand from the proof of Zalcman's lemma (see \cite{LZ, LZ1}), we get
\begin{align}
\label{L5.2} \rho_{n}=\frac{1}{g_{n}^{\#}(z_{n})},
\end{align}
where $g_n(z_n)=f_{n}(z_{n})-a$. Also by Hurwitz's theorem, we conclude that either $F(\zeta)$ has no zeros or all the
zeros of $F(\zeta)$ have multiplicity at least $k$. Now from (\ref{L5.1}), we have
\bea\label{L5.3} F_n^{(k)}(\zeta)=\rho_n^{k-\frac{1}{2}}f_n^{(k)}(z_n+\rho_n \zeta)\rightarrow F^{(k)}(\zeta)\eea
spherically uniformly on compact subsets of $\mathbb{C}$. 
We consider the following two cases.\par
\smallskip

{\bf Case 1.} Suppose $F(\zeta)$ has no zeros. Then by Hadamard's factorization theorem, we may assume that 
\begin{align}
\label{L5.6} F(\zeta)=A\exp(\lambda \zeta),
\end{align}
where $A$ and $\lambda$ are non-zero constants. Clearly from (\ref{L5.1}) and (\ref{L5.3}), we have 
\begin{align}
\label{L5.7} \frac{F_{n}^{(1)}(\zeta)}{F_{n}(\zeta)}=\rho_{n}\frac{f_{n}^{(1)}(z_{n}+\rho_{n}\zeta)}{f_{n}(z_{n}+\rho_{n}\zeta)-a}\rightarrow \frac{F^{(1)}(\zeta)}{F(\zeta)}=\lambda,
\end{align}
spherically uniformly on compact subsets of $\mathbb{C}$. Now from (\ref{L5.2}) and (\ref{L5.7}), we get 
\begin{align*}
 \rho_{n}\left|\frac{f_{n}^{(1)}(z_{n})}{f_{n}(z_{n})-a}\right|=\frac{1+|f_{n}(z_{n})-a|^{2}}{|f_{n}^{(1)}(z_{n})|}\frac{|f_{n}^{(1)}(z_{n})|}{|f_{n}(z_{n})-a|}
=\frac{1+|f_{n}(z_{n})-a|^{2}}{|f_{n}(z_{n})-a|}\rightarrow \left|\frac{F^{(1)}(0)}{F(0)}\right|=|\lambda|,
\end{align*}
which implies that 
\begin{align*}
\lim\limits_{n\rightarrow \infty} \lbrace f_{n}(z_{n})-a\rbrace\not=0,\infty
\end{align*}
and so from (\ref{L5.1}), we have
\begin{align*}
F_{n}(0)=\rho_{n}^{-\frac{1}{2}}\lbrace f_{n}(z_{n})-a\rbrace \rightarrow \infty.
\end{align*}

But, from (\ref{L5.1}) and (\ref{L5.6}), we have $F_{n}(0)\rightarrow F(0)=A$. So, we get a contradiction.\par 
\smallskip

{\bf Case 2.} Suppose all the zeros of $F(\zeta)$ have multiplicity at least $k$. Clearly, $F^{(k)}(\zeta)\not\equiv 0$; otherwise $F(\zeta)$ would be a polynomial of degree less than $k$, and therefore could not have a zero of multiplicity at least $k$.\par

\medskip
First we prove that $F(\zeta)=0\Rightarrow F^{(k)}(\zeta)=0$. Let $F(\zeta_0)=0$. Then by Hurwitz's theorem there exists a sequence $\{\zeta_n\}$ such that $\zeta_n\rightarrow \zeta_0$ and
\begin{align*}
F_{n}(\zeta_n)=\rho_{n}^{-\frac{1}{2}}\lbrace f_{n}(z_{n}+\rho_{n}\zeta_n)-a\rbrace=0
\end{align*}
for sufficiently large values of $n$.
This implies that $f_n(z_n+\rho_n \zeta_n)=a$. Since $f(z)=a\Rightarrow f^{(k)}(z)=a$, we have $f^{(k)}_n(z_n+\rho_n \zeta_n)=a$. Consequently from (\ref{L5.3}), we have
\begin{align*}
F^{(k)}(\zeta_0)=\lim\limits_{n\rightarrow \infty}F^{(k)}_n(\zeta_n)=\lim\limits_{n\rightarrow \infty} \rho_n^{k-\frac{1}{2}} a=0.
\end{align*}

This shows that $F(\zeta)=0\Rightarrow F^{(k)}(\zeta)=0$.

\medskip
Next we prove that $F^{(k)}(\zeta)\neq 0$ in $\mathbb{C}$. If not, suppose $F^{(k)}(\zeta_0)=0$. Since $F_{n}^{(k)}(\zeta)\rightarrow F^{(k)}(\zeta)$ as $n\to\infty$ and $F^{(k)}(\zeta_0)=0$, by (\ref{L5.3}) and Hurwitz's theorem there exists a sequence $\{\zeta_n\}$ such that $\zeta_n\rightarrow \zeta_0$ and
\begin{align*}
 F_{n}^{(k)}(\zeta_n)=\rho_{n}^{k-\frac{1}{2}}\lbrace f_{n}^{(k)}(z_{n}+\rho_{n}\zeta_n)-b\rbrace=0
 \end{align*}
for sufficiently large values of $n$. This implies that $f_n^{(k)}(z_n+\rho_n \zeta_n)=b$. Since $f^{(k)}(z)=b\Rightarrow f(z)=b$, we have $f_n(z_n+\rho_n \zeta_n)=b$ and so from (\ref{L5.1}), we get
\begin{align}
\label{L5.5} F_{n}(\zeta_n)=\rho_{n}^{-\frac{1}{2}}\lbrace f_{n}(z_{n}+\rho_{n}\zeta_n)-a\rbrace=\rho_{n}^{-\frac{1}{2}}\lbrace b-a\rbrace=(b-a)\rho_{n}^{-\frac{1}{2}}.
\end{align} 

Since $a\neq b$ and $\rho_n\to 0$, from (\ref{L5.5}), we get
\begin{align*}
F(\zeta_0)=\lim\limits_{n\rightarrow \infty}F_n(\zeta_n)=\infty,
\end{align*}
which contradicts the fact that $F^{(k)}(\zeta_0)=0$. Hence $F^{(k)}(\zeta)\neq 0$ in $\mathbb{C}$.

Finally, we have $F(\zeta)=0\Rightarrow F^{(k)}(\zeta)=0$ and $F^{(k)}(\zeta)\neq 0$ in $\mathbb{C}$. Consequently $F(\zeta)\neq 0$ in $\mathbb{C}$. Now proceeding in the same way as done in Case 1, we get a contradiction.

Thus, all of the above considerations imply that $\mathcal{F}$ is normal at $z_0$. Hence $\mathcal{F}$ is normal in $\Omega$. 
\end{proof}

\begin{lem}\label{L.6} Let $\mathcal{F}$ be a family of holomorphic functions in a domain $\Omega$, $k$ be a positive integer and let $a$ be a non-zero constant. Then $\mathcal{F}$ is normal in $\Omega$, if for each $f\in\mathcal{F}$, we have 
\begin{enumerate}
\item[(1)] all the zeros of $f(z)-a$ have multiplicity at least $k$,
\item[(2)] $f(z)=a\Rightarrow f^{(k)}(z)=a$,
\item[(3)] $f^{(k)}(z)=0\Rightarrow f(z)=0$. 
\end{enumerate}
\end{lem}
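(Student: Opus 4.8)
The plan is to follow the scheme of the proof of Lemma~\ref{L.5}, with the value $b$ there replaced by $0$; since the distinguished value is $0$, several steps simplify, and the hypothesis $a\neq 0$ takes over the role played there by $a\neq b$. Since normality is a local property, I would take $\Omega=\Delta$ and assume, for contradiction, that $\mathcal F$ is not normal at some $z_0\in\Delta$. Then $\mathcal F_a=\{f-a:f\in\mathcal F\}$ is not normal at $z_0$ either, so Zalcman's lemma (Lemma~\ref{L.2}), applied with exponent $-\frac12$, provides points $z_n\to z_0$, numbers $\rho_n\to 0^+$ and functions $f_n\in\mathcal F$ with
\[
F_n(\zeta)=\rho_n^{-1/2}\bigl\{f_n(z_n+\rho_n\zeta)-a\bigr\}\longrightarrow F(\zeta)
\]
locally uniformly on $\mathbb{C}$, where $F$ is a non-constant entire function with $F^{\#}(\zeta)\le F^{\#}(0)=1$; moreover, exactly as in the proof of Lemma~\ref{L.5}, $\rho_n=1/g_n^{\#}(z_n)$ with $g_n=f_n-a$, and $\rho(F)\le 1$ by Lemma~\ref{L.3}. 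Differentiating $k$ times removes the additive constant, so $F_n^{(k)}(\zeta)=\rho_n^{k-1/2}f_n^{(k)}(z_n+\rho_n\zeta)\to F^{(k)}(\zeta)$; and by Hurwitz's theorem either $F$ has no zeros or every zero of $F$ has multiplicity at least $k$ (the zeros of $F$ corresponding to the points at which $f_n-a$ vanishes).

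If $F$ has no zeros, then, being non-constant of order at most $1$, it factors as $F(\zeta)=A\exp(\lambda\zeta)$ with $A,\lambda\neq 0$ by Hadamard's theorem. From
\[
\frac{F_n^{(1)}(\zeta)}{F_n(\zeta)}=\rho_n\,\frac{f_n^{(1)}(z_n+\rho_n\zeta)}{f_n(z_n+\rho_n\zeta)-a}\longrightarrow \frac{F^{(1)}(\zeta)}{F(\zeta)}=\lambda
\]
evaluated at $\zeta=0$, together with $\rho_n=1/g_n^{\#}(z_n)$, one obtains $\dfrac{1+|f_n(z_n)-a|^{2}}{|f_n(z_n)-a|}\longrightarrow|\lambda|$, which forces $|f_n(z_n)-a|$ to stay bounded and bounded away from $0$; hence $F_n(0)=\rho_n^{-1/2}\{f_n(z_n)-a\}\to\infty$, contradicting $F_n(0)\to F(0)=A$. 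This is the only step that is not a routine application of Hurwitz's theorem, and it is the main obstacle: a naive (unweighted) rescaling says nothing in the zero-free case, and one really needs the $-\frac12$-weighted normalisation, that is, Marty's criterion entering through $\rho_n=1/g_n^{\#}(z_n)$, to exclude this configuration.

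In the remaining case every zero of $F$ has multiplicity at least $k$, so $F$ cannot be a polynomial of degree less than $k$ and $F^{(k)}\not\equiv 0$. I would then record two consequences of Hurwitz's theorem. First, if $F(\zeta_0)=0$ there are $\zeta_n\to\zeta_0$ with $f_n(z_n+\rho_n\zeta_n)=a$; hypothesis~(2) gives $f_n^{(k)}(z_n+\rho_n\zeta_n)=a$, so $F^{(k)}(\zeta_0)=\lim_{n\to\infty}\rho_n^{k-1/2}a=0$, that is, $F(\zeta)=0\Rightarrow F^{(k)}(\zeta)=0$. Second, $F^{(k)}$ has no zeros: if $F^{(k)}(\zeta_0)=0$, then since $F^{(k)}\not\equiv 0$, Hurwitz applied to $F_n^{(k)}=\rho_n^{k-1/2}f_n^{(k)}(z_n+\rho_n\zeta)$ gives $\zeta_n\to\zeta_0$ with $f_n^{(k)}(z_n+\rho_n\zeta_n)=0$, so hypothesis~(3) gives $f_n(z_n+\rho_n\zeta_n)=0$, whence $F_n(\zeta_n)=-a\,\rho_n^{-1/2}\to\infty$ because $a\neq 0$, contradicting $F_n(\zeta_n)\to F(\zeta_0)$. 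Combining the two, $F$ has no zeros at all, which contradicts the standing hypothesis of this case (equivalently, it throws us back to the zero-free situation already treated). As both cases are impossible, $\mathcal F$ is normal at $z_0$ and therefore in $\Omega$. Only two points in the argument use that the distinguished value is $0$: the identity $F_n^{(k)}=\rho_n^{k-1/2}f_n^{(k)}$ (no shift is needed), and the divergence $F_n(\zeta_n)=-a\rho_n^{-1/2}\to\infty$, which is where $a\neq 0$ replaces the hypothesis $a\neq b$ of Lemma~\ref{L.5}.
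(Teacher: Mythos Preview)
Your proposal is correct and follows exactly the route the paper intends: the paper's own proof simply states that Lemma~\ref{L.6} follows directly from the proof of Lemma~\ref{L.5} and omits the details, and you have faithfully carried out that adaptation with $b=0$, noting correctly the two simplifications (no shift needed in $F_n^{(k)}$, and $a\neq 0$ replaces $a\neq b$ in the divergence step).
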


\begin{proof}  
The proof of the lemma follows directly from that of Lemma \ref{L.5}, and we therefore omit the details.
\end{proof}

\begin{lem}\label{L.7} Let $\mathcal{F}$ be a family of holomorphic functions in a domain $\Omega$, $a$ be a non-zero constant and let $k$ be a positive integer. Then $\mathcal{F}$ is normal in $\Omega$, if for each $f\in\mathcal{F}$, we have  
\begin{enumerate}
\item[(1)] either $f(z)$ has no zeros or all the zeros of $f(z)$ have multiplicity at least $k$,
\item[(2)] $f(z)=a\Rightarrow f^{(k)}(z)=a$,
\item[(3)] $f^{(k)}(z)=0\Rightarrow f(z)=0$. 
\end{enumerate}
\end{lem}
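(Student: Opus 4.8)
The plan is to run the standard Zalcman-rescaling argument by contradiction, exactly in the spirit of the proofs of Lemmas \ref{L.4}--\ref{L.6}. Since normality is a local property, I would fix a point $z_0\in\Omega$, assume $\Omega=\Delta$, and suppose for contradiction that $\mathcal{F}$ is not normal at $z_0$. Because hypothesis (1) constrains the zeros of $f$ itself rather than those of $f-a$, I would apply Lemma \ref{L.2} to $\mathcal{F}$ directly with the exponent $\alpha=0$ (no $\alpha=-1/2$ twist is needed here, in contrast with Lemma \ref{L.5}, because the value $0$ occurring in (3) is invariant under rescaling). This produces $z_n\to z_0$, $\rho_n\to 0^+$ and $f_n\in\mathcal{F}$ with $F_n(\zeta):=f_n(z_n+\rho_n\zeta)\to F(\zeta)$ spherically locally uniformly on $\mathbb{C}$, where $F$ is a non-constant entire function (the $f_n$ being holomorphic, the limit is entire or $\equiv\infty$, and non-constancy excludes the latter) with $F^{\#}(\zeta)\le F^{\#}(0)=1$. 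By Lemma \ref{L.3}, $\rho(F)\le 1$, and differentiating gives $F_n^{(k)}(\zeta)=\rho_n^{k}f_n^{(k)}(z_n+\rho_n\zeta)\to F^{(k)}(\zeta)$ locally uniformly.

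The second step is to transfer the three hypotheses to $F$. By Hurwitz's theorem (applied to $F_n\to F$) together with (1), the limit $F$ either has no zeros or has all of its zeros of multiplicity at least $k$; in particular $F^{(k)}\not\equiv 0$, for otherwise $F$ would be a non-constant polynomial of degree $<k$, whose zeros would then have multiplicity $<k$, contradicting the dichotomy just obtained. If $F(\zeta_0)=a$, then (as $F\not\equiv a$) Hurwitz yields $\zeta_n\to\zeta_0$ with $f_n(z_n+\rho_n\zeta_n)=a$, so $f_n^{(k)}(z_n+\rho_n\zeta_n)=a$ by (2), and hence $F^{(k)}(\zeta_0)=\lim_n\rho_n^{k}a=0$; thus $F(\zeta)=a\Rightarrow F^{(k)}(\zeta)=0$. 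Likewise, if $F^{(k)}(\zeta_0)=0$, then Hurwitz applied to $F_n^{(k)}\to F^{(k)}$ gives $\zeta_n\to\zeta_0$ with $f_n^{(k)}(z_n+\rho_n\zeta_n)=0$, so $f_n(z_n+\rho_n\zeta_n)=0$ by (3), and $F(\zeta_0)=\lim_n F_n(\zeta_n)=0$; thus $F^{(k)}(\zeta)=0\Rightarrow F(\zeta)=0$.

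The third step extracts a contradiction. Since $a\ne 0$, the equalities $F(\zeta)=a$ and $F(\zeta)=0$ cannot hold at the same point, so chaining $F=a\Rightarrow F^{(k)}=0\Rightarrow F=0$ forces $F$ to omit the value $a$ throughout $\mathbb{C}$. Then $F-a$ is a zero-free entire function of order $\le 1$, so Hadamard's factorization theorem gives $F(\zeta)=a+B\exp(\lambda\zeta)$ with $B\ne 0$ and, by non-constancy, $\lambda\ne 0$. But $F'=B\lambda\exp(\lambda\zeta)$ is zero-free, so every zero of $F$ is simple, while $a+B\exp(\lambda\zeta)=0$ is solvable, so $F$ does have zeros. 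For $k\ge 2$ this contradicts the dichotomy for $F$ obtained in the previous step, so $\mathcal{F}$ is normal at $z_0$, and hence in $\Omega$.

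I expect the genuine obstacle to be the borderline case $k=1$: there hypothesis (1) is automatically satisfied by every holomorphic function and therefore contributes nothing, and the exponential $F=a+B\exp(\lambda\zeta)$ is not eliminated by the scheme above, so one cannot conclude normality by this argument alone. Closing this case would require bringing in extra information — either a dedicated normality criterion tailored to the first derivative, or the supplementary hypothesis (such as $\lambda(f)<\rho(f)$) that is present in the intended application to Theorem \ref{t1.5}; controlling this residual exponential limit is the crux. For $k\ge 2$, the argument outlined above is complete as it stands.
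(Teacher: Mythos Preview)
For $k\ge 2$ your argument is correct and runs parallel to the paper's: both apply Zalcman with $\alpha=0$, transfer (1)--(3) to the limit $F$, and exploit the chain $F=a\Rightarrow F^{(k)}=0\Rightarrow F=0$. Where you then write $F=a+B\exp(\lambda\zeta)$ via Hadamard and observe that such an $F$ has only simple zeros (contradicting the multiplicity~$\ge k$ dichotomy), the paper instead invokes the Second Fundamental Theorem at $0$ and $a$ to show that an entire $F$ of order~$\le 1$ whose zeros all have multiplicity~$\ge k\ge 2$ \emph{cannot} omit $a$, and then reads off the contradiction from $F(\zeta_0)=a\Rightarrow F(\zeta_0)=0$ at an actual $a$-point. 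Either closing device is fine; yours avoids Nevanlinna theory at this step.

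The genuine gap is $k=1$, and while your diagnosis of why the $\alpha=0$ scheme stalls is accurate, your proposed remedy is not: no extra hypothesis such as $\lambda(f)<\rho(f)$ is required. The paper closes $k=1$ precisely by the $\alpha=-\tfrac12$ twist you dismissed. One applies Lemma~\ref{L.2} with $\alpha=-\tfrac12$ to the shifted family $\{f-a:f\in\mathcal{F}\}$ (for $k=1$ condition~(1) is vacuous, so nothing is lost by shifting), obtaining $F_n(\zeta)=\rho_n^{-1/2}\bigl(f_n(z_n+\rho_n\zeta)-a\bigr)\to F(\zeta)$ and $F_n'(\zeta)=\rho_n^{1/2}f_n'(z_n+\rho_n\zeta)\to F'(\zeta)$. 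If $F'(\zeta_0)=0$, Hurwitz gives $f_n'(z_n+\rho_n\zeta_n)=0$, whence $f_n(z_n+\rho_n\zeta_n)=0$ by~(3), and therefore $F_n(\zeta_n)=-a\,\rho_n^{-1/2}\to\infty$; this is the step where the negative exponent is essential, exactly as in Case~2 of Lemma~\ref{L.5}. Thus $F'$ is zero-free; combined with $F=0\Rightarrow F'=0$ (from~(2)) this forces $F$ itself to be zero-free, and the resulting exponential limit is then eliminated by the normalisation argument of Case~1 in Lemma~\ref{L.5}. Your remark that ``$0$ is invariant under rescaling'' explains why the \emph{transfer} of~(3) already succeeds with $\alpha=0$; it is the \emph{closing} contradiction that needs the $-\tfrac12$ twist.
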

\begin{proof} We prove Lemma \ref{L.7} by considering the following two cases.\par

{\bf Case 1.} Let $k=1$. The proof follows directly from the proof of Lemma \ref{L.5}, and we therefore omit the details.\par

{\bf Case 2.} Let $k\geq 2$. In this case, proceeding in the same way as done in the proof of Lemma \ref{L.4}, we arrive at (\ref{L4.1}), 
where $F(\zeta)$ is a non-constant entire function such that $\rho(F)\leq 1$ and either $F(\zeta)$ has no zeros or all the zeros of $F(\zeta)$ have multiplicity at least $k$. 

We claim that $a$ is not a Picard exceptional of $F$. If not, suppose $a$ is a Picard exceptional value of $F$. Now using Second fundamental theorem, we have 
\begin{align*}
T(r, F)\leq \bar N(r, 0;F)+\bar N(r, a;F)+S(r, F)\leq \frac{1}{k}N(r, 0;F)+S(r,F)\leq \frac{1}{k}T(r,F)+S(r,F),
\end{align*}
which is impossible since $k\geq 2$. Hence $a$ is not a Picard exceptional value of $F$. Following the proof of Lemma \ref{L.4}, we ultimately arrive at a contradiction.

Hence all the foregoing discussion shows that $\mathcal{F}$ is normal at $z_0$. So $\mathcal{F}$ is normal in $\Omega$. 
\end{proof}

\begin{lem}\label{LL.1} \cite[Theorem 4.1]{HKR}\cite{NO} Let $f(z)$ be a non-constant entire function such that $\rho(f)\leq 1$ and let $k$ be a positive integer. Then
\[m\left(r,\frac{f^{(k)}}{f}\right)=o(\log r)\;\;(r\to \infty).\]
\end{lem}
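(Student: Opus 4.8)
The plan is to reduce to the first-derivative case, factor $f$ by Hadamard's theorem, and then split the logarithmic derivative of the resulting canonical product into a ``near'' and a ``far'' part; the far part is elementary, and the near part is precisely the content of the cited sharp logarithmic-derivative estimate, exploited through the hypothesis $\rho(f)\le 1$.

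First I would reduce to $k=1$. If $f$ is a polynomial then $f^{(k)}/f$ is rational and $m(r,f^{(k)}/f)=O(1)$, so assume $f$ transcendental; then every $f^{(j)}$ is transcendental, hence $\not\equiv 0$. From $\frac{f^{(k)}}{f}=\prod_{j=1}^{k}\frac{f^{(j)}}{f^{(j-1)}}$ and $\log^{+}\prod_{j}|a_{j}|\le\sum_{j}\log^{+}|a_{j}|$ one obtains
\[
m\left(r,\frac{f^{(k)}}{f}\right)\le\sum_{j=1}^{k}m\left(r,\frac{(f^{(j-1)})'}{f^{(j-1)}}\right).
\]
Cauchy's estimate $|g'(z)|\le M(|z|+1,g)$ shows that differentiation does not raise the order, so $\rho(f^{(j-1)})\le\rho(f)\le 1$ for each $j$. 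Hence it suffices to prove $m(r,g'/g)=o(\log r)$ for an arbitrary entire function $g$ with $\rho(g)\le 1$, since a finite sum of such terms is again $o(\log r)$.

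For such a $g$ --- which may be taken transcendental, otherwise $g'/g$ is rational --- I would use Hadamard's factorization theorem to write $g(z)=z^{m}e^{\alpha z+\beta}P(z)$, with $P$ the canonical product over the nonzero zeros $z_{n}$ of $g$. Since $\lambda(g)\le\rho(g)\le 1$, the counting function $n(t)=\#\{n:|z_{n}|\le t\}$ satisfies $n(t)=O_{\varepsilon}(t^{1+\varepsilon})$ for every $\varepsilon>0$, and $\sum_{n}|z_{n}|^{-2}<\infty$, so $P$ has genus $0$ or $1$. Thus, with $c\in\{0,1\}$ the genus of $P$,
\[
\frac{g'}{g}(z)=\frac{m}{z}+\alpha+\sum_{|z_{n}|\le 2r}\left(\frac{1}{z-z_{n}}+\frac{c}{z_{n}}\right)+\sum_{|z_{n}|>2r}\left(\frac{1}{z-z_{n}}+\frac{c}{z_{n}}\right).
\]
Standard estimates for canonical products (using $n(t)=O_{\varepsilon}(t^{1+\varepsilon})$) show that on $|z|=r$ the ``far'' sum, the term $\sum_{|z_{n}|\le 2r}c/z_{n}$, and $m/z$ are all $O_{\varepsilon}(r^{\varepsilon})$. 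Hence, writing $Q(z)=\prod_{|z_{n}|\le 2r}(z-z_{n})$ --- a polynomial of degree $N:=n(2r)$ with all roots in $\{|z|\le 2r\}$ --- one gets $m(r,g'/g)\le m(r,Q'/Q)+C\varepsilon\log r+O_{\varepsilon}(1)$ with $C$ absolute.

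It then remains to estimate $m(r,Q'/Q)$, and this is where the real difficulty lies: here I would invoke the sharp form of the lemma on the logarithmic derivative (Ngoan--Ostrovskii; see \cite{NO} and \cite[Theorem~4.1]{HKR}), which for a polynomial $Q$ of degree $N$ with roots in $\{|z|\le 2r\}$ yields $m(r,Q'/Q)\le\log^{+}(N/r)+O(\log\log r)$ with absolute constants. Since $N=n(2r)=O_{\varepsilon}(r^{1+\varepsilon})$, this is $\le\varepsilon\log r+O(\log\log r)+O_{\varepsilon}(1)$, so $m(r,g'/g)\le(C+1)\varepsilon\log r+O(\log\log r)+O_{\varepsilon}(1)$ for every $\varepsilon>0$; since $m(r,g'/g)\ge 0$ and $\log\log r=o(\log r)$, this forces $m(r,g'/g)=o(\log r)$, and with the first step $m(r,f^{(k)}/f)=o(\log r)$. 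I expect this last step to be the main obstacle: the crude inequality $\log^{+}\bigl|\sum_{|z_{n}|\le 2r}(z-z_{n})^{-1}\bigr|\le\log N+\sum_{|z_{n}|\le 2r}\log^{+}|z-z_{n}|^{-1}$ integrates merely to $O(\log N)=O(\log r)$, which is too weak here, so one genuinely needs the cancellation among the $N$ simple poles of $Q'/Q$ on $|z|=r$ that the Ngoan--Ostrovskii estimate captures --- equivalently, the fact that it is $T(R,g)/r$, and not $T(R,g)$, that enters inside the $\log^{+}$, together with the absence of an exceptional set. Alternatively, one may simply quote \cite[Theorem~4.1]{HKR} for $m(r,f^{(k)}/f)$ directly, all the intervening steps being routine.
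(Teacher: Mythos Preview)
The paper gives no proof of this lemma at all: it is stated with a direct citation to \cite[Theorem~4.1]{HKR} and \cite{NO} and then used as a black box. Your closing sentence---``one may simply quote \cite[Theorem~4.1]{HKR} for $m(r,f^{(k)}/f)$ directly''---is exactly what the paper does, and is the intended ``proof''.

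Your longer sketch is a correct unpacking of how the $o(\log r)$ conclusion for entire $f$ with $\rho(f)\le 1$ falls out of the general Gol'dberg--Grinshtein / Ngoan--Ostrovskii estimate: the reduction to $k=1$, the Hadamard factorisation, and the near/far split are all sound, and the $\varepsilon$-argument at the end legitimately yields $o(\log r)$. But note that at the decisive step you still invoke the Ngoan--Ostrovskii bound $m(r,Q'/Q)\le\log^{+}(N/r)+O(\log\log r)$, which is the content of the very references \cite{NO}, \cite{HKR} being cited---so the elaboration is a repackaging rather than an independent argument. There is no harm in this, but since the paper treats the lemma as a quotation, nothing beyond the citation is expected here.
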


\begin{lem} \label{LL.2} \cite[Theorem 2]{LZY1} Let $f(z)$ be a non-constant entire function of finite order, and let $a\neq 0$ be a finite constant. If $f(z)$ and $f^{(k)}(z)$ share the value $a$ CM, then
\[\frac{f^{(k)}(z)-a}{f(z)-a}=c,\]
for some non-zero constant $c$.
\end{lem}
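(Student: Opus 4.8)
The plan is to run the familiar reduction and then eliminate a nonconstant exponential factor. Since $f$ is entire it has no poles, so the hypothesis that $f$ and $f^{(k)}$ share $a$ CM makes $\psi:=\dfrac{f^{(k)}-a}{f-a}$ an entire function with no zeros (the zeros of $f-a$ are cancelled, and $f^{(k)}-a$ has no other zeros); as $f$ has finite order, so do $f^{(k)}$ and, since $T(r,\psi)\le T(r,f^{(k)})+T(r,f)+O(1)$, the function $\psi$, whence $\psi=e^{P}$ for some polynomial $P$. It therefore suffices to prove $\deg P=0$. Note first that $f$ is transcendental: a short degree count shows a nonconstant polynomial cannot share a nonzero value CM with its $k$-th derivative, so $f^{(k)}\not\equiv0$. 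Writing the relation as $e^{P}=\dfrac{(f-a)^{(k)}}{f-a}-\dfrac{a}{f-a}$ and invoking the logarithmic derivative estimate (with error $O(\log r)$, since $f$ has finite order) gives $T(r,e^{P})=m(r,e^{P})\le m\!\left(r,\tfrac1{f-a}\right)+O(\log r)\le T(r,f)+O(\log r)$, so $\deg P\le\rho(f)$; in particular, if $\deg P\ge1$ then $\rho(f)\ge1$.

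Assume for contradiction that $n:=\deg P\ge1$. Put $g:=f-a$ and use the Hadamard factorization $g=\Pi\,e^{Q}$, with $\Pi$ the canonical product over the zeros of $g$ (so $\rho(\Pi)=\lambda(f-a)\le\rho(f)$) and $Q$ a polynomial. Since $(\Pi e^{Q})^{(k)}=e^{Q}(D+Q')^{k}\Pi$ with $D:=d/dz$, the equation $g^{(k)}-a=e^{P}g$ becomes
\[
(D+Q')^{k}\Pi\;-\;e^{P}\,\Pi\;=\;a\,e^{-Q}.
\]
If $g$ has no zeros (i.e.\ $a$ is a Picard value of $f$), then $\Pi\equiv1$, $Q$ is nonconstant, and this reads $R-e^{P}-a\,e^{-Q}\equiv0$, where $R$ is the polynomial with $(e^{Q})^{(k)}=R\,e^{Q}$ and $R\not\equiv0$ because $f^{(k)}=(e^{Q})^{(k)}\not\equiv0$. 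If $P+Q$ is nonconstant, the exponents $0,P,-Q$ have pairwise nonconstant differences, so Borel's theorem on linear combinations of exponentials of polynomials forces every coefficient to vanish—in particular the constants $-1$ and $-a$—which is absurd. If $P+Q$ is constant, then $e^{-Q}$ is a constant multiple of $e^{P}$, the identity collapses to $R=c\,e^{P}$ for a constant $c$, impossible since $R$ is a polynomial and $\deg P\ge1$ (the subcase $c=0$ forces $R\equiv0$). Hence $\deg P=0$ in this case.

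It remains to treat the case that $g=f-a$ has zeros; here every such zero has multiplicity at most $k$, for a zero of multiplicity $m\ge k+1$ would make $f^{(k)}=(f-a)^{(k)}$ vanish there, so $f^{(k)}-a$ would equal $-a\neq0$ there and not vanish, contradicting CM sharing. Dividing the displayed identity by $\Pi$ gives $\dfrac{(D+Q')^{k}\Pi}{\Pi}-e^{P}=\dfrac{a\,e^{-Q}}{\Pi}$, whose first left-hand term is a polynomial combination of logarithmic derivatives $\Pi^{(j)}/\Pi$ and so has proximity function $O(\log r)$. The plan is to couple this with a Wiman--Valiron analysis of $g$: along points $z_{r}$ with $|z_{r}|=r$, $|g(z_{r})|=M(r,g)$ (and $r$ outside a set of finite logarithmic measure) one has $g^{(k)}(z_{r})/g(z_{r})=(\nu_{g}(r)/z_{r})^{k}(1+o(1))$ with central index $\nu_{g}(r)\le r^{\rho(f)+\varepsilon}$, while $a/g(z_{r})\to0$; hence $|e^{P(z_{r})}|$ grows at most polynomially in $r$, which confines $z_{r}$ to thin sectors where $\mathrm{Re}\,P$ is small, and there a comparison of $\arg e^{P(z_{r})}$ with $\arg(\nu_{g}(r)/z_{r})^{k}=-k\arg z_{r}$ yields a congruence modulo $2\pi$ that a polynomial $P$ of degree $\ge1$ cannot satisfy for all such $r$ (the multiplicity bound on the zeros of $g$ keeps the Wiman--Valiron exceptional set negligible). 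This contradiction gives $\deg P=0$, so $\psi\equiv c$, and $c\neq0$ because $\psi$ has no zeros.

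The step I anticipate as the main obstacle is precisely this last one: as soon as $f-a$ has infinitely many zeros, the canonical product $\Pi$ may have order as large as $\rho(f)\ge n$, so the tidy Borel-type identity is unavailable, and one is forced to make the growth estimate rigorous—above all, controlling the interaction between the direction of the maximum-modulus points $z_{r}$ and the neutral sectors of $P$, uniformly in $r$ off the exceptional set. (This is presumably where the argument of \cite{LZY1} does its real work, and, given the scope of the present paper, it is legitimate to cite the result rather than reproduce that analysis.)
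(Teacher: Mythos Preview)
The paper does not prove this lemma; it is quoted verbatim from \cite[Theorem~2]{LZY1} and used as a black box. So there is no argument in the paper to compare against---the authors themselves defer to the literature, which is precisely what your closing sentence recommends.

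On the merits of your attempt: the reduction to $\psi=e^{P}$ with $P$ a polynomial, the transcendence of $f$, the bound $\deg P\le\rho(f)$, and your treatment of the zero-free case (Case~1) via Borel's lemma are all correct and cleanly executed. The gap is exactly where you place it, but it is more than a missing estimate: the mechanism you propose does not produce a contradiction. The Wiman--Valiron relation $e^{P(z_r)}=(\nu_g(r)/z_r)^{k}(1+o(1))$ does force $|\mathrm{Re}\,P(z_r)|=O(\log r)$ and hence confines $\theta_r=\arg z_r$ to within $O(r^{-1}\log r)$ of the $2n$ critical rays of $P$. But the ``congruence modulo $2\pi$'' for the imaginary part is no constraint at all: it merely determines an integer $\ell(r)$ through $\mathrm{Im}\,P(z_r)=-k\theta_r+2\pi\ell(r)+o(1)$, and nothing prevents $\ell(r)$ from tracking $\pm|c_n|r^{n}/(2\pi)$ as $r$ grows. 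There is no continuity of $z_r$ in $r$ to exploit, and the multiplicity bound $\le k$ on the zeros of $f-a$ plays no role in this step. So Case~2, as written, is a heuristic rather than a sketch with a routine hole; your final paragraph is an accurate self-assessment, and citing \cite{LZY1} is the right move.
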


\begin{lem}\label{LL.3} \cite[Theorem 1.2]{LY}
Let $f(z)$ be a non-constant entire function, $k$ be a positive integer and let $Q(z)$ be a polynomial such that all the zeros of $f(z)-Q(z)$ have multiplicity at least $k$. If $f(z)\equiv f^{(k)}(z)$, then one of the following cases must occur:
\begin{enumerate}
\item[(1)] $k=1$ and $f(z)=A\exp(z)$, where $A(\neq 0)$ is constant,

\smallskip
\item[(2)] $k=2$, $Q(z)$ reduces to a constant. If $Q(z)\equiv 0$, then $f(z)=A\exp(\lambda z)$, where $A$ and $\lambda$ are non-zero constants such that $\lambda^2=1$. If $Q(z)\equiv a$, a non-zero constant, then $f(z)=c_0\exp(z)+c_1\exp(-z)$, where $c_0$ and $c_1$ are non-zero constants such that $c_0c_1=\frac{a^2}{4}$,

\smallskip
\item[(3)] $k\geq 3$, $Q(z)$ reduces to $0$ and $f(z)=A\exp(\lambda z)$, where $A$ and $\lambda$ are non-zero constants such that $\lambda^k=1$.
\end{enumerate}
\end{lem}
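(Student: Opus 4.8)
The plan is to start by solving the functional equation $f\equiv f^{(k)}$. Its characteristic polynomial $\zeta^{k}-1$ has $k$ distinct, all non-zero, roots $\omega_{0},\dots,\omega_{k-1}$ (the $k$-th roots of unity, $\omega_{0}=1$), so $f(z)=\sum_{j\in I}c_{j}e^{\omega_{j}z}$ for some non-empty index set $I$ with every $c_{j}\neq 0$; put $t=|I|\in\{1,\dots,k\}$. A non-constant polynomial cannot coincide with its own $k$-th derivative, so $f$ is transcendental; hence $\rho(f)=1$ and $Q$ is a small function of $f$. If $k=1$ then $f=Ae^{z}$, which is case (1) (the multiplicity hypothesis is vacuous here); so assume $k\geq 2$ from now on.

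Next I would dispose of the case $Q\equiv 0$. Then every zero of $f$ has multiplicity $\geq k$, so at such a point $z_{0}$ the equations $f^{(i)}(z_{0})=0$ $(0\le i\le k-1)$ say that the non-zero vector $(c_{j}e^{\omega_{j}z_{0}})_{j\in I}$ lies in the kernel of the $k\times t$ Vandermonde-type matrix $(\omega_{j}^{\,i})$; since $t\le k$ and the $\omega_{j}$ are distinct this matrix is injective, a contradiction. Hence $f$ has no zeros, so by Hadamard's theorem $f=Ae^{\lambda z}$, and $f^{(k)}\equiv f$ forces $\lambda^{k}=1$ — case (2) when $k=2$, case (3) when $k\geq 3$. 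So from now on $Q\not\equiv 0$; set $F=f-Q$, so that $F^{(k)}-F=Q-Q^{(k)}=:R$ with $\deg R=\deg Q$ and $R\not\equiv 0$. From the explicit shape of $f$ one checks that $F$ neither omits the value $0$ nor equals a polynomial times $e^{\alpha z+\beta}$, so $F$ has infinitely many zeros, each of multiplicity $\geq k$ by hypothesis.

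The core is the analysis of those multiple zeros. At a zero $z_{0}$ of $F$ of multiplicity $\geq k$, the identities $f^{(i)}(z_{0})=Q^{(i)}(z_{0})$ $(0\le i\le k-1)$ form the linear system $\sum_{j\in I}\omega_{j}^{\,i}x_{j}=Q^{(i)}(z_{0})$ in the unknowns $x_{j}=c_{j}e^{\omega_{j}z_{0}}\neq 0$. If $t<k$ the system is over-determined; its consistency imposes $k-t$ polynomial identities in $z_{0}$, and if any of them is not identically zero there are only finitely many multiple zeros, contradicting the previous paragraph. If instead all these identities hold, one may write $(Q,Q',\dots,Q^{(k-1)})=\sum_{j\in I}\xi_{j}\,(1,\omega_{j},\dots,\omega_{j}^{k-1})$ with polynomial coefficients $\xi_{j}$; differentiating and applying a $(k-1)\times t$ Vandermonde argument to $\xi_{j}'-\omega_{j}\xi_{j}$ forces $\xi_{j}'=\omega_{j}\xi_{j}$, hence $\xi_{j}\equiv 0$ and $Q\equiv 0$, again impossible. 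If $t=k$ the system is square and invertible, so $x_{j}=p_{j}(z_{0})$ for fixed polynomials $p_{j}$, not identically zero (else there are no multiple zeros). The key observation is that $\sum_{j\in I}\omega_{j}=0$ (the sum of all $k$-th roots of unity, $k\geq 2$), whence $\prod_{j\in I}x_{j}=\prod_{j\in I}c_{j}$ is a constant; thus $\prod_{j}p_{j}(z_{0})$ equals that constant at infinitely many points, so $\prod_{j}p_{j}$ — and therefore each $p_{j}$ — is constant, i.e. $e^{\omega_{j}z_{0}}$ is a fixed constant for every $j\in I$. For $k\geq 3$ there is a non-real $k$-th root of unity $\omega_{j_{1}}$; combining $e^{z_{0}}=\mathrm{const}$ and $e^{\omega_{j_{1}}z_{0}}=\mathrm{const}$ allows only finitely many $z_{0}$ (two arithmetic progressions with $\mathbb{R}$-linearly independent steps meet in at most one point), contradicting the existence of infinitely many multiple zeros. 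Hence for $k\geq 3$ the case $Q\not\equiv 0$ is impossible, leaving case (3). For $k=2$, the $t<k$ analysis already rules out $t=1$, so $f=c_{0}e^{z}+c_{1}e^{-z}$ with $c_{0}c_{1}\neq 0$; at a multiple zero $z_{0}$ of $f-Q$ one computes $c_{0}e^{z_{0}}=\tfrac12(Q+Q')(z_{0})$, $c_{1}e^{-z_{0}}=\tfrac12(Q-Q')(z_{0})$, whence $Q(z_{0})^{2}-Q'(z_{0})^{2}=4c_{0}c_{1}$; since the left side is a polynomial of degree $2\deg Q$, having infinitely many such $z_{0}$ forces $\deg Q=0$, say $Q\equiv a\neq 0$, and then $a^{2}=4c_{0}c_{1}$ — exactly case (2).

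The step I expect to be the main obstacle is the case $t=k$ with $k\geq 3$: one must extract from ``every zero of $f-Q$ is multiple'' enough rigidity to reach a contradiction, and the clean route is the product identity coming from $\sum_{j}\omega_{j}=0$ together with the arithmetic/lattice argument using a non-real root of unity. The remaining points — that $F=f-Q$ has infinitely many zeros, the precise behaviour of the over-determined systems when $t<k$, and the degree bookkeeping for $Q^{2}-(Q')^{2}$ — are routine but need care.
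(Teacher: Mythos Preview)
The paper does not prove this lemma at all; it is quoted verbatim as \cite[Theorem 1.2]{LY} (L\"u--Yi) and used as a black box in the proofs of Theorems \ref{t1.1}--\ref{t1.5}. So there is no in-paper argument to compare your proposal against.

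That said, your argument is correct. The scheme --- write $f=\sum_{j\in I}c_j e^{\omega_j z}$ from the ODE, then exploit Vandermonde rigidity at a $k$-fold zero of $f-Q$ --- is the natural one, and the two key observations you isolate are exactly right: (i) when $t=|I|=k$ the identity $\sum_j\omega_j=0$ makes $\prod_j c_je^{\omega_j z_0}$ independent of $z_0$, forcing each Cramer polynomial $p_j$ to be constant; (ii) for $k\ge 3$ the existence of a non-real $\omega_{j_1}$ then traps the infinitely many multiple zeros in the intersection of two rank-one cosets with $\mathbb{R}$-independent steps, which is at most a single point. The over-determined case $t<k$ is handled cleanly by your differentiation trick $\xi_j'=\omega_j\xi_j\Rightarrow\xi_j\equiv0$. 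Two small remarks: the claim that $F=f-Q$ has infinitely many zeros deserves one explicit line (Hadamard plus Borel's lemma on exponential sums with polynomial coefficients, using $Q\not\equiv0$), and your separate treatment of $k=2$ at the end is slightly redundant, since once $p_0=\tfrac12(Q+Q')$ and $p_1=\tfrac12(Q-Q')$ are both constant you already have $Q\equiv a$ and $c_0c_1=p_0p_1=a^2/4$ without recomputing $Q^2-(Q')^2$.
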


\section {{\bf Proofs of the main results}} 

\begin{proof}[{\bf Proof of Theorem \ref{t1.1}}]
By the given conditions, we have $(i)$ all the zeros of $f(z)-a$ have multiplicity at least $k$, where $k$ is a positive integer, $(ii)$ $f(z)=a\mapsto f^{(k)}(z)=a$, $(iii)$ $f^{(k)}(z)=a\Rightarrow  f(z)=a$ and $(iv)$ $f^{(k)}(z)=b\Rightarrow f(z)=b$. Since $a\neq 0$ and all the zeros of $f(z)-a$ have multiplicity at least $k$, we can deduce that all the zeros of $f(z)-a$ have multiplicity exactly $k$.

We now claim that $f$ is a transcendental entire function. If not, suppose that $f$ is a non-constant polynomial of degree $m$. Suppose that 
\begin{align*}
f(z)-a=a_m (z-z_1)^{m_1}(z-z_2)^{m_2}\ldots (z-z_l)^{m_l},
\end{align*} 
where $m_1+m_2+\ldots+m_l=m$ and $a_m\not=0$ is a constant. Note that $\deg(f^{(k)})=m-k$. Since $f(z)=a\mapsto f^{(k)}(z)=a$ and $f^{(k)}(z)=a\Rightarrow f(z)=a$, we may assume that 
\begin{align*} 
f^{(k)}(z)-a=b_m (z-z_1)^{n_1}(z-z_2)^{n_2}\ldots (z-z_l)^{n_l},
\end{align*} 
where $n_1+n_2+\ldots+n_l=m-k$ and $b_m\not=0$ is a constant. Again since $f(z)=a\mapsto f^{(k)}(z)=a$, we must have $n_i\geq m_i$ for $i=1,2,\ldots,l$ and so $n_1+n_2+\ldots+n_l\geq m$. Therefore we get a contradiction. Hence $f$ is a transcendental entire function.

Now we prove that $\rho(f)\leq 1$.
Set $\mathcal{F}=\{f_{\omega}\}$, where $f_{\omega}(z)=f(\omega+z)$, $\omega\in\mathbb{C}$.
Then $\mathcal{F}$ is a family of holomorphic functions on the unit disc $\Delta$. Now for each $f_{\omega}\in\mathcal{F}$, we have 
\begin{enumerate}
\item[(1)] all the zeros of $f_{\omega}(z)-a$ have multiplicity at least $k$,
\item[(2)] $f_{\omega}(z)=a\Rightarrow f^{(k)}_{\omega}(z)=a$,
\item[(3)] $f^{(k)}_{\omega}(z)=b\Rightarrow f_{\omega}(z)=b$.
\end{enumerate}
 
Clearly by Lemma \ref{L.4}, we see that $\mathcal{F}$ is normal in $\Delta$ and so by
Lemma \ref{L.1}, there exists $M >0$ satisfying $f^{\#}(\omega)=f^{\#}_{\omega}(0)<M$ for all $\omega\in\mathbb{C}$. Consequently, by Lemma \ref{L.3}, we have $\rho(f)\leq 1$.
We put 
\begin{align}
\label{sm.1} \varphi(z)= \frac{f^{(k+1)}(z)(f^{(k)}(z)-f(z))}{(f(z)-a)(f^{(k)}(z)-b)}.
\end{align}

Now we consider the following two cases.\par

\medskip
{\bf Case 1.} Let $\varphi\equiv 0$. Then either $f(z)\equiv f^{(k)}(z)$ or $f^{(k+1)}(z)\equiv 0$. Since $f(z)$ is a transcendental entire function, we have $f^{(k+1)}(z)\not\equiv 0$. Hence $f(z)\equiv f^{(k)}(z)$. Consequently, by Lemma \ref{LL.3}, we see that one of the following cases must occur:
\begin{enumerate}
\item[(1)] $k=1$ and $f(z)=A\exp(z)$, where $A\in\mathbb{C}\backslash \{0\}$,
\item[(2)] $k=2$ and $f(z)=c_0\exp(z)+c_1\exp(-z)$, where $c_0, c_1\in\mathbb{C}\backslash \{0\}$ such that $c_0c_1=\frac{a^2}{4}$.
\end{enumerate}

\medskip
{\bf Case 2.} Let $\varphi\not\equiv 0$. Using given conditions, one can easily prove that $\varphi$ is an entire function.
Now from (\ref{sm.1}), we have 
\begin{align}
\label{sm.2} \varphi(z)= \frac{f^{(k)}(z)}{f(z)-a}\frac{f^{(k+1)}(z)}{f^{(k)}(z)-b}-\frac{f^{(k+1)}(z)}{f^{(k)}(z)-b}-a\frac{f^{(k)}(z)}{f(z)-a}\frac{f^{(k+1)}(z)}{(f^{(k)}(z)-b)f^{(k)}(z)}.
\end{align}

Therefore using Lemma \ref{LL.1} to (\ref{sm.2}), we get
\begin{align*}
 T(r,\varphi)=m(r,\varphi)&\leq m\left(r,\frac{f^{(k)}}{f-a}\right)+m\left(r,\frac{f^{(k_1)}}{f^{(k)}-b}\right)+m\left(r,\frac{f^{(k+1)}}{f^{(k)}-b}\right)\\&+m\left(r,\frac{f^{(k)}}{f-a}\right)+m\left(r,\frac{f^{(k+1)}}{(f^{(k)}-b)f^{(k)}}\right)+O(1)=o(\log r)\end{align*}
as $r\to\infty$ and so $\varphi$ is a constant. Suppose $\varphi=c_1\in\mathbb{C}\backslash \{0\}$. Now from (\ref{sm.1}), we have
\begin{align}
\label{sm.2} c_1=\frac{f^{(k+1)}(z)(f^{(k)}(z)-f(z))}{(f(z)-a)(f^{(k)}(z)-b)}.
\end{align}

Now we consider the following two sub-cases.\par

\medskip
{\bf Sub-case 2.1.} Suppose $f(z)-a$ has only finitely many zeros. Then by Hadamard's factorization theorem, we can assume that 
\begin{align}
\label{sm.1a} f(z)-a=P(z)\exp(Q(z)),
\end{align}
where $P(z)(\not\equiv 0)$ and $Q(z)$ are polynomials such that $\deg(Q)=1$. Now differentiating $k$-times, we get
\begin{align}
\label{sm.1b} f^{(k)}(z)=\left(P(z)(Q^{(1)}(z))^k+P_{1}(z)\right)\exp(Q(z)),
\end{align}
where $P_1(z)$ is a polynomial such that $\deg(P_1)<\deg\left(P(Q^{(1)})^k\right)$.
Clearly $f^{(k)}(z)-b$ has infinitely many zeros. Let $\{z_n\}$ be a sequence of zeros of $f^{(k)}(z)-b$ such that $P(z_n)\neq 0$ and \begin{align*}
P(z_n)(Q^{(1)}(z_n))^k+P_{1}(z_n)\neq 0
\end{align*}
for all $n\in\mathbb{N}$. Then $f^{(1)}(z_n)=b$. Since $f^{(1)}(z)=b\Rightarrow f(z)=b$, we have $f(z_n)=b$. Therefore from (\ref{sm.1a}) and (\ref{sm.1b}), we have respectively 
\begin{align*}
P(z_n)\exp(Q(z_n))=b-a\;\;\text{and}\;\;\left(P(z_n)(Q^{(1)}(z_n))^k+P_{1}(z_n)\right)\exp(Q(z_n))=b.
\end{align*} 

Eliminating $\exp(Q(z_n))$, we get 
\begin{align*}
\frac{P(z_n)(Q^{(1)}(z_n))^k+P_{1}(z_n)}{P(z_n)}=\frac{b}{b-a}
\end{align*}
for all $n\in\mathbb{N}$, which shows that 
\begin{align*}
\frac{P(z)(Q^{(1)}(z))^k+P_{1}(z)}{P(z)}\equiv \frac{b}{b-a},
\end{align*}
i.e.,
\begin{align}
\label{sm.1c} P(z)\left((Q^{(1)}(z))^k-\frac{b}{b-a}\right)+P_{1}(z)\equiv 0.
\end{align}

Clearly from (\ref{sm.1c}), we get that $P(z)$ is a constant and $(Q^{(1)}(z))^k=\frac{b}{b-a}$. Therefore 
\begin{align*}
f(z)=C\exp\left(\lambda z\right)+a,
\end{align*}
where $C$ is a non-zero constant and $\lambda^k=\frac{b}{b-a}\neq 1$. Since $f^{(k)}(z)=a\Rightarrow  f(z)=a$, we get a contradiction.\par

\medskip
{\bf Sub-case 2.2.} Suppose $f(z)-a$ has infinitely many zeros. We know that all the zeros of $f(z)-a$ have multiplicity exactly $k$. Let $z_0$ be a zero of $f(z)-a$ of multiplicity $k$. Since $f(z)=a\mapsto f^{(k)}(z)=a$ and $f^{(k)}(z)=a\Rightarrow f(z)=a$, we can say that $z_0$ is also a zero of $f^{(k)}(z)-a$ of multiplicity $q_0$, where $q_0\geq k$. Then by simple calculations, one can easily deduce from (\ref{sm.1}) that $q_0=k$. Consequently $f(z)=a\rightleftharpoons f^{(k)}(z)=a$, i.e., $f$ and $f^{(k)}$ share $a$ CM. Then by Lemma \ref{LL.2}, we have
\begin{align}
\label{sm.4} f^{(k)}(z)-a=\lambda (f(z)-a),
\end{align}
where $\lambda $ is a non-zero constant. Clearly $\lambda\not\equiv 1$. If possible, suppose $0$ is not a Picard exceptional value of $f^{(k)}(z)-b$. Let $z_0$ be a zero of $f^{(k)}(z)-b$. By the assumption, we have $f(z_0)=b$. Putting $z=z_0$ into (\ref{sm.4}), we get $b-a=\lambda (b-a)$ and so $\lambda =1$, which is impossible. Hence $0$ is a Picard exceptional value of $f^{(k)}(z)-b$. Then by Hadamard's factorization theorem, we can assume that 
\begin{align}
\label{sm.5} f^{(k)}(z)-b=\exp(Q(z)),
\end{align}
where $Q(z)$ is polynomial such that $\deg(Q)=1$.  Now from (\ref{sm.4}) and (\ref{sm.5}), we get
\begin{align}
\label{sm.6} f(z)=a_1+a_2\exp(Q(z)),
\end{align}
where $a_1=\frac{b-a+a\lambda}{\lambda}$ and $a_2=\frac{1}{\lambda}$. Differentiating (\ref{sm.6}) $k$-times, we get
\begin{align}\label{sm.7} f^{(k)}(z)=a_2(Q^{(1)}(z))^k\exp(Q(z)).
\end{align} 

Now from (\ref{sm.5}) and (\ref{sm.7}), we have
\begin{align*}
 \left(a_2(Q^{(1)}(z))^k-1\right)\exp(Q(z))=b,
 \end{align*}
which is impossible since $b\neq 0$. 
\end{proof}

\begin{proof}[{\bf Proof of Theorem \ref{t1.2}}]
By the given conditions, we have $(i)$ all the zeros of $f(z)-a$ have multiplicity at least $k$, where $k$ is a positive integer, $(ii)$ $f(z)=a\rightleftharpoons f^{(k)}(z)=a$ and $(iii)$ $f^{(k)}(z)=0\Rightarrow f(z)=0$. Obviously all the zeros of $f(z)-a$ have multiplicity exactly $k$. Again the conditions $f(z)=a\rightleftharpoons f^{(k)}(z)=a$ imply that $f(z)$ is a transcendental entire function and so $f^{(k)}(z)\not\equiv 0$. Proceeding in the same way as done in the proof Theorem \ref{t1.1}, we can easily prove that $\rho(f)\leq 1$.

Since $f(z)=a\rightleftharpoons f^{(k)}(z)=a$, by Lemma \ref{LL.2}, we have
\bea\label{smm.1} f^{(k)}(z)-a=\beta (f(z)-a),\eea
where $\beta$ is a non-zero constant.

\medskip
First we assume that $\beta=1$. Then from (\ref{smm.1}), we have $f(z)\equiv f^{(k)}(z)$ and so by Lemma \ref{LL.3}, we see that one of the following cases must occur:
\begin{enumerate}
\item[(1)] $k=1$ and $f(z)=A\exp(z)$, where $A\in\mathbb{C}\backslash \{0\}$,
\item[(2)] $k=2$ and $f(z)=c_0\exp(z)+c_1\exp(-z)$, where $c_0, c_1\in\mathbb{C}\backslash \{0\}$ such that $c_0c_1=\frac{a^2}{4}$.
\end{enumerate}

\medskip
Next we assume that $\beta\neq 1$. Now we consider the following two cases.\par

\medskip
{\bf Case 1.} Suppose $f(z) - a$ has only finitely many zeros. Then by Hadamard's factorization theorem, we may assume that
$f(z)-a=P(z)\exp(Q(z))$, where $P(z)(\not\equiv 0)$ and $Q(z)$ are polynomials such that $\deg(Q)=1$.
Now using (\ref{sm.1a}) and (\ref{sm.1b}) to (\ref{smm.1}), we get
\begin{align*}
\left(P(z)\left((Q^{(1)}(z))^k-\beta\right)+P_1(z)\right)\exp(Q(z))=a.
\end{align*}

Proceeding in the same manner as in the proof of Sub-case 2.1 of Theorem \ref{t1.1}, we obtain a contradiction.

\medskip
{\bf Case 2.} Suppose $f(z)-a$ has infinitely many zeros. If possible, suppose $0$ is not a Picard exceptional value of $f^{(k)}(z)$. Let $z_0$ be a zero of $f^{(k)}(z)$. By the assumption, we have $f(z_0)=0$. Putting $z=z_0$ into (\ref{smm.1}), we get $0-a=\beta (0-a)$ and so $\beta=1$, which is impossible. Hence $0$ is a Picard exceptional value of $f^{(k)}(z)$. Then by Hadamard's factorization theorem, we can assume that 
\begin{align}
\label{smm.2} f^{(k)}(z)=\exp(Q(z)),
\end{align}
where $Q(z)$ is polynomial such that $\deg(Q)=1$. Now from (\ref{smm.1}) and (\ref{smm.2}), we get
\begin{align}
\label{smm.3} f(z)=a_1+a_2\exp(Q(z)),
\end{align}
where $a_1=\frac{-a+a\beta}{\beta}$ and $a_2=\frac{1}{\beta}$. Differentiating (\ref{smm.3}) $k$-times, we get
\begin{align}
\label{smm.4} f^{(k)}(z)=a_2(Q^{(1)}(z))^k\exp(Q(z)).
\end{align}

Now from (\ref{smm.2}) and (\ref{smm.4}), we have $a_2(Q^{(1)}(z))^k=1$, i.e., $(Q^{(1)}(z))^k=\beta$.
Finally, we have
\begin{align*}
f(z)=\frac{a(\beta-1)}{\beta}+\frac{C}{\beta}\exp(\lambda z),
\end{align*}
where $C$, $\beta(\neq 1)$ and $\lambda$ are non-zero constants such that $\lambda^k=\beta$.
\end{proof}

\begin{proof}[{\bf Proof of Theorem \ref{t1.3}}]
By the given conditions, we have $(i)$ either $f(z)-a$ has no zeros or all the zeros of $f(z)-a$ have multiplicity at least $k$, where $k$ is a positive integer, $(ii)$ $f(z)=a\mapsto f^{(k)}(z)=a$ and $(iii)$ $f^{(k)}(z)=b\Rightarrow f(z)=b$.  Now using Lemma \ref{L.5} and proceeding in the same way as done in the proof Theorem \ref{t1.1}, we can easily prove that $\rho(f)\leq 1$.
We consider the following two cases.\par
\smallskip

{\bf Case 1.} Suppose $f(z)-a$ has no zeros. Then by Hadamard's factorization theorem, we assume that 
\begin{align}
\label{k.15} f(z)-a=d_1\exp(c z),
\end{align}
where $c$ and $d_1$ are constants. Since $f(z)$ is non-constant we have $cd_1\neq 0$.
Also from (\ref{k.15}), we get
\begin{align}
\label{k.16} f^{(k)}(z)-b=d_1c^k\exp(c z)-b \;\text{and}\;f(z)-b=d_1\exp(c z)+a-b.
\end{align}

Let $z_2$ be a zero of $f^{(k)}(z)-b$. Since $f^{(k)}(z)=b\Rightarrow f(z)=b$, it follows that $z_2$ is a zero of $f(z)-b$. Then from (\ref{k.16}), we have 
\begin{align}
\label{k.17} d_1c^k\exp(c z_2)=b \;\text{and}\;d_1\exp(c z_2)=b-a.
\end{align}

Now eliminating $\exp(cz_2)$ from (\ref{k.17}), we get $c^k=\frac{b}{b-a}$. Since $a\neq b$, we have $c^k\neq 1$. Therefore
\begin{align*}
f(z)=d_1\exp(c z)+a,
\end{align*}
where $c, d_1\in\mathbb{C}\backslash \{0\}$ such that $c^k=\frac{b}{b-a}\neq 1$.\par
\smallskip

{\bf Case 2.} All the zeros of $f(z)-a$ have multiplicity at least $k$. Since $a\neq 0$ and all the zeros of $f(z)-a$ have multiplicity at least $k$, we can deduce that all the zeros of $f(z)-a$ have multiplicity exactly $k$.
Now we prove that $f^{(k)}(z)\not\equiv b$. If not, suppose that $f^{(k)}(z)\equiv b$. On integration, we have 
\begin{align*}
f(z)=bz^k+b_{k-1}z^{k-1}+\ldots+b_0,
\end{align*}
where $b_i\in\mathbb{C}$ for $i=0,1,\ldots,k-1$. Let $z_1$ be a zero of $f(z)-a$. Since $f(z)=a\mapsto f^{(k)}(z)=a$, it follows that $f^{(k)}(z_1)=a$. On the other hand we have $f^{(k)}(z_1)=b$ and so $a=b$, which is impossible. Hence $f^{(k)}(z)\not\equiv b$.
We assume the same auxiliary function $\varphi$ defined as in (\ref{sm.1}).

Now we consider the following two sub-cases.\par

\medskip
{\bf Case 2.1.} Let $\varphi\equiv 0$. Then either $f(z)\equiv f^{(k)}(z)$ or $f^{(k+1)}(z)\equiv 0$. Suppose that $f^{(k+1)}(z)\equiv 0$. On integration, we have 
\begin{align*}
f(z)=a_kz^k+a_{k-1}z^{k-1}+\ldots+a_0,
\end{align*}
where $a_k,\ldots, a_0\in\mathbb{C}$ and at least one of $a_k,\ldots, a_1\in\mathbb{C}$ is non-zero. Since all the zeros of $f-a$ have multiplicity at least $k$, we assume that 
\[f(z)-a=a_k(z-z_1)^k,\]
where $z_1\in\mathbb{C}$. Note that $f^{(k)}(z)=k! a_k$. Since $f(z)=a\mapsto f^{(k)}(z)=a$, it follows that $a=k! a_k$. Therefore 
\begin{align*}
f(z)=\frac{a}{k!}(z-z_1)^k+a.
\end{align*}

Consequently, by Lemma \ref{LL.3}, we see that one of the following cases must occur:
\begin{enumerate}
\item[(1)] $k=1$ and $f(z)=A\exp(z)$, where $A\in\mathbb{C}\backslash \{0\}$,
\item[(2)] $k=2$ and $f(z)=c_0\exp(z)+c_1\exp(-z)$, where $c_0, c_1\in\mathbb{C}\backslash \{0\}$ such that $c_0c_1=\frac{a^2}{4}$,
\item[(3)] $f(z)=\frac{a}{k!}(z-z_1)^k+a$.
\end{enumerate}

\medskip
{\bf Case 2.2.} Let $\varphi\not\equiv 0$. Then $f(z)\not \equiv f^{(k)}(z)$ and $f^{(k+1)}(z)\not\equiv 0$. Also by the given conditions, one can easily prove that $\varphi$ is an entire function. Now proceeding in the same way as done in the proof of Case 2 of Theorem \ref{t1.1}, we can prove that $\varphi$ is a non-zero constant and so we arrive at (\ref{sm.2}).

\medskip
Denote by $N(r,0; f\mid g\not=a_0)$ the counting function of those zeros of $f$ which are not the $a_0$-points of $g$.

Immediately from (\ref{sm.2}), we have 
\begin{align}
\label{k.4} N\big(r,0;f^{(k+1)}\mid f^{(k)}\neq b\big)=0.
\end{align}

We consider the following two sub-cases.\par

\medskip
{\bf Sub-case 2.2.1.} Let $k=1$. By the given conditions, we have $(i)$ $f(z)=a\Rightarrow f^{(1)}(z)=a$ and $(ii)$ $f^{(1)}(z)=b\Rightarrow f(z)=b$. Consequently, by Theorem 1.E, we get the required conclusions.\par

\medskip
{\bf Sub-case 2.2.2.} Let $k\geq 2$. Let $z_0$ be a zero of $f(z)-a$ of multiplicity $k\geq 2$. Since $f(z)=a\mapsto f^{(k)}(z)=a$, $z_0$ is also a zero of $f^{(k)}(z)-a$ of multiplicity $q_0\geq k\geq 2$. Clearly $z_0$ is a zero of $f^{(k+1)}(z)$ of multiplicity $q_0-1\geq 1$. On the other hand from (\ref{k.4}), we have $N\left(r,0;f^{(k+1)}\mid f^{(k)}\neq b\right)=0$. Therefore we get a contradiction. Hence $a$ is a Picard exceptional value of $f$, which is impossible.
\end{proof}

\begin{proof}[{\bf Proof of Theorem \ref{t1.4}}]
By the given conditions, we have $(i)$ all the zeros of $f(z)-a$ have multiplicity at least $k$, where $k$ is a positive integer, $(ii)$ $f(z)=a\Rightarrow f^{(k)}(z)=a$ and $(iii)$ $f^{(k)}(z)=0\mapsto f(z)=0$. Since $a\neq 0$ and all the zeros of $f(z)-a$ have multiplicity at least $k$, we get that all the zeros of $f(z)-a$ have multiplicity exactly $k$. Also by the given condition, we have $\lambda(f)<\rho(f)$ and so $f(z)$ is a transcendental entire function. Clearly $f^{(k)}(z)\not\equiv 0$. Now using Lemma \ref{L.6} and then proceeding in the same way as done in the proof Theorem \ref{t1.1}, we can easily prove that $\rho(f)\leq 1$.
Therefore, we have $0\leq \lambda(f)<\rho(f)\leq 1$. If $0<\rho(f)<1$, then by Lemma 4.10.1 \cite{Hol}, we get $\rho(f)=\lambda(f)$, which is impossible. Hence $\rho(f)=1$. Again since $f^{(k)}(z)=0\mapsto f(z)=0$, we get
\begin{align}
\label{k2.1} N(r,0;f^{(k)}\mid f\neq 0)=0.
\end{align}

Let 
\begin{align}
\label{k2.2} H(z)=\frac{f(z)}{f^{(k)}(z)}.
\end{align}

Let $z_1$ be a zero of $f^{(k)}(z)$ of multiplicity $q_1$. Since $f^{(k)}(z)=0\mapsto f(z)=0$, $z_1$ is also a zero of $f(z)$ of multiplicity $p_1$. Clearly $q_1\leq p_1$ and so from (\ref{k2.2}), we see that $z_1$ is a zero of $H(z)$ of multiplicity $p_1-q_1$. Therefore, it is clear that
\begin{align}
\label{k2.3} N(r,0;H)\leq N(r,0;f).
\end{align}

Clearly from (\ref{k2.1}) and (\ref{k2.2}), one can easily conclude that $H(z)$ is an entire function.

We consider the following two cases.\par

\medskip
{\bf Case 1.} Let $H(z)$ be a constant. Clearly $H(z)$ is a non-zero constant. Then from (\ref{k2.3}), we get $\ol N(r,0;f)=0$. If $\ol N(r,a;f)=0$, then by second fundamental theorem, we get a contradiction. Hence $\ol N(r,a;f)\neq 0$. Let $z_1$ be a zero of $f(z)-a$. Since $f(z)=a\Rightarrow f^{(k)}(z)=a$, we have $f^{(k)}(z_1)=a$ and so from (\ref{k2.2}), we get $H(z)=1$, which implies that 
\begin{align}
\label{k2.3a} f(z)\equiv f^{(k)}(z).
\end{align}

Since $\lambda(f)<\rho(f)=1$, we conclude that $0$ and $\infty$ are the Borel exceptional values of $f(z)$. Then by Theorem 2.12 \cite{10}, we have 
\begin{align}
\label{k2.3b} \delta(0;f)=\delta(\infty;f)=1.
\end{align}

Now we consider the  following two sub-cases.\par

\medskip
{\bf Sub-case 1.1.} Let $k=1$. Now using Lemma \ref{LL.3} to (\ref{k2.3a}), we have $f(z)=A\exp(z)$, where $A$ is a non-zero constant.\par

\medskip 
{\bf Sub-case 1.2.} Let $k\geq 2$. Now using Lemma \ref{LL.3} to (\ref{k2.3a}), we have
\begin{align}
\label{k2.3c}f(z)=c_0\exp(z)+c_1\exp(-z)=\frac{c_0\exp(2z)+c_1}{\exp(z)},
\end{align}
where $c_0, c_1\in\mathbb{C}\backslash \{0\}$ such that $c_0c_1=a^2/4$.
Clearly from (\ref{k2.3c}), we have 
\begin{align*}
T(r,f)=2T(r,\exp(z))+S(r,\exp(z)).
\end{align*}

Since $\exp(z)\neq 0,\infty$, we get 
\begin{align*}
N(r,\omega;\exp(z))\sim T(r,\exp(z)),
\end{align*}
where $\omega\in S=\{z: c_0z^2+c_1=0\}$. Therefore
\begin{align*} 
\delta(0;f)= 1- \limsup\limits_{r\lra \infty}\frac{N(r,0;f)}{T(r,f)}
&=1- \limsup\limits_{r\lra \infty}\frac{\sum\limits_{\omega\in S}N(r,\omega;\exp(z))}{2T(r,\exp(z))+S(r,\exp(z))}
\\&=1- \limsup\limits_{r\lra \infty}\frac{2T(r,\exp(z))}{2T(r,\exp(z))+S(r,\exp(z))}=0,
\end{align*}
which contradicts (\ref{k2.3b}).

\medskip
{\bf Case 2.} Let $H(z)$ be non-constant.
We now consider the following two sub-cases.\par

\medskip
{\bf Sub-case 2.1.} Suppose $f(z)$ has only finitely many zeros. Then by Hadamard's factorization theorem, we assume that $f(z)=P(z)\exp(cz)$,
where $P(z)$ is a non-zero polynomial and $c$ is a non-zero constant. Note that $f^{(k)}(z)=P_k(z)\exp(cz)$, where $P_k(z)$ is a non-zero polynomial. Then from (\ref{k2.2}), we see that $H(z)$ is a non-constant polynomial. Note that 
\begin{align*}
H(z)-1=\frac{f(z)-f^{(k)}(z)}{f^{(k)}(z)}.
\end{align*}

Since $f(z)=a\Rightarrow f^{(k)}(z)=a$, it follows that 
$\ol N(r,a;f)\leq N(r,0;H-1)\leq T(r,H)=O(\log r)$. 
Therefore by the second fundamental theorem, we have
\begin{align*}
T(r,f)\leq \ol N(r,f)+\ol N(r,0;f)+\ol N(r,a;f)+S(r,f)=O(\log r)+S(r,f)=S(r,f),
\end{align*}
which is impossible.\par

\medskip
{\bf Sub-case 2.2.} Suppose $f(z)$ has infinitely many zeros. Note that $\lambda(f)<\rho(f)$.
This shows that $0$ is a Borel exceptional value of $f(z)$. Note that $\infty$ is a Picard exceptional value of $f(z)$. Since $\rho(f)>0$, it follows that $\infty$ is also a Borel exceptional value of $f(z)$. Then by Theorem 2.11 \cite{10}, we get $\mu(f)=\rho(f)=1$. Let
\begin{align}
\label{k2.4} f(z)=\mathscr{G}(z)\exp(cz),
\end{align}
where $\mathscr{G}(z)$ is the canonical products formed with the zeros of $f(z)$.
By Ash \cite[Theorem 4.3.6]{RBA}, we have 
\begin{align}
\label{k2.5}\lambda\left(\mathscr{G}\right)=\rho\left(\mathscr{G}\right)=\lambda(f).
\end{align}

Now from (\ref{k2.3}) and (\ref{k2.4}), we get $N(r,0;H)\leq N(r,0;f)=N(r,0;\mathscr{G})$.
Since the exponent of convergence of the zeros of an integral function is not increased by removing some of the terms, we have $\rho(H)=\lambda(H)\leq \lambda(\mathscr{G})$. Since $\lambda(f)<\rho(f)$, from (\ref{k2.5}), we get $\rho(H)<\rho(f)$. Again since $\mu(f)=\rho(f)$, it follows that $\rho(H)<\mu(f)=1$. Then by Theorem 1.18 \cite{10}, we have $T(r,H)=o(T(r,f))\;(r\to \infty )$,  i.e., $T(r,H)=S(r,f)$, i.e., $H(z)$ is a small function of $f(z)$.
Now (\ref{k2.3}) yields 
\begin{align*}
\ol N(r,0;f)\leq N(r,0;H)\leq T(r,H)=S(r,f).
\end{align*}

Note that $H(z)-1=\frac{f(z)-f^{(k)}(z)}{f^{(k)}(z)}$.
Since $f(z)=a\Rightarrow f^{(k)}(z)=a$, we get 
\begin{align*}
\ol N(r,a;f)\leq N(r,0;H-1)\leq T(r,H)=S(r,f).
\end{align*}

Since $\ol N(r,0;f)+\ol N(r,a;f)=S(r,f)$, by the second fundamental theorem, we get a contradiction.
\end{proof}

\begin{proof}[{\bf Proof of Theorem \ref{t1.5}}]
It is clear that $f(z)$ is a transcendental entire function and so $f^{(k)}(z)\not\equiv 0$.
Now we prove that $\rho(f)=1$. Set $\mathcal{F}=\{f_{\omega}\}$, where $f_{\omega}(z)=f(\omega+z)$, $\omega\in\mathbb{C}$.
Then $\mathcal{F}$ is a family of holomorphic functions on the unit disc $\Delta$. Note that for each $f_{\omega}\in\mathcal{F}$, we have 
\begin{enumerate}
\item[(1)] either $f(z)$ has no zeros or all the zeros of $f_{\omega}(z)$ have multiplicity at least $k$,
\item[(2)] $f_{\omega}(z)=a\Rightarrow f^{(k)}_{\omega}(z)=a$, 
\item[(3)] $f^{(k)}_{\omega}(z)=0\Rightarrow f_{\omega}(z)=0$. 
\end{enumerate}

\medskip
Then $\mathcal{F}$ is normal in $\Delta$ if we use Lemma \ref{L.7}. Now by Lemma \ref{L.1}, we have $f^{\#}(\omega)<M$ for all $\omega\in\mathbb{C}$, where $M > 0$. Hence using Lemma \ref{L.3}, we get $\rho(f)\leq 1$. Also it is easy to prove that $\rho(f)=1$. Since $f^{(k)}(z)=0\Rightarrow f(z)=0$, we have $N(r,0;f^{(k)}\mid f\neq 0)=0$. We consider the auxiliary function $H(z)$ defined as in (\ref{k2.2}).

Let $z_1$ be a zero of $f^{(k)}(z)$ of multiplicity $q_1$. Since $f^{(k)}(z)=0\Rightarrow f(z)=0$, we see that $z_1$ is also a zero of $f(z)$ of multiplicity $p_1$. Clearly $p_1\geq k$ as the zeros of $f(z)$ have multiplicity at least $k$.
Since $f^{(k)}(z_1)=0$ and $z_1$ is a zero of $f(z)$ of multiplicity $p_1(\geq k)$, it follows that $p_1\geq k+1$. Therefore from (\ref{k2.2}), we get that $z_1$ is a zero of $H(z)$ of multiplicity $k$ and so all zeros of $H(z)$ have multiplicity exactly $k$. It is clear that $f(z)=0\Leftrightarrow H(z)=0$ and so $N(r,0;H)\leq N(r,0;f)$.
Hence from (\ref{k2.1}) and (\ref{k2.2}), we conclude that $H(z)$ is an entire function.
We consider the following two cases.\par

\medskip
{\bf Case 1.} Let $H(z)$ be a constant. Then from (\ref{k2.3}), we get $\ol N(r,0;f)=0$. Obviously $\ol N(r,a;f)\neq 0$. Let $z_1$ be a zero of $f(z)-a$. Since $f(z)=a\Rightarrow f^{(k)}(z)=a$, we have $f^{(k)}(z_1)=a$ and so from (\ref{k2.2}), we get $H(z)=1$, which implies that 
\bea\label{kk2.3a} f(z)\equiv f^{(k)}(z).\eea 

Now using Lemma \ref{LL.3} to (\ref{kk2.3a}), we have $f(z)=A\exp(\lambda z)$, where $A$ and $\lambda$ are non-zero constants
 such that $\lambda^k=1$.\par

\medskip
{\bf Case 2.} Let $H(z)$ be non-constant. Now proceeding in the same way as done in the proof of Case 2 of Theorem \ref{t1.4}, we get a contradiction.
\end{proof}

\section {{\bf Some applications}} 
Solving nonlinear differential equations remains one of the most challenging and intellectually stimulating problems in mathematical analysis. In this section, we consider the following non-linear differential equation
\bea\label{all.1} f^{(k+1)}(z)\left(f^{(k)}(z)-f(z)\right)=\varphi(z)(f(z)-a)\left(f^{(k)}(z)-b\right).\eea
where $\varphi(z)$ is an entire function, $a(\neq 0)$ and $b$ are two finite complex numbers.

The general solvability of (\ref{all.1}) is extremely delicate. The equation is nonlinear, involves mixed products of derivatives, and includes an arbitrary entire coefficient function $\varphi(z)$. With no growth restrictions on $\varphi$, it is usually very difficult-even impossible-to explicitly determine whether non-constant entire solutions exist.

Theorems \ref{t1.1}-\ref{t1.5} indicate a deep relationship between the value-sharing behavior of an entire function $f$ and its 
$k$-th derivative $f^{(k)}$ and the analytic structure of the nonlinear differential Eq. (\ref{all.1}). In particular, the assumption that $f$ and $f^{(k)}$ share two distinct values, $a$ and $b$, forces strong functional constraints that can be encoded in equation (\ref{all.1}). In this sense, (\ref{all.1}) serves as a bridge between Nevanlinna-type value-distribution properties and differential equations of higher order.
Equation (\ref{all.1}) is in general highly non-linear and difficult to analyze for arbitrary entire $\varphi$. However, in certain special cases, Eq. (\ref{all.1}) can be solved completely. Now we state our results. 

\begin{theo}\label{tt1} Let $\varphi(z)$ be an entire function, $k$ be a positive integer and let $a$ and $b$ be two non-zero constants such that $a\neq b$. Let $f(z)$ be a meromorphic solution of the differential equation (\ref{all.1}) such that either $f(z)-a$ has no zeros or all the zeros of $f(z)-a$ have multiplicity at least $k$ and $f(z)=a\mapsto f^{(k)}(z)=a$. 
Then the conclusions of Theorem \ref{t1.3} hold.
\end{theo}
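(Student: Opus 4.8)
The plan is to show that any meromorphic solution $f$ of (\ref{all.1}) satisfying the stated hypotheses automatically fulfils every hypothesis of Theorem \ref{t1.3}, after which the conclusion is immediate. The key observation is that (\ref{all.1}) asserts precisely that the auxiliary function of (\ref{sm.1}) coincides with the given entire function $\varphi$; consequently the only facts still to be extracted are that $f$ is a non-constant entire function and that $f^{(k)}(z)=b\Rightarrow f(z)=b$. One may assume $f$ is non-constant, for a constant solution of (\ref{all.1}) with $b\ne0$ would force $f\equiv a$, contradicting $f(z)=a\mapsto f^{(k)}(z)=a$ since then $f^{(k)}\equiv0\ne a$.

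First I would rule out poles. If $z_0$ were a pole of $f$ of order $p\ge1$, then $f^{(k)}$ and $f^{(k+1)}$ would have poles at $z_0$ of orders $p+k$ and $p+k+1$, so the left-hand side of (\ref{all.1}) would have a pole of order $2p+2k+1$ at $z_0$; since $\varphi$ is entire, the right-hand side would have a pole of order at most $2p+k<2p+2k+1$, a contradiction. Hence $f$ is entire.

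The heart of the argument is to derive $f^{(k)}(z)=b\Rightarrow f(z)=b$ from (\ref{all.1}). Let $z_0$ satisfy $f^{(k)}(z_0)=b$. If $f(z_0)=a$, then $z_0$ is a zero of $f-a$, hence of multiplicity at least $k$, and by $f(z)=a\mapsto f^{(k)}(z)=a$ it is also a zero of $f^{(k)}-a$, so $f^{(k)}(z_0)=a=b$, contradicting $a\ne b$; thus $f(z_0)\ne a$, so $\varphi$ is holomorphic at $z_0$ while $f-a$ is holomorphic and non-vanishing there. If $f\equiv f^{(k)}$ the implication is trivial, so assume $f^{(k)}-f\not\equiv0$; one checks that then $f^{(k)}-b\not\equiv0$ (otherwise $f-a$ would be a non-constant polynomial all of whose zeros, of multiplicity $\ge k$, must be zeros of $f^{(k)}-a\equiv b-a\ne0$) and $f^{(k+1)}\not\equiv0$ (since $f^{(k)}$ is non-constant, attaining the value $b$ at $z_0$ while $f^{(k)}\not\equiv b$), so neither side of (\ref{all.1}) is identically zero. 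Writing $m=\mathrm{ord}_{z_0}(f^{(k)}-b)\ge1$, so that $\mathrm{ord}_{z_0}(f^{(k+1)})=m-1$, and comparing orders of vanishing at $z_0$ on the two sides of (\ref{all.1}) yields
\[
(m-1)+\mathrm{ord}_{z_0}\!\bigl(f^{(k)}-f\bigr)=\mathrm{ord}_{z_0}(\varphi)+m\ \ge\ m,
\]
hence $\mathrm{ord}_{z_0}(f^{(k)}-f)\ge1$, i.e. $f(z_0)=f^{(k)}(z_0)=b$.

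With these two steps in hand, $f$ is a non-constant entire function for which either $f-a$ has no zeros or all zeros of $f-a$ have multiplicity at least $k$, $f(z)=a\mapsto f^{(k)}(z)=a$, and $f^{(k)}(z)=b\Rightarrow f(z)=b$, with $b\ne0,a$ --- exactly the hypotheses of Theorem \ref{t1.3}, whose conclusions therefore hold. I expect the main difficulty to lie in the second step: one must ensure that the order-counting in (\ref{all.1}) is legitimate, that is, that none of $f-a$, $f^{(k)}-b$, $f^{(k)}-f$ vanishes identically in a way that spoils the comparison, each such degenerate case being ruled out separately by means of $a\ne b$, $b\ne0$, and the multiplicity hypothesis.
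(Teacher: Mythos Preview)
Your reduction to Theorem~\ref{t1.3} is correct and considerably more economical than the paper's own proof. The paper does not invoke Theorem~\ref{t1.3} at all; instead it splits into $\varphi\equiv0$ versus $\varphi\not\equiv0$, and in the non-trivial case reproves from scratch that $\rho(f)\le1$, that $\varphi$ is constant, and then launches a separate analysis for $k\ge2$ involving the factorisation $f-a=g^k$, explicit expansions of $f^{(k)}$ and $f^{(k+1)}$ in terms of $g$ and its derivatives, and a further auxiliary function $H_1=g^{(2)}/g$ leading to exponential solutions that are finally excluded by coefficient comparison. Your order-of-vanishing argument at a $b$-point of $f^{(k)}$ extracts the implication $f^{(k)}=b\Rightarrow f=b$ directly from (\ref{all.1}), after which all of that machinery is subsumed by the already-proved Theorem~\ref{t1.3}. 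The gain of the paper's route is only that it makes the proof of Theorem~\ref{tt1} logically independent of the full proof of Theorem~\ref{t1.3}; your route, by contrast, exposes Theorem~\ref{tt1} as essentially a corollary.

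One minor remark: your exclusion of constant solutions (``would force $f\equiv a$'') tacitly assumes $\varphi\not\equiv0$; if $\varphi\equiv0$ then any constant is a solution. This is harmless since the conclusions of Theorem~\ref{t1.3} presuppose non-constancy anyway, but you may wish to state explicitly that constant solutions are discarded as trivial.
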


\begin{theo}\label{tt2} Let $\varphi(z)$ be an entire function, $k$ be a positive integer and let $a$ be a non-zero constant. Let $f$ be a meromorphic solution of the following equation 
\bea\label{all.2} f^{(k+1)}(z)\left(f^{(k)}(z)-f(z)\right)=\varphi(z) (f(z)-a)f^{(k)}(z)\eea
such that either $f(z)$ has no zeros or all the zeros of $f(z)$ have multiplicity at least $k$, $f(z)=a\Rightarrow f^{(k)}(z)=a$ and $\lambda(f)<\rho(f)$. Then $f(z)=A\exp(\lambda z)$, where $A$ and $\lambda$ are non-zero constants such that $\lambda^k=1$.
\end{theo}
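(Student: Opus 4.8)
The plan is to reduce Theorem \ref{tt2} to Theorem \ref{t1.5}. Equation (\ref{all.2}) is exactly equation (\ref{all.1}) in the case $b=0$, with $\varphi$ playing the role of the auxiliary quotient (\ref{sm.1}); so what is needed is to extract from the differential equation the one value-sharing hypothesis of Theorem \ref{t1.5} that is not assumed outright, namely $f^{(k)}(z)=0\Rightarrow f(z)=0$, and then quote that theorem, exactly in parallel with the way Theorem \ref{tt1} is deduced from Theorem \ref{t1.3}.

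First I would check that $f$ is a transcendental entire function. Since $\lambda(f)\geq 0$, the hypothesis $\lambda(f)<\rho(f)$ forces $\rho(f)>0$, so $f$ is transcendental (in particular non-constant and non-rational). To exclude poles, suppose $z_0$ were a pole of $f$ of order $p\geq 1$. Then $f^{(k)}-f$ has a pole of order exactly $p+k$ at $z_0$ (no cancellation of leading terms is possible since $p+k>p$), and $f^{(k+1)}$ has a pole of order exactly $p+k+1$, so the left-hand side of (\ref{all.2}) has a pole of order $2p+2k+1$ at $z_0$. On the other hand $\varphi$ is entire, so the right-hand side has a pole at $z_0$ of order at most $p+(p+k)=2p+k$ (and no pole at all if the right-hand side vanishes identically). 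Since $k\geq 1$, these orders cannot agree, a contradiction; hence $f$ is entire, and therefore transcendental entire. In particular $f^{(k)}\not\equiv 0$ and $f^{(k+1)}\not\equiv 0$.

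Next I would show that $f^{(k)}(z)=0\Rightarrow f(z)=0$. If $f^{(k)}\equiv f$ this is immediate, so assume $f^{(k)}\not\equiv f$ and suppose, toward a contradiction, that $z_0$ is a zero of $f^{(k)}$ of multiplicity $m\geq 1$ with $f(z_0)\neq 0$. Then $f^{(k+1)}$ vanishes to order exactly $m-1$ at $z_0$, while $(f^{(k)}-f)(z_0)=-f(z_0)\neq 0$, so the left-hand side of (\ref{all.2}) vanishes to order exactly $m-1$ at $z_0$. However, on the right-hand side $\varphi$ and $f-a$ are holomorphic at $z_0$ while $f^{(k)}$ vanishes there to order $m$, so the right-hand side vanishes to order at least $m$ at $z_0$. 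Comparing the two orders forces $m-1\geq m$, which is absurd; hence $f(z_0)=0$, which proves the implication.

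With this in hand, all the hypotheses of Theorem \ref{t1.5} are verified: $f$ is a non-constant entire function, either $f$ has no zeros or all of its zeros have multiplicity at least $k$, $\lambda(f)<\rho(f)$, $f(z)=a\Rightarrow f^{(k)}(z)=a$, $f^{(k)}(z)=0\Rightarrow f(z)=0$, and $a\neq 0$; Theorem \ref{t1.5} then yields $f(z)=A\exp(\lambda z)$ with $A,\lambda$ non-zero and $\lambda^{k}=1$. I expect the only genuinely delicate points to be the bookkeeping of \emph{exact} (as opposed to merely lower-bound) vanishing and pole orders in the two comparisons above, together with the harmless but necessary separate treatment of the case $f^{(k)}\equiv f$; once these are handled, the conclusion is a direct appeal to Theorem \ref{t1.5}.
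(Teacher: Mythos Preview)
Your proposal is correct and follows essentially the same line as the paper: both arguments show $f$ is entire, extract the missing implication $f^{(k)}(z)=0\Rightarrow f(z)=0$ from equation (\ref{all.2}) by comparing vanishing orders on the two sides, and then feed into the framework of Theorem \ref{t1.5}. The only difference is packaging: you invoke Theorem \ref{t1.5} directly once its hypotheses are verified, whereas the paper splits into the cases $\varphi\equiv 0$ and $\varphi\not\equiv 0$ and, in the latter, partially re-derives that theorem's proof via the auxiliary function $H=f/f^{(k)}$; your route is the more economical of the two.
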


\begin{proof}[{\bf Proof of Theorem \ref{tt1}}] 
Let $f(z)$ be a meromorphic solution of the non-linear differential equation (\ref{all.1}) such that either $f(z)-a$ has no zeros or all the zeros of $f(z)-a$ have multiplicity at least $k$ and $f(z)=a\mapsto f^{(k)}(z)=a$.
Now we consider the following two cases.\par

\medskip
{\bf Case 1.} Let $\varphi\not\equiv 0$. Then $f(z)\not\equiv f^{(k)}(z)$ and $f^{(k+1)}(z)\not\equiv 0$. 
Since $\varphi(z)$ is an entire function, from (\ref{all.1}), it is clear that $f(z)$ is a non-constant entire function. 

Now we prove that $f^{(k)}(z)=b\Rightarrow f(z)=b$. If $b$ is a Picard exceptional value of $f^{(k)}(z)$, then obviously $f^{(k)}(z)=b\Rightarrow f(z)=b$. Next we suppose that $b$ is not a Picard exceptional value of $f^{(k)}(z)$.
Let $z_1$ be a zero of $f^{(k)}(z)-b$ of multiplicity $p_1$. Then from (\ref{all.1}), we see that $z_1$ must be a zero of $f^{(k)}(z)-f(z)$. Since 
\begin{align*}
f^{(k)}(z)-f(z)=f^{(k)}(z)-b-(f(z)-b),
\end{align*}
it follows that $z_1$ is a zero of $f(z)-b$. So $f^{(k)}(z)=b\Rightarrow  f(z)=b$. Now from the proof of Theorem \ref{t1.1}, we can prove that $\rho(f)\leq 1$. Therefore using Lemma \ref{LL.1} to (\ref{all.1}), we obtain that $\varphi$ is a constant. So, we arrive at (\ref{sm.2}). We consider the following two sub-cases.\par

\medskip
{\bf Sub-case 1.1.} Let $k=1$. By the given conditions, we have $(i)$ $f(z)=a\Rightarrow f^{(1)}(z)=a$ and $(ii)$ $f^{(1)}(z)=b\Rightarrow f(z)=b$. Consequently by Theorem 1.E, we get the required conclusions.\par

\medskip
{\bf Sub-case 1.2.} Let $k\geq 2$. Note that $f(z)=a\mapsto f^{(k)}(z)=a$. If all the zeros of $f(z)-a$ have multiplicity at least $k$, then obviously all the zeros of $f(z)-a$ have multiplicity exactly $k$.
Now we consider the following two sub-cases.\par

\medskip
{\bf Sub-case 1.2.1.} Suppose $a$ is not a Picard exceptional value of $f(z)$. Let $z_0$ be a zero of $f(z)-a$ of multiplicity $k\geq 2$. Since $f(z)=a\mapsto f^{(k)}=a$, $z_0$ is also a zero of $f^{(k)}(z)-a$ of multiplicity $q_0$. First we suppose that $q_0\geq k\geq 2$. Clearly $z_0$ is a zero of $f^{(k)}(z)-f(z)$ and $f^{(k+1)}(z)$ of multiplicities $k$ (at least) and $q_0-1\geq 1$ respectively. Therefore from (\ref{sm.2}), we get a contradiction. Hence $q_0<k$ and so from (\ref{sm.2}), we get $f^{(k+1)}(z_0)=0$. Consequently 
\begin{align}
\label{kkk.1}f(z)=a\Rightarrow f^{(k+1)}(z)=0.
\end{align}

Since all the zeros of $f(z)-a$ have multiplicity exactly $k$, we may take 
\begin{align}
\label{kkk.1a} f(z)-a=g^k(z),
\end{align}
where $g(z)$ is a non-constant entire function having only simple zeros. Now from the proof of Theorem 1.1 \cite{Mss}, we can find that
\begin{align}
\label{kkk.2} f^{(k)}(z)
&= k!(g^{(1)}(z))^{k}+\frac{k(k-1)}{2}k!g(z)(g^{(1)}(z))^{k-2}g^{(2)}(z)+\ldots +kg^{k-1}(z)g^{(k)}(z)\\&=
k!(g^{(1)}(z))^{k}+\frac{k(k-1)}{2}k!g(z)(g^{(1)}(z))^{k-2}g^{(2)}(z)+ R_{1}(g(z)),\nonumber
\end{align}
where $R_{1}(g(z))$ is a differential polynomial in $g(z)$ with constant coefficients and each term of $R_{1}(g(z))$ contains $g^{m}(z) (2 \leq m \leq k-1$) as a factor.

Differentiating (\ref{kkk.2}) once, we get 
\begin{align}
\label{kkk.3} f^{(k+1)}(z)= \frac{k(k+1)}{2}k!(g^{(1)}(z))^{k-1}g^{(2)}(z)+S_{1}(g(z)),
\end{align} 
where $S_{1}(g(z))$ is a differential polynomial in $g(z)$ and each term of $S_{1}(g(z))$ contains $g(z)$ and its higher powers as a factor.

Let $z_0$ be a zero of $f(z)-a$. Then from (\ref{kkk.1}), we get $f^{(k+1)}(z_0)=0$. Also from from (\ref{kkk.1a}), we see that $z_0$ is a simple zero of $g(z)$, i.e., $g(z_0)=0$, but $g^{(1)}(z_0)\neq 0$. Therefore from (\ref{kkk.3}), we have $g^{(2)}(z_0)=0$ and so $g(z)=0\Rightarrow g^{(2)}(z)=0$.
Let 
\begin{align}
\label{kkk.4} H_1(z)=\frac{g^{(2)}(z)}{g(z)}.
\end{align}

If possible, suppose that $H_1(z)\equiv 0$. Then from (\ref{kkk.4}), we have $g^{(2)}(z)\equiv 0$ and so $g(z)=a_1z+a_0$, where $a_1(\neq 0), a_0\in\mathbb{C}$. Clearly (\ref{kkk.1a}) gives $f(z)-a=(a_1z+a_0)^k$ and so $f^{(k+1)}(z)\equiv 0$, which is impossible. Hence $H_1(z)\not\equiv 0$. Obviously $H_1(z)$ is an entire function. Now using Lemma \ref{LL.1} to (\ref{kkk.4}), we obtain that $H_1(z)$ is a non-zero constant. Let $H_1(z)=\tilde{\lambda}$.
Then from (\ref{kkk.4}) we have $g^{(2)}(z)=\lambda g(z)$. Solving, we get
\begin{align}
\label{kkk.5} g(z)=A_1\exp\left(\sqrt{\lambda}z\right)+B_1\exp\left(-\sqrt{\lambda}z\right),
\end{align}
where $A_1$ and $B_1$ are non-zero constants. Note that 
\begin{align}
\label{kkk.6} g^k(z)=A_1^{k}\exp\left(k\sqrt{\lambda}z\right)+\ldots+B_1^{k}\exp\left(-k\sqrt{\lambda}z\right).
\end{align}

Therefore
\begin{align}
\label{kkk.7} \left(g^k(z)\right)^{(k)}=A_1^{k}\left(k\sqrt{\lambda}\right)^{k}\exp\left(k\sqrt{\lambda}z\right)+\ldots
+B_1^{k}(-1)^k\left(k\sqrt{\lambda}\right)^{k}\exp\left(-k\sqrt{\lambda}z\right)
\end{align}
and 
\begin{align}
\label{kkk.8} \left(g^k(z)\right)^{(k+1)}=A_1^{k}\left(k\sqrt{\lambda}\right)^{k+1}\exp\left(k\sqrt{\lambda}z\right)+\ldots
+B_1^{k}(-1)^{k+1}\left(k\sqrt{\lambda}\right)^{k+1}\exp\left(-k\sqrt{\lambda}z\right).
\end{align}

Now using (\ref{kkk.1a}), (\ref{kkk.6}), (\ref{kkk.7}) and (\ref{kkk.8}) to (\ref{sm.2}), we get
\begin{align}
\label{kkk.9}&\left(A_1^{k}\left(k\sqrt{\lambda}\right)^{k+1}\exp\left(k\sqrt{\lambda}z\right)+\ldots
+B_1^{k}(-1)^{k+1}\left(k\sqrt{\lambda}\right)^{k+1}\exp\left(-k\sqrt{\lambda}z\right)\right)\times\nonumber\\
&\bigg(A_1^{k}\left(k\sqrt{\lambda}\right)^{k}\exp\left(k\sqrt{\lambda}z\right)+\ldots
+B_1^{k}(-1)^k\left(k\sqrt{\lambda}\right)^{k}\exp\left(-k\sqrt{\lambda}z\right)\nonumber\\
&-\left(A_1^{k}\exp\left(k\sqrt{\lambda}z\right)+\ldots+B_1^{k}\exp\left(-k\sqrt{\lambda}z\right)\right)-a\bigg)\nonumber\\
&=c_1\left(A_1^{k}\exp\left(k\sqrt{\lambda}z\right)+\ldots+B_1^{k}\exp\left(-k\sqrt{\lambda}z\right)\right)\times\nonumber\\
&\left(A_1^{k}\left(k\sqrt{\lambda}\right)^{k}\exp\left(k\sqrt{\lambda}z\right)+\ldots
+B_1^{k}(-1)^k\left(k\sqrt{\lambda}\right)^{k}\exp\left(-k\sqrt{\lambda}z\right)-b\right).
\end{align}

Then from (\ref{kkk.9}), we get 
\begin{align}
\label{kkk.10} k\sqrt{\lambda}\left(\left(k\sqrt{\lambda}\right)^k-1\right)=c_1\;\;\text{and}\;\;
-k\sqrt{\lambda}\left(\left(-k\sqrt{\lambda}\right)^k-1\right)= c_1.
\end{align}

Consequently, we have 
\begin{align}
\label{kkk.11}\left(k\sqrt{\lambda}\right)^k-1 = (-1)^{k+1}\left(k\sqrt{\lambda}\right)^k+1.
\end{align}

If $k$ is odd, then from (\ref{kkk.11}), we get a contradiction. If $k$ is even, then from (\ref{kkk.10}) and (\ref{kkk.11}), we have $c_1=0$, which is impossible.\par

\medskip
{\bf Sub-case 1.2.2.} Suppose $a$ is a Picard exceptional value of $f(z)$. Then proceeding in the same way as done in Sub-case 2.2 in the proof of Theorem \ref{t1.2}, we get $f(z)=d\exp(c z)+a$,
where $c$ and $d$ are non-zero constants such that $c^k=\frac{b}{b-a}\neq 1$.\par

\medskip
{\bf Case 2.} Let $\varphi\equiv 0$. In this case, we follow Case 1 of the proof of Theorem \ref{t1.2} to get the required conclusions.
\end{proof}

\begin{proof}[{\bf Proof of Theorem \ref{tt2}}] 
Let $f(z)$ be a meromorphic solution of the equation (\ref{all.2}) such that all the zeros of $f(z)$ have multiplicity at least $k$, $f(z)=a\Rightarrow f^{(k)}(z)=a$ and $\lambda(f)<\rho(f)$. Clearly $f(z)$ is a transcendental meromorphic function and so $f^{(k)}(z)\not\equiv 0$.

Now we consider the following two cases.\par

\medskip
{\bf Case 1.} Let $\varphi\not\equiv 0$. Then $f(z)\not\equiv f^{(k)}(z)$. Also from (\ref{all.2}), we can conclude that $f(z)$ is a non-constant entire function. Now from the proof of Theorem \ref{t1.5}, we can prove that $\rho(f)\leq 1$.
We prove that $f^{(k)}(z)=0\Rightarrow f(z)=0$. 
If $0$ is a Picard exceptional value of $f^{(k)}(z)$, then $f^{(k)}(z)=0\Rightarrow f(z)=0$. Next let $z_1$ be a zero of $f^{(k)}(z)$ of multiplicity $q_1$. Then from (\ref{all.2}), we see that $z_1$ is a zero of $f^{(k)}(z)-f(z)$ and so
$z_1$ is a zero of $f(z)$. Therefore $f^{(k)}(z)=0\Rightarrow  f(z)=0$. Suppose $z_1$ is also a zero of $f(z)$ of multiplicity $p_1$. Clearly $p_1\geq k$ as the zeros of $f(z)$ have multiplicity at least $k$.
Since $f^{(k)}(z_1)=0$ and $z_1$ is a zero of $f(z)$ of multiplicity $p_1(\geq k)$, it follows that $p_1\geq k+1$. Therefore from (\ref{k2.2}), we deduce that $z_1$ is a zero of $H(z)$ of multiplicity $k$ and so all zeros of $H(z)$ have multiplicity exactly $k$. It is clear that $f(z)=0\Leftrightarrow H(z)=0$ and so $N(r,0;H)\leq N(r,0;f).$

Consequently, from (\ref{k2.1}) and (\ref{k2.2}), we conclude that $H(z)$ is an entire function. If $H(z)$ is a constant, then from the proof of Case 1 of Theorem \ref{t1.5}, we get $f(z)\equiv f^{(k)}(z)$, which is impossible. On the other hand if $H(z)$ is non-constant, then proceeding in the same way as done in the proof of Case 2 of Theorem \ref{t1.4}, we get a contradiction.\par

\medskip
{\bf Case 2.} Let $\varphi\equiv 0$. Since $f(z)$ is transcendental, we have $f^{(k+1)}(z)\not\equiv 0$ and so $f(z)\equiv f^{(k)}(z)$.
Then by Lemma \ref{LL.3}, we get $f(z)=A\exp(\lambda z)$,
where $A$ and $\lambda$ are non-zero constants such that $\lambda^k=1$.
\end{proof}

\vspace{0.1in}
{\bf Compliance of Ethical Standards:}\par

{\bf Conflict of Interest.} The authors declare that there is no conflict of interest regarding the publication of this paper.\par

{\bf Data availability statement.} Data sharing not applicable to this article as no data sets were generated or analysed during the current study.

\end{document}